\documentclass[12pt]{amsart}
\usepackage{enumitem,amssymb,aliascnt,geometry,stmaryrd,mathtools,microtype,xltabular}
\usepackage{tikz-cd}
\usepackage{version}
\usepackage[pdfusetitle]{hyperref}

\hypersetup{ pdfkeywords={Group actions on varieties, essential dimension, Chern numbers},
hidelinks}

\usepackage{etoolbox}
\makeatletter
\pretocmd{\section}{\addtocontents{toc}{\protect\addvspace{6\p@}\bfseries}}{}{}
\pretocmd{\subsection}{\addtocontents{toc}{\protect\normalfont}}{}{}
\makeatother

\DeclareMathOperator{\Spec}{Spec}
\DeclareMathOperator{\Gal}{Gal}
\DeclareMathOperator{\trdeg}{trdeg}
\DeclareMathOperator{\ind}{ind}
\DeclareMathOperator{\ed}{ed}
\DeclareMathOperator{\Hom}{Hom}
\DeclareMathOperator{\End}{End}
\DeclareMathOperator{\Aut}{Aut}

\DeclareMathOperator{\CH}{CH}

\DeclareMathOperator{\Ind}{Ind}

\newcommand{\red}{\mathrm{red}}

\newcommand{\sm}{\mathrm{sm}}
\newcommand{\Cr}{\mathrm{Cr}}
\newcommand{\GL}{\mathrm{GL}}
\newcommand{\id}{\mathrm{id}}
\newcommand{\stacks}[1]{\cite[Tag~\href{https://stacks.math.columbia.edu/tag/#1}{{#1}}]{stacks}}
\newcommand{\tw}[2]{{}^{#2}\!{#1}}

\newcommand{\Oc}{\mathcal{O}}
\newcommand{\Cc}{\mathcal{C}}
\newcommand{\Pc}{\mathcal{P}}
\newcommand{\Zz}{\mathbb{Z}}
\newcommand{\Pp}{\mathbb{P}}
\newcommand{\Nn}{\mathbb{N}}
\newcommand{\Qq}{\mathbb{Q}}
\newcommand{\Fp}{\mathbb{F}_p}
\newcommand{\Tan}{T}

\newtheorem{thm}{Theorem}
\newtheorem{prop}[thm]{Proposition}

\newtheorem*{que*}{Question}

\swapnumbers

\newtheorem{theorem}{Theorem}[section]
\newaliascnt{proposition}{theorem}
\newtheorem{proposition}[proposition]{Proposition}
\newaliascnt{lemma}{theorem}
\newtheorem{lemma}[lemma]{Lemma}
\newaliascnt{corollary}{theorem}
\newtheorem{corollary}[corollary]{Corollary}

\theoremstyle{definition}
\newaliascnt{remark}{theorem}
\newtheorem{remark}[theorem]{Remark}
\newaliascnt{example}{theorem}
\newtheorem{example}[example]{Example}
\newaliascnt{definition}{theorem}
\newtheorem{definition}[definition]{Definition}

\newtheoremstyle{par}
{}
{}
{}
{}
{}
{.}
{ }
{}%
\theoremstyle{par}
\newtheorem{para}[theorem]{}

\numberwithin{equation}{theorem}

\begin{document}

\begin{abstract}
	Let \(X\) be a smooth, projective, geometrically connected variety over a field \(k\) containing a root of unity of order \(p\).
	If \(X\) has a Chern number prime to \(p\), we show that every action of a finite \(p\)-group on \(X\) factors through a subgroup of \(\operatorname{GL}_n(k)\), where \(n=\dim X\).
	This allows one to transfer properties of representations of finite \(p\)-groups to their actions on \(X\).
	We deduce a fixed-point theorem which, unlike previously known results of this kind, is sensitive to the arithmetic of the base field.
	We also obtain a bound on the orders of cyclic \(p\)-subgroups of the Cremona groups: for instance \(\operatorname{Cr}_n(\mathbb{Q})\) contains no element of order \(p^2\) when \(p \ge n+2\).

	The method is based on the following observation, of independent interest.
	For an affine algebraic group \(G\) over a field of characteristic zero, \(\ed_p(G) + \dim G\) is the least dimension of a smooth projective variety \(Y\) with a generically free \(G\)-action such that the degree map \(\operatorname{CH}_G(Y) \to \mathbb{F}_p\) is nonzero.

	A key input for our result is Karpenko and Merkurjev's computation of the essential \(p\)-dimension of \(p\)-groups.
\end{abstract}

\author{Olivier Haution}
\title{Essential \texorpdfstring{\(p\)}{p}-dimension and Chern numbers}
\email{olivier.haution at unimib.it}
\address{Dipartimento di Matematica e Applicazioni, Università degli Studi di Milano-Bicocca, via Roberto Cozzi 55, 20125 Milano, Italy}

\subjclass[2020]{14E07, 14C25, 14L30, 20G15}

\keywords{Group actions on varieties, essential dimension, Chern numbers}
\date{\today}

\maketitle

\numberwithin{theorem}{section}
\numberwithin{lemma}{section}
\numberwithin{proposition}{section}
\numberwithin{corollary}{section}
\numberwithin{example}{section}
\numberwithin{definition}{section}
\numberwithin{remark}{section}

\section*{Introduction}
Consider a finite \(p\)-group \(G\) acting on a smooth, projective, geometrically connected variety \(X\), over a field \(k\) of characteristic different from \(p\).
If \(G\) has a rational fixed point on \(X\), the tangent space at that point is a \(G\)-representation of dimension \(\dim X\), which turns out to be faithful if the action on \(X\) is faithful, as one sees by linearizing the action at the fixed point.

Our main result reaches the same conclusion from a hypothesis of a completely different nature, involving the Chern numbers of \(X\).
These are the integers obtained as the degrees of polynomials in the Chern classes of the tangent bundle of \(X\).
\begin{thm}[see (\ref{cor:quotient})]
	\label{thm_main}
	Assume that \(k\) contains a root of unity of order \(p\), and that \(X\) has a Chern number prime to \(p\).
	Then \(G\) acts on \(X\) through a subgroup of \(\GL_n(k)\), where \(n=\dim X\).
\end{thm}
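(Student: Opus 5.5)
Replacing \(G\) by its image in \(\Aut(X)\), I may assume the action is faithful, and I will show that \(G\) then embeds into \(\GL_n(k)\). By the Karpenko--Merkurjev theorem, for a finite \(p\)-group \(G\) over a field containing a primitive \(p\)-th root of unity, \(\ed_p(G)=\ed(G)\) equals the least dimension of a faithful representation of \(G\) over \(k\). So it suffices to prove \(\ed_p(G)\le n\). For this I use the characterization from the abstract: \(\ed_p(G)+\dim G\), which is just \(\ed_p(G)\) here since \(G\) is finite, is the least dimension of a smooth projective variety \(Y\) with a generically free \(G\)-action such that the degree map \(\CH_G(Y)\to\Fp\) is nonzero. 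Since \(G\) is finite and acts faithfully on the integral variety \(X\), the action is generically free. It therefore remains to show that \(\deg\colon \CH_G(X)\to \Fp\) is nonzero; then \(\ed_p(G)\le \dim X=n\).

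This is where the Chern number hypothesis enters. The tangent bundle \(\Tan_X\) is \(G\)-equivariant, so it has equivariant Chern classes \(c_i^G(\Tan_X)\in \CH_G(X)\). Take the polynomial \(P\) in the Chern classes whose degree is prime to \(p\), and form \(P(c^G(\Tan_X))\in\CH_G(X)\). I interpret the degree map \(\CH_G(X)\to\Fp\) as pushforward to \(\CH_G(\Spec k)\) modulo \(p\), followed by projection onto the degree-zero part, or equivalently restriction along \(\Spec k\to BG\). It commutes with forgetting the equivariance. Hence its value on the top-degree component of \(P(c^G(\Tan_X))\) is the ordinary Chern number \(\deg P(c(\Tan_X))\bmod p\), which is nonzero.

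The main obstacle is getting the first step right, namely the exact form of the characterization of \(\ed_p\) and the definition of the degree map, especially the direction that a suitable \(Y\) bounds \(\ed_p\) from above. If I had to prove that direction, I would argue as follows. Nonvanishing of the degree gives, for the versal torsor \(U\to U/G\) over the generic point \(K\) of a classifying space, a zero-cycle of prime-to-\(p\) degree on the twisted form \({}^{U}\!X\) over \(K\). Hence after a prime-to-\(p\) extension \(L/K\), the twist has an \(L\)-point. The corresponding \(G\)-equivariant rational map \(U_L\dashrightarrow X\) then shows that the versal torsor over \(L\) descends to a \(G\)-invariant subvariety of \(X\), of dimension at most \(n\). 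This yields \(\ed_p(G)\le n\).
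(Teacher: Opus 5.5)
Your overall route is the paper's: pass to the faithful quotient, which acts generically freely; use equivariant Chern classes to see that \(\deg\colon\CH_G(X)\to\Fp\) is nonzero; deduce \(\ed_p\le n\); and conclude with Karpenko--Merkurjev, as in (\ref{th:quotient}) and (\ref{cor:quotient}). In characteristic zero, if you take the characterization from the abstract as known, this is complete. However, the statement only requires \(\zeta_p\in k\), so \(k\) may have positive characteristic \(\ne p\), and the abstract states the characterization only in characteristic zero. (Characteristic zero is needed only for the existence direction, which uses resolution of singularities.) The direction you need does hold in general, but the argument you sketch for it has a genuine gap.

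Nonvanishing of the degree only gives what the paper calls weak \(p\)-versality: a point of prime-to-\(p\) degree on \(\tw{X}{T}\), i.e.\ a \(G\)-equivariant morphism \(T_L\to X\). Nothing in your sketch forces this morphism to land in the open subscheme \(U\subset X\) on which \(G\) acts freely. If it lands in the non-free locus, it says nothing about \(\ed(T_L)\). For instance, a \(G\)-fixed rational point of \(X\) yields a rational point on every twist, and the torsor certainly does not ``descend'' to it. Your sketch never uses smoothness of \(X\), and without smoothness the conclusion is false. Take a smooth projective curve with a faithful \((\Zz/p)^r\)-action (over an algebraically closed field, say) and pinch a free orbit to a single rational point. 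The result is a projective curve with a generically free action whose degree map is onto \(\Zz\), yet \(\ed_p((\Zz/p)^r)=r>1\) for \(r\ge 2\).

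The missing ingredient is (\ref{prop:versal_weak}). Since \(\tw{X}{T}\) is smooth, its index equals that of its dense open subscheme \(\tw{U}{T}\) (Gabber--Liu--Lorenzini). So there is a prime-to-\(p\) extension \(L\) and an equivariant morphism \(T_L\to U\). By (\ref{p:torsor_pb}), \(T_L\) is then the pullback of the torsor \(U\to U/G\) along a point of \(U/G\), which gives \(\ed_p(G)\le\dim U/G=n\) as in (\ref{prop:ed_versal}). With this step, which is characteristic-free, your proof goes through.
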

The Chern numbers are often amenable to computation.
Since they are independent of the action, the theorem constrains all actions of \(p\)-groups on the variety \(X\).

In a range of dimensions which we determine in \S\ref{sect:construction}, the hypotheses of the theorem do not imply the existence of a fixed point;
in the complementary range they do, but only as a consequence of the theorem (Theorem~\ref{intro:main} below).

Under the assumption that \(X\) has a Chern number prime to \(p\), the theorem allows us to transfer certain properties of \(G\)-representations to the \(G\)-action on \(X\).
For instance, representations of \(G\) are abelian, in the sense that they factor through the abelianization of \(G\), when they have dimension \(<p\);
we obtain that the \(G\)-action on \(X\) is commutative when \(\dim X <p\).
More generally \(G\)-representations of dimension \(<p^m\) factor through a quotient of derived length \(\le m\), and so the same is true for the \(G\)-action on \(X\) when \(\dim X <p^m\) (see (\ref{prop:derived})).

For an example of an arithmetic flavor, fix for each \(m\) a primitive \(p^m\)-th root of unity \(\zeta_{p^m}\) in an algebraic closure of \(k\).
We obtain that the \(G\)-action on \(X\) factors through a quotient of exponent dividing \(p^{s-1}\) when \(\dim X <[k(\zeta_{p^s}):k(\zeta_p)]\) (see (\ref{p:exponent})).
This yields the following property of the Cremona group \(\Cr_n(k)\) of the birational automorphisms of \(\Pp^n\).
\begin{prop}[=(\ref{prop:Cremona})]
	Let \(k\) be a field of characteristic zero and \(s\in \Nn\) such that \(\zeta_{p^s} \not \in k(\zeta_p)\).
	If \(n< p-1\), then \(\Cr_n(k)\) contains no element of order \(p^s\).
\end{prop}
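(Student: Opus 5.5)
Suppose for contradiction that \(\Cr_n(k)\) contains an element \(\sigma\) of order \(p^s\), and let \(G=\langle\sigma\rangle\cong \Zz/p^s\). The plan is to turn \(\sigma\) into a faithful regular action of \(G\) on a smooth projective variety with a Chern number prime to \(p\), and then apply the exponent statement (\ref{p:exponent}).

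First, build the regular model. A finite group acting by birational automorphisms of \(\Pp^n\) can be regularized on some quasi-projective variety birational to \(\Pp^n\). Compactify it equivariantly, then apply equivariant resolution of singularities, which is available since \(\operatorname{char} k=0\). The result is a smooth, projective, geometrically connected variety \(X\), birational to \(\Pp^n\), on which \(G\) acts faithfully by regular automorphisms.

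Second, show that \(X\) has a Chern number prime to \(p\). The natural candidate is the Todd genus, i.e. the degree of the Todd class of \(X\). It is a birational invariant of smooth projective varieties in characteristic zero: it equals \(\chi(X,\Oc_X)=\sum_i(-1)^i\dim H^i(X,\Oc_X)\), and each \(H^i(X,\Oc_X)\) is a birational invariant. Hence \(\operatorname{Td}(X)=\operatorname{Td}(\Pp^n)=1\). The Todd genus is a polynomial in the Chern classes with rational coefficients, so some care is needed. Since \(n<p-1\), the denominators of the degree-\(n\) Todd polynomial involve only primes \(\le n+1<p\). Clearing them gives an integer \(N\) prime to \(p\) with \(N\cdot\operatorname{Td}(X)=N\) equal to an integral combination of Chern numbers of \(X\). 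Therefore some Chern number of \(X\) is prime to \(p\).

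Third, reduce to the case where \(k\) contains \(\zeta_p\). Replace \(k\) by \(k'=k(\zeta_p)\). Then \([k':k]\) divides \(p-1\), so it is prime to \(p\). Chern numbers and faithfulness of the action are unchanged by this base change, \(X_{k'}\) is still geometrically connected, and \(\zeta_{p^s}\notin k'\) still holds. Now apply (\ref{p:exponent}) over \(k'\), which requires
\[ \dim X=n<[k'(\zeta_{p^s}):k'(\zeta_p)] = [k(\zeta_{p^s}):k(\zeta_p)]. \]
The extension \(k(\zeta_{p^s})/k(\zeta_p)\) is nontrivial by hypothesis. Its Galois group is a subgroup of \(\Gal(\Qq(\zeta_{p^s})/\Qq(\zeta_p))\), a \(p\)-group, because the Galois group over \(k(\zeta_p)\) embeds in \((\Zz/p^s)^\times\) and acts trivially on \(\zeta_p\). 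So its degree is a positive power of \(p\), hence \(\ge p>n\). Conclusion (\ref{p:exponent}) then says the \(G\)-action factors through a quotient of exponent dividing \(p^{s-1}\). This contradicts faithfulness, since \(\sigma\) has order \(p^s\).

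The main obstacle is the Chern number step: checking precisely that the denominators of the Todd polynomial in dimension \(n\) are products of primes \(\le n+1\). This is a classical fact, since the denominators of Todd polynomials divide products of powers of primes \(q\le n+1\). It is exactly what forces the hypothesis \(n<p-1\), i.e. \(n+1<p\). The regularization and resolution step is standard.
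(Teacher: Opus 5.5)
Your proof is correct and takes essentially the paper's route. The paper deduces this from Proposition~(\ref{prop:birat}), which likewise regularizes the action on a smooth projective model and uses \(\chi(\Oc)=1\) together with \(\dim < p-1\) to get a Chern number prime to \(p\) (via the Todd-denominator fact you single out). It then applies (\ref{prop:d_p_constant}), noting that \([k(\zeta_{p^m}):k(\zeta_p)]\) is a power of \(p\), which for a cyclic group is exactly (\ref{p:exponent}). Your preliminary base change to \(k(\zeta_p)\) is harmless but unnecessary, since (\ref{prop:d_p_constant}) already performs that extension internally.
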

For instance \(\Cr_n(\Qq)\) contains no element of order \(p^2\) when \(n<p-1\).
The mechanism here is based on the classical observation that in dimension \(<p-1\) birational information, namely rational connectedness, produces a Chern number prime to \(p\).

Theorem~\ref{thm_main} also provides upper bounds on the order of \(p\)-groups acting faithfully on \(X\), when the finite \(p\)-subgroups of \(\GL_n(k)\) have bounded order (for instance when \(k\) is finitely generated over its prime field, see (\ref{ex:Minkowski})).\\

We explore in detail the obstruction to commutative actions provided by Theorem~\ref{thm_main}, mostly because of the relation with fixed-point theorems as we will see below.
It turns out that a better bound than \(\dim X <p\) for imposing commutative actions is available if we restrict ourselves to actions of a particular group \(G\).
For instance, when \(G\) is extraspecial of order \(p^{1+2n}\), all \(G\)-representations of dimension \(<p^n\) are abelian, and as above this property transfers to the \(G\)-action on \(X\).
We assume from now on that \(\zeta_p\in k\), and introduce the following invariant (see (\ref{def:d_p}) and (\ref{prop:d_p}))
\[
	d_p(G) = \min \{\dim_k V, \text{ for \(V\) a nonabelian \(G\)-representation over \(k\)}\} \in \Nn \cup \{\infty\}.
\]
The invariant \(d_p(G)\) takes the value \(\infty\) when \(G\) is abelian, and otherwise it is always a power of \(p\).
It can decrease upon extending the field \(k\).
We obtain:
\begin{prop}[=(\ref{prop:d_p_constant})]
	Assume that \(\zeta_p \in k\).
	Let \(X\) be a smooth, projective, geometrically connected \(k\)-variety with a \(G\)-action.
	Assume that some Chern number of \(X\) is prime to \(p\).
	If \(\dim X <d_p(G)\), then the \(G\)-action on \(X\) is commutative.
	More precisely it factors through the group \((\Zz/p^{n_1}) \times\ldots \times (\Zz/p^{n_r})\), with \(n_1, \ldots,n_r \ge 1\) and
	\[
		[k(\zeta_{p^{n_1}}):k] + \ldots + [k(\zeta_{p^{n_r}}):k] \le \dim X.
	\]
\end{prop}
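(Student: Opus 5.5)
The plan is to reduce everything to linear algebra via Theorem~\ref{thm_main}. By that theorem, the \(G\)-action on \(X\) factors through a subgroup \(H\subset\GL_n(k)\), \(n=\dim X\). Concretely, there is a faithful representation \(V\) of \(H=G/N\) of dimension \(n\), where \(N\) is the kernel of the action on \(X\). Viewed as a \(G\)-representation by inflation, \(V\) has dimension \(n<d_p(G)\). By definition of \(d_p(G)\), \(V\) is therefore abelian, i.e.\ it factors through the abelianization \(G^{\mathrm{ab}}\). Since \(V\) is faithful as an \(H\)-representation, \(H\) must then be abelian: the commutator subgroup \([G,G]\) acts trivially on \(V\), hence maps to the identity in \(H\). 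This gives commutativity of the action on \(X\).

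For the more precise statement, I would analyse faithful representations of the finite abelian \(p\)-group \(H\) over \(k\), using that \(\operatorname{char} k\neq p\) and \(\zeta_p\in k\). The first step is to note that \(V\) is semisimple by Maschke, so \(V=\bigoplus_i W_i\) with each \(W_i\) irreducible. Each irreducible representation of an abelian group over \(k\) factors through a cyclic quotient \(H\to \Zz/p^{m}\), acting via a primitive \(p^m\)-th root of unity. Over \(k\) such an irreducible \(W\) is \(k(\zeta_{p^m})\) with the generator acting by multiplication by \(\zeta_{p^m}\). Its dimension is therefore \([k(\zeta_{p^m}):k]\); the trivial representation corresponds to \(m=0\). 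I would verify this by observing that the image of \(k[H]\) in \(\End(W)\) is a commutative field generated by the images of the elements of \(H\), hence by roots of unity. The image of \(H\) is then a finite subgroup of the multiplicative group of a field, hence cyclic, say of order \(p^{m}\), generating \(k(\zeta_{p^m})\).

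Next, discard the trivial summands and let \(\chi_1,\dots,\chi_r\) be the surjections \(H\to \Zz/p^{n_i}\), \(n_i\ge1\), attached to the remaining summands. Faithfulness of \(V\) means that \(\bigcap \ker\chi_i\) is trivial, so \(H\) embeds into \(\prod_i \Zz/p^{n_i}\). The action thus factors through a subgroup of \((\Zz/p^{n_1})\times\dots\times(\Zz/p^{n_r})\) with \(\sum_i [k(\zeta_{p^{n_i}}):k]\le \dim V=\dim X\). To get a genuine factorization through the product itself, as stated, I would choose the decomposition of \(H\) differently. Write \(H\cong\prod_j \Zz/p^{a_j}\) abstractly. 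The subgroup \(H\) of the product then has invariant factors dominated by those of the product, so it is itself isomorphic to a product \(\prod_j \Zz/p^{a_j}\). After pruning, its exponents satisfy \(\sum_j [k(\zeta_{p^{a_j}}):k]\le \sum_i[k(\zeta_{p^{n_i}}):k]\). This uses monotonicity of \(m\mapsto[k(\zeta_{p^m}):k]\) together with a standard matching: a subgroup of \(\prod_i \Zz/p^{n_i}\) has at most \(r\) cyclic factors, and its \(j\)-th largest exponent is at most the \(j\)-th largest \(n_i\). The action then factors through \(H\cong\prod_j\Zz/p^{a_j}\), which has the required form.

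The main obstacle I expect is this last combinatorial point about subgroups of finite abelian \(p\)-groups, which I would prove using the counts \(\dim_{\Fp} p^{t}H[p]\). Apart from that, the argument is routine once Theorem~\ref{thm_main} is available.
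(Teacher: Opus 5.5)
Your proof is correct, but it takes a different route from the paper's.

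\textbf{The paper's route.} The argument stays at the level of essential \(p\)-dimension. By (\ref{th:fpt}), that is, (\ref{th:quotient}) combined with the definition (\ref{def:d_p}) of \(d_p\) via \(\ed_p\), the action factors through an abelian quotient \(Q\simeq(\Zz/p^{n_1})\times\cdots\times(\Zz/p^{n_r})\) with \(\ed_p(Q)\le\dim X\). The inequality is then read off from Karpenko--Merkurjev's formula \(\ed_p(Q)=\sum_i[k(\zeta_{p^{n_i}}):k(\zeta_p)]\) \cite[Corollary~5.2]{KM-Ess}.

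\textbf{Your route.} You proceed in three steps.
\begin{enumerate}
\item You pass through the linearization of Theorem~\ref{thm_main}, so Karpenko--Merkurjev's main theorem enters there.
\item You use the representation-theoretic description of \(d_p(G)\) to get commutativity. In the body of the paper this is not the definition but (\ref{prop:d_p}), which again rests on \cite{KM-Ess}.
\item You prove by hand that a faithful \(k\)-representation of an abelian \(p\)-group of type \((a_j)\) has dimension at least \(\sum_j[k(\zeta_{p^{a_j}}):k]\). This uses the decomposition into irreducible summands of the form \(k(\zeta_{p^m})\), together with the dominance of types for subgroups of finite abelian \(p\)-groups.
\end{enumerate}
For step 3, your count should read \(\dim_{\Fp}(p^tH)[p]=\#\{j : a_j>t\}\). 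This quantity is monotone under inclusion of subgroups, and that is exactly what the dominance argument needs.

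\textbf{Comparison.} The inequality in step 3 is precisely the elementary lower-bound half of \cite[Corollary~5.2]{KM-Ess}. So your argument is more self-contained at the final step and makes visible where the cyclotomic degrees come from. The paper's argument is instead a two-line citation. The deep input, that \(\ed_p\) of a \(p\)-group equals the minimal dimension of a faithful representation, is used in both.
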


It was observed in \cite[(1.1.2.i)]{fpt} that actions of abelian \(p\)-groups have fixed points (over some extension of the base field \(k\)).
This yields the following fixed-point theorem:
\begin{thm}[see (\ref{cor:fpt})]
	\label{intro:main}
	Assume that \(X\) has a Chern number prime to \(p\).
	If \(\dim X <d_p(G)\), then \(X^G\ne \varnothing\).
\end{thm}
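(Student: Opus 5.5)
The plan is to combine Theorem~\ref{thm_main}, in its refined form Proposition~(\ref{prop:d_p_constant}), with the fixed-point criterion \cite[(1.1.2.i)]{fpt} for abelian \(p\)-groups. First I reduce to the case \(\zeta_p\in k\), which is needed to apply the proposition. Let \(k'=k(\zeta_p)\), so that \([k':k]\) divides \(p-1\) and is prime to \(p\). Chern numbers are invariant under base change, so \(X_{k'}\) still has a Chern number prime to \(p\). The invariant \(d_p(G)\) can only decrease when the field grows, so \(\dim X< d_p(G)\) over \(k\) does not automatically give the same inequality over \(k'\). I therefore expect the statement to be formulated with \(d_p(G)\) computed over a field containing \(\zeta_p\); the definition (\ref{def:d_p}) presumably presupposes \(\zeta_p\in k\). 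In the worst case one uses that \(X^G\neq\varnothing\) is a geometric statement: a \(G\)-fixed closed point over \(k'\) maps to a \(G\)-fixed closed point over \(k\), because the \(G\)-action commutes with the projection \(X_{k'}\to X\).

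Assuming \(\zeta_p\in k\), Proposition~(\ref{prop:d_p_constant}) applies. It says the \(G\)-action on \(X\) factors through an abelian \(p\)-group \(A=G/N\), where \(N\) is the kernel of the action. Since \(N\) acts trivially, \(X^G=X^A\). So it suffices to show that the abelian \(p\)-group \(A\) has a fixed point on \(X\), where \(X\) is smooth projective with a Chern number prime to \(p\).

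For that step I invoke \cite[(1.1.2.i)]{fpt}: an action of an abelian \(p\)-group on a smooth projective variety with a Chern number prime to \(p\) has a fixed point, possibly defined only over an extension of \(k\), which is exactly the meaning of \(X^G\ne\varnothing\) as a closed subscheme. If I had to reprove it, I would argue by induction on \(|A|\). Pick a central cyclic subgroup \(C\cong\Zz/p\subset A\). The fixed locus \(X^C\) is smooth, and by a localization or degree formula some component of \(X^C\) carries a characteristic number, twisted by the normal bundle, that is prime to \(p\). The quotient \(A/C\) acts on \(X^C\), and one continues inductively.

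The main obstacle is this inheritance of the mod-\(p\) nonvanishing to a component of the fixed locus that is stable under \(A/C\). Since the paper cites \cite{fpt} for exactly this, I would treat it as given. The remaining care is only in the field-extension bookkeeping described above.
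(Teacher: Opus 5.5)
Your proposal is correct and follows the paper's route. The essential-\(p\)-dimension bound of Theorem~(\ref{th:quotient}), on which Proposition~(\ref{prop:d_p_constant}) rests, forces the action to factor through an abelian quotient, and the abelian fixed-point theorem of \cite{fpt} then gives \(X^G\neq\varnothing\); the paper applies it as \cite[(4.4)]{fpt}, after enlarging \(k\) so that the quotient becomes diagonalizable.

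Your detour through \(k(\zeta_p)\) is superfluous, because \(\zeta_p\in k\) is a standing assumption for this statement. Were it needed, your fixed-point descent remark would not suffice: the hypothesis \(\dim X<d_p(G)\) transfers by (\ref{p:d_p_descend}), since \([k(\zeta_p):k]\) is prime to \(p\). Note also that your use of (\ref{prop:d_p_constant}) requires \(X\) geometrically connected, as in the introduction's setting. The paper's general form (\ref{cor:fpt}) removes this by passing to a \(G\)-connected component and applying (\ref{prop:fpt}).
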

Since \(d_p(G)\ge p\) when \(G\) is nonabelian, the above theorem may be viewed as a higher version of the fixed-point theorem of \cite[(1.1.2.iii)]{fpt}, which asserted that all \(p\)-groups have fixed points on varieties of dimension \(<p\) with a Chern number prime to \(p\).
An important difference is that Theorem~\ref{intro:main} takes into account the structure of the group \(G\) via the invariant \(d_p(G)\), while the bound of \cite[(1.1.2.iii)]{fpt} was uniform (namely, the minimum of the values \(d_p(G)\) over all \(p\)-groups \(G\)).
The proof of \cite[(1.1.2.iii)]{fpt} was essentially \(K\)-theoretic and tailored in various ways for the case of dimension \(<p\), and thus very different from the methods of the current paper, as we will see below.\\

A striking feature of Theorem~\ref{intro:main} is its sensitivity to the arithmetic of the base field, in the sense that it provides more restrictions than could be obtained by passing to an algebraic closure before applying it.
In particular, it encodes purely algebraic phenomena having no analogs in topology.
This is in stark contrast to most fixed-point theorems involving Chern numbers, such as those of \cite{fpt}.

The value of the invariant \(d_p(G)\) can change upon extending the base field \(k\) for two distinct reasons: the existence of irreducible representations having nontrivial Schur index (for \(p=2\) only), and the lack of \(p\)-primary roots of unity in the field \(k\) (for any \(p\)).
For instance, there exists a \(2\)-group \(G\) such that \(d_2(G_\Qq)=8\) and \(d_2(G_{\Qq(i)})=4\).\\

Apart from the case \(p=2\) and \(d_2(G)=2\) (where the bound can actually be improved from \(2\) to \(4\)), we show that the bound of Theorem~\ref{intro:main}  is sharp, for each individual group \(G\).
This involves constructing explicit examples of actions without fixed points on varieties of the prescribed dimension \(d_p(G)\), having Chern numbers prime to \(p\).
For \(p=2\), the obvious candidate fails:
when \(V\) is a nonabelian \(G\)-representation of minimal dimension, the blow-up of the compactification \(\Pp(V\oplus 1)\) of \(V\) has no odd  Chern numbers, and blowing up further does not appear to help.
We use instead the Weil restriction of such a compactification along a suitably chosen quadratic étale algebra;
this device also lets us handle representations of Schur index \(2\).\\

The relevant tool to prove Theorem~\ref{thm_main} is the concept of essential dimension, or rather its \(p\)-primary variant, the essential \(p\)-dimension \(\ed_p(G)\) of an algebraic group \(G\) over \(k\).
This invariant encodes the arithmetic complexity of \(G\)-torsors over extensions of the field \(k\).

It turns out that a smooth projective variety \(X\) with a \(G\)-action is \emph{\(p\)-versal} if and only if the  (modulo \(p\)) degree map from Edidin--Graham's equivariant Chow group
\[
	\CH_G(X) \to \CH(X) \xrightarrow{\deg} \Zz \to \Fp
\]
is nonzero.
Under this assumption, the action on \(X\) can be generically free only when \(\dim X \ge \ed_p(G)+\dim G\).

As an aside, this point of view can actually be used as a definition of essential \(p\)-dimension, at least when resolution of singularities is available:
\begin{prop}[=(\ref{prop:ed_smooth_proj})]
	Let \(G\) be a \(k\)-group, and assume that \(k\) has characteristic zero.
	Then the number \(\ed_p(G)+\dim G\) is the least dimension of a smooth projective \(k\)-variety \(X\) with a generically free \(G\)-action such that the degree map \(\CH_G(X) \to \Fp\) is nonzero.
\end{prop}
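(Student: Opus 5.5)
The plan is to prove two inequalities: first that every admissible \(X\) satisfies \(\dim X\ge \ed_p(G)+\dim G\), then that some admissible \(X\) attains this dimension. Both directions go through the characterization stated just before the proposition: a smooth projective \(G\)-variety \(X\) is \(p\)-versal if and only if the composite \(\CH_G(X)\to\CH(X)\xrightarrow{\deg}\Zz\to\Fp\) is nonzero.

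\emph{Lower bound.} Let \(X\) be smooth projective with a generically free \(G\)-action and nonzero degree map \(\CH_G(X)\to\Fp\). By the characterization, \(X\) is \(p\)-versal. Since the action is generically free, there is a dense \(G\)-invariant open \(U\subset X\) on which \(G\) acts freely, and \(U\to U/G\) is a \(G\)-torsor. Because \(X\) is \(p\)-versal, every \(G\)-torsor over a field extension \(L/k\) becomes, after a prime-to-\(p\) extension of \(L\), the pullback of this torsor along a point of \(U/G\). (Passing from \(X\) to the open \(U\) is allowed, as this is the standard property of \(p\)-versal varieties.) Hence \(\ed_p(G)\le\dim U/G=\dim X-\dim G\).

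\emph{Upper bound.} Start from a generically free representation \(V\) of \(G\), for instance one induced from a faithful representation. Let \(T\to B\) be the generic torsor, with \(B\) an open in \(V/G\). Take a compression of \(T\) to a torsor \(T'\to B'\) with \(\dim B'=\ed_p(G)\). Concretely, after a dominant base change \(B''\to B\) of degree prime to \(p\), the torsor is pulled back from \(B'\) via a dominant rational map \(B''\dashrightarrow B'\). The total space \(T'\) is a \(G\)-variety of dimension \(\ed_p(G)+\dim G\) with a free action. Using resolution of singularities in characteristic zero, in the equivariant form (Bierstone--Milman, Villamayor), take a \(G\)-equivariant projective compactification of \(T'\) and resolve it equivariantly. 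This gives a smooth projective \(X\) containing \(T'\) as a dense open \(G\)-stable subset, so the action is generically free.

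It remains to check that \(X\) is \(p\)-versal, so that the characterization gives a nonzero degree map. The main obstacle is this transfer of versality from the compressed torsor to the compactification. For any torsor \(E\) over a field \(L\), after a prime-to-\(p\) extension \(E\) is pulled back from \(T\to B\) at a point of \(B\). Since \(V\) is versal, this point can be chosen generic enough to lie in the open set where the map to \(B'\) is defined, using a further prime-to-\(p\) extension to lift it to \(B''\). This produces a \(G\)-equivariant map \(E\to T'\subset X\). What needs care is keeping the point inside the open domain of definition, via density of good points for versal varieties over infinite fields and possibly replacing \(V\) by \(V\times\mathbb{A}^N\). Once this holds, \(X\) is \(p\)-versal, so its degree map \(\CH_G(X)\to\Fp\) is nonzero, and \(\dim X=\ed_p(G)+\dim G\), proving the equality.
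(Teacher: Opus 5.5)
Your proof is correct and follows essentially the paper's route: the paper combines the Chow-theoretic characterization of \(p\)-versality ((\ref{p:versal_CH_G}) with (\ref{prop:versal_weak})), the two inequalities of (\ref{prop:ed_versal}) (a free open subset with its quotient for the lower bound, the total space of a compressed versal torsor for the upper bound), and equivariant resolution of singularities (\ref{p:res_sing}). The only organizational difference is that you transfer versality to arbitrary torsors by hand, lifting points through the prime-to-\(p\) cover \(B''\to B\) using the versality of the representation; the paper packages exactly this argument as Lemma (\ref{lemm:index_generic_twist}) together with (\ref{p:trans_versal}).
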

This observation makes it possible to approach essential \(p\)-dimension from an intersection-theoretic perspective.
In particular, as Chern numbers modulo \(p\) belong to the image of the degree map, they can obstruct generic freeness of actions when the dimension of the variety is too low.
This is ultimately the mechanism behind the properties of \(p\)-groups discussed above, using as input the computation by Karpenko and Merkurjev of the essential \(p\)-dimension of finite \(p\)-groups \cite{KM-Ess}.

Something is lost, of course, by considering only Chern numbers instead of the full image of the degree map, but this loss is precisely what makes the obstruction applicable in practice, as these numbers do not depend on the action.
As the last section illustrates, for fixed points of actions of finite \(p\)-groups, nothing is lost in fact.
Indeed all obstructions are already captured by Chern numbers modulo \(p\), apart from dimensions \(2\) and \(3\) when \(p=2\), where Chern numbers give strictly more: varieties with an odd Chern number always have fixed points.\\

Our methods are decidedly algebraic, as they rely on the concept of essential dimension, and we do not know whether an analog holds in differential topology:
\begin{que*}
	Let \(G\) be a finite \(p\)-group acting smoothly on a closed almost complex manifold \(X\) of complex dimension \(n\), preserving the almost complex structure, and suppose some Chern number of \(X\) is prime to \(p\).
	Does it hold that \(X^G\ne \varnothing\) when \(n\) is smaller than the least dimension of a nonabelian complex representation of \(G\)?
\end{que*}

Finally let us mention that a similar strategy works for groups of multiplicative type, rather than finite \(p\)-groups, using the computation of their essential \(p\)-dimension in \cite{LMMR-tori}.
We plan to explore this topic further in a subsequent paper.\\

\noindent \emph{Outline of the paper.} 
In \S\ref{sect:essential}, we set up notation and recall some facts about essential \(p\)-dimension.
We establish the description in terms of the equivariant Chow group, which will be useful in the subsequent sections.

In \S\ref{sect:Chern}, we relate Chern numbers modulo \(p\) to essential \(p\)-dimension.
This allows us to obtain Theorem~\ref{thm_main} mentioned above.

In \S\ref{sect:noncommutative}, we specialize to obstructing noncommutative actions, and introduce the invariant \(d_p\) mentioned above.
Its definition is tailored to the statement, which then becomes mostly formal;
the formula in terms of dimensions of representations is then a computation rather than a definition, and is a direct consequence of Karpenko and Merkurjev's computation of the essential \(p\)-dimension of finite \(p\)-groups.
We deduce the fixed-point theorem, Theorem~\ref{intro:main} above.

In \S\ref{sect:birat}, we apply these results to birational automorphism groups of rationally connected varieties of low dimension (\(<p-1)\).

In \S\ref{sect:rep_p}, we gather a few technical lemmas on representations of finite \(p\)-groups.

In \S\ref{sect:derived}, we deduce a theorem restricting the derived length of \(p\)-groups acting faithfully on varieties with a Chern number prime to \(p\).

Finally in \S\ref{sect:construction}, we perform the construction of the varieties with a Chern number prime to \(p\) carrying a fixed-point-free action of a given group \(G\), and realizing the lower dimensional bound \(d_p(G)\) allowed by Theorem~\ref{intro:main}.

\section{\texorpdfstring{\(p\)}{p}-versality and weak \texorpdfstring{\(p\)}{p}-versality}
\label{sect:essential}

In this section we fix notation and establish some basic facts on the essential \(p\)-dimension, introduced in \cite{Reichstein-Youssin}.
We believe that most results of this section are standard (see for instance \cite{Merkurjev-essential_contemp,Merkurjev-Bourbaki, Duncan-Reichstein-versal}), but we could not find them in the literature packaged as in (\ref{prop:ed_smooth_proj}) for instance.

In addition, some care is taken in order to include the case of varieties which are not geometrically connected (those cannot be avoided in the formulation of (\ref{prop:ed_smooth_proj})).

\subsection*{Conventions}
For the whole paper, we fix a prime number \(p\), a field \(k\), and denote by \(\overline{k}\) an algebraic closure of \(k\).
When \(k\) has characteristic different from \(p\), we fix for each \(m\) a primitive \(p^m\)-th root of unity \(\zeta_{p^m} \in \overline{k}\).

When \(L/k\) is a field extension, and \(X\) a \(k\)-scheme, we denote by \(X_L\) the \(L\)-scheme \(X \times_k \Spec L\).

By a \(k\)-group we will mean an affine group scheme of finite type over \(k\), and by a subgroup of a \(k\)-group we will mean a closed subgroup scheme over \(k\).

A \(k\)-variety will mean a quasiprojective \(k\)-scheme, not necessarily reduced or irreducible.
The index of a \(k\)-variety is the gcd of the degrees of its closed points.\\

For the whole \S\ref{sect:essential}, we fix a \(k\)-group \(G\).

\subsection*{Essential \texorpdfstring{\(p\)}{p}-dimension}

\begin{para}
	Let \(F\) be a field containing \(k\).
	By a \(G\)-torsor over \(F\), we will mean an fppf \(G_F\)-torsor over \(\Spec F\).
	If \(F \subset L\) is a field extension, and \(T\) a \(G\)-torsor over \(F\), there is an induced \(G\)-torsor \(T_L\) over \(L\).

	Given a \(G\)-torsor \(T\) over \(F\), a subfield \(F_0 \subset F\) containing \(k\) is a \emph{field of definition of \(T\)} if there exist a \(G\)-torsor \(T_0\) over \(F_0\) and a \(G_F\)-equivariant isomorphism of \(F\)-schemes \((T_0)_F \simeq T\).

	The \emph{essential dimension} of a \(G\)-torsor \(T\) over a field \(F \supset k\) is defined as
	\[
		\ed(T) = \min \trdeg_k F_0,
	\]
	where \(F_0\subset F\) runs over the fields of definition of \(T\) containing \(k\).
	Since by \cite[(8.8.2), (8.10.5)]{ega-4-3}, any such field \(F_0\) contains a field of definition of \(T\) which is finitely generated over \(k\), we may restrict to finitely generated extensions \(F_0/k\) while taking the above minimum.
\end{para}

\begin{para}
	The \emph{essential \(p\)-dimension} of a \(G\)-torsor \(T\) over a field \(F \supset k\) is defined as
	\[
		\ed_p(T) = \min \ed(T_L),
	\]
	where \(L/F\) runs over the finite field extensions of degree prime to \(p\).

	The \emph{essential \(p\)-dimension} of \(G\) is defined as
	\[
		\ed_p(G) = \sup \ed_p(T),
	\]
	where \(T\) runs over the \(G\)-torsors over the fields containing \(k\).
\end{para}

\begin{para}
	\label{p:torsor_spread}
	If \(T\) is a \(G\)-torsor over a finitely generated field extension \(F/k\), then there exists an integral \(k\)-variety \(Y\) with an isomorphism of \(k\)-algebras \(F\simeq k(Y)\), a \(k\)-variety \(X\), and a \(G\)-torsor \(X \to Y\) whose generic fiber is \(T\).

	Indeed write \(F=k(Y)\) for some integral affine \(k\)-variety \(Y\).
	By \cite[(8.8.2)]{ega-4-3} there exists a \(k\)-scheme \(X\), together with morphisms \(a\colon G \times_k X \to X\) and \(\pi \colon X\to Y\).
	After shrinking \(X\) and \(Y\), we may assume that \(a\) is a \(G\)-action and \(\pi\) a \(G\)-torsor, by \cite[(8.10.5), (11.2.6)]{ega-4-3}.
	The fact that \(G\) is affine and that \(X \to Y\) is a \(G\)-torsor implies by descent that \(X\) is an affine \(k\)-variety.
\end{para}

\begin{para}
	\label{p:torsor_pb}
	Let \(X',Y'\) be \(k\)-schemes, and \(X \to X', Y \to Y'\) be \(G\)-torsors.
	Let \(Y \to X\) be a \(G\)-equivariant morphism.
	Then \(Y \to X \times_{X'} Y'\) is an isomorphism, being a \(G\)-equivariant morphism between \(G\)-torsors over \(Y'\).
\end{para}

\subsection*{\texorpdfstring{\(p\)}{p}-versality}
\begin{para}
	\label{p:twist_points}
	Let \(T\) be a \(G\)-torsor over a field \(K\supset k\).
	When \(X\) is a \(k\)-variety, we set \(\tw{X}{T}=(X \times_k T)/G\);
	as explained in \cite[Proposition~2.12]{Forence-cyclic} this is a \(K\)-variety, by our standing quasiprojectivity assumption.

	Given a field extension \(L/K\), an \(L\)-point of the \(K\)-variety \(\tw{X}{T}\) amounts precisely to a \(G\)-equivariant morphism \(T_L \to X\).
\end{para}

\begin{para}
	\label{p:twist_split}
	Since \(G\)-torsors over an algebraically closed field extension of \(k\) are trivial, it follows that \((\tw{X}{T})_{\overline{K}} \simeq X_{\overline{K}}\), for \(\overline{K}\) any algebraic closure of \(K\).
	In particular the \(K\)-variety \(\tw{X}{T}\) is projective, resp.\ smooth, if the \(k\)-variety \(X\) is so.
\end{para}

\begin{definition}[{\cite{Duncan-Reichstein-versal}}]
	A \emph{twisting pair} \((T,K)\) for the group \(G\) is the data of an infinite field \(K\supset k\) and a \(G\)-torsor \(T\) over \(K\).

	Let \(X\) be a \(k\)-variety with a \(G\)-action.
	The variety \(X\) is \emph{weakly \(p\)-versal}, resp.\ \emph{weakly versal}, if for every twisting pair \((T,K)\) for the group \(G\), the \(K\)-variety \(\tw{X}{T}\) has index prime to \(p\), resp.\ a rational point.

	The variety \(X\) is \emph{\(p\)-versal}, resp.\ \emph{versal}, if every \(G\)-invariant dense open subscheme of \(X\) is weakly \(p\)-versal, resp.\ weakly versal.
\end{definition}

\begin{para}
	\label{p:versal_open}
	Let \(X\) be a \(k\)-variety with a \(G\)-action, and \(U\) a \(G\)-invariant dense open subscheme of \(X\).
	Then \(X\) is \(p\)-versal, resp.\ versal, if and only if \(U\) is so.

	Indeed, for any \(G\)-invariant dense open subscheme \(W\) of \(X\), and any twisting pair \((T,K)\), the \(G\)-invariant open subscheme \(W\cap U\) of \(U\) is dense.
	Thus if \(U\) is \(p\)-versal, resp.\ versal, then the \(K\)-variety \(\tw{(W\cap U)}{T}\) has a closed point of degree prime to \(p\), resp.\ a rational point.
	The same is true for \(\tw{W}{T}\), which contains \(\tw{(W\cap U)}{T}\) as an open subscheme.
\end{para}

The following is a slight generalization of \cite[Theorem~8.3]{Duncan-Reichstein-versal}, which assumed \(X\) geometrically connected and \(G\) smooth.
\begin{proposition}
	\label{prop:versal_weak}
	Let \(X\) be a smooth \(k\)-variety with a \(G\)-action.
	Then \(X\) is \(p\)-versal if and only if it is weakly \(p\)-versal.
\end{proposition}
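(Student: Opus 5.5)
One direction is immediate from the definitions: if \(X\) is \(p\)-versal, then \(X\) itself is a \(G\)-invariant dense open subscheme of \(X\), hence weakly \(p\)-versal. For the converse, assume \(X\) is weakly \(p\)-versal and let \(U\subset X\) be a \(G\)-invariant dense open subscheme. Fix a twisting pair \((T,K)\); we must show that \(\tw{U}{T}\) has a closed point of degree prime to \(p\). By weak \(p\)-versality, \(\tw{X}{T}\) has a closed point \(x\) with \([K(x):K]\) prime to \(p\). The plan is to move this point into the dense open subscheme \(\tw{U}{T}\) of \(\tw{X}{T}\) without introducing \(p\)-primary degree, using smoothness.

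The key steps run as follows. First, \(\tw{X}{T}\) is a smooth \(K\)-variety by (\ref{p:twist_split}). Moreover \(\tw{U}{T}\) is open and dense in it, since this holds after base change to \(\overline{K}\), where the twist becomes \(U_{\overline K}\subset X_{\overline K}\), and density is detected after faithfully flat base change. Second, we replace \(K\) by \(L=K(x)\), a finite extension of degree prime to \(p\). It suffices to find a closed point of degree prime to \(p\) on \((\tw{U}{T})_L=\tw{U}{T_L}\), since degrees multiply. Now \(\tw{X}{T_L}\) has an \(L\)-rational point \(y\), lying in some irreducible component \(Z\). Since \(X\) is smooth, \(Z\) is a smooth, geometrically integral \(L\)-variety: a smooth variety with a rational point on a component has that component geometrically irreducible. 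The dense open \(\tw{U}{T_L}\) meets \(Z\) in a nonempty open \(Z^\circ\). Third, because \(Z\) is smooth at the rational point \(y\) and \(L\) is infinite (as \(K\) is), rational points are dense in \(Z\). Indeed, \(Z\) is birational to an open of affine space near \(y\) up to étale maps, so \(L(Z)\) embeds in an iterated Laurent series field and \(Z\) is \(L\)-unirational locally. Concretely, I would take a smooth curve through \(y\): intersect \(Z\) with general hypersurfaces through \(y\) cut out by linear forms with coefficients in \(L\) (possible since \(L\) is infinite). This yields a smooth geometrically integral curve \(C\subset Z\) through \(y\) meeting \(Z^\circ\).

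The remaining step, and the main obstacle, is producing on \(C\cap Z^\circ\) a closed point of degree prime to \(p\). For this I would use the standard fact that a smooth geometrically integral curve with a rational point over an infinite field has rational points on every dense open. Consider the divisor \(y\) on the smooth compactification \(\bar C\). For \(N\) large, the complete linear system \(|Ny|\) is base-point free and its sections over the infinite field \(L\) give divisors avoiding the finite set \(\bar C\setminus (C\cap Z^\circ)\). Likewise \(|(N+1)y|\) gives such divisors. These are effective zero-cycles of degrees \(N\) and \(N+1\) supported in \(C\cap Z^\circ\), so the index of \(C\cap Z^\circ\) is \(1\), and hence some closed point has degree prime to \(p\). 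The care needed is in checking that the twisted open is dense in every component without geometric connectedness assumptions, and that \(G\) need not be smooth. The latter is handled since only the smoothness of \(\tw{X}{T}\), coming from \(X\), is used.
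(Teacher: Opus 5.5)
Your argument is correct in substance, but it handles the key step differently from the paper. You and the paper agree up to the observation that \(\tw{U}{T}\) is a dense open subscheme of the smooth \(K\)-variety \(\tw{X}{T}\). The paper then concludes at once by citing \cite[Proposition~6.8]{Index}: the index of such a dense open equals the index of \(\tw{X}{T}\). You instead reprove the prime-to-\(p\) part of that statement by hand. You pass to \(L=K(x)\) to get a rational point \(y\), find a curve through \(y\) meeting the open, and use \(|Ny|\) and \(|(N+1)y|\) on a compactification to produce effective zero-cycles of coprime degrees supported in the open. This makes the proof self-contained and elementary. The citation is shorter, gives equality of indices rather than only prime-to-\(p\), and hides geometric input (curves through a point, compactifications) that you must supply in arbitrary characteristic.

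Some statements need repair.

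\emph{False side claims.} Two of your ``standard facts'' are false. Rational points of a smooth geometrically integral variety with a rational point over an infinite field need not be dense. Nor need such a curve have rational points on every dense open: the elliptic curve \(y^2=x^3-x\) over \(\Qq\) has exactly four rational points, and removing them gives a counterexample. The Laurent series / ``local unirationality'' justification is also invalid. None of this is needed, since your linear-system argument only shows that \(C\cap Z^\circ\) has index \(1\), which is exactly what is required. So these sentences should simply be deleted.

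\emph{Compactification.} Since \(k\) is arbitrary in this section, a smooth curve over an imperfect field need not have a smooth compactification. This does not matter: any projective compactification \(\bar C\) in which \(y\) is an effective Cartier divisor works. Then \(\Oc(y)\) is ample, so \(\Oc(Ny)\) and \(\Oc((N+1)y)\) are globally generated for \(N\gg 0\).

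\emph{Choice of curve.} Bertini smoothness for hyperplanes through a fixed point is delicate in positive characteristic, but you do not need it. It suffices to have an integral curve through \(y\) that is regular at \(y\) and not contained in \(Z\smallsetminus Z^\circ\); then work on its normalization, which is an isomorphism near \(y\). Let \(n=\dim Z\), and choose \(L\)-rational hyperplanes \(H_1,\ldots,H_{n-1}\ni y\) successively, requiring that:
\begin{itemize}
\item their linear forms restrict to linearly independent forms on \(\Tan_{Z,y}\);
\item each \(H_i\) contains no positive-dimensional component of \((Z\smallsetminus Z^\circ)\cap H_1\cap\cdots\cap H_{i-1}\).
\end{itemize}
These are nonempty open conditions and \(L\) is infinite. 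The component through \(y\) of \(Z\cap H_1\cap\cdots\cap H_{n-1}\) is then the required curve.
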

\begin{proof}
	Let \(U\) be a \(G\)-invariant dense open subscheme of \(X\).
	Then for any twisting pair \((T,K)\), the \(K\)-variety \(\tw{U}{T}\) is a dense open subscheme of \(\tw{X}{T}\).
	Since the \(K\)-variety \(\tw{X}{T}\) is smooth by (\ref{p:twist_split}), it follows from \cite[Proposition~6.8]{Index} that the index of \(\tw{U}{T}\) equals that of \(\tw{X}{T}\), hence is prime to \(p\).
\end{proof}

\begin{para}
	\label{p:versal_torsor}
	A \(G\)-torsor \(T\) over a field \(F \supset k\) is called \emph{versal} if there exists a \(k\)-variety \(V\) with a versal \(G\)-action, and a \(G\)-torsor \(V \to W\) over an integral \(k\)-variety \(W\) with \(k(W)=F\) and such that \(T\) is the generic fiber of \(V \to W\).
\end{para}

\begin{para}
	\label{p:rep_versal}
	Note that a \(G\)-representation \(V\) is versal, hence \(p\)-versal for any \(p\).
	Indeed for any twisting pair \((T,K)\), and any \(G\)-invariant dense open subscheme \(U\) of \(V\), the \(K\)-variety \(\tw{U}{T}\) is an open dense subscheme of \(\tw{V}{T}\).
	Since the latter is an affine space over \(K\) (see e.g.\ \cite[Lemma~3.1]{Duncan-Reichstein-versal}), the former has \(K\)-points since \(K\) is infinite.
\end{para}

\begin{para}
	\label{p:generic_exists}
	It is proven in \cite[Remark~1.4]{Totaro-CHBG} that there always exists a \(G\)-representation \(V\) containing a \(G\)-invariant dense open subscheme \(V^{\circ}\) with \(V^{\circ}/G\) a \(k\)-variety and \(V^{\circ} \to V^{\circ}/G\) a \(G\)-torsor.
	Then the \(k\)-variety \(V^{\circ}\) is geometrically integral.
	Moreover, replacing \(V\) with \(V\oplus 1\) and \(V^{\circ}\) with \(V^{\circ} \times_k \mathbb{A}^1\), we may assume that \(\dim (V^{\circ}/G)>0\), and in particular that the field \(k(V^{\circ}/G)\) is infinite.

	Setting \(F_0=k(V^{\circ}/G)\) and letting \(T_0\) be the generic fiber of \(V^{\circ} \to (V^{\circ}/G)\), we thus obtain a twisting pair \((T_0,F_0)\) such that \(T_0\) is a \(G\)-torsor over \(F_0\), and the \(k\)-scheme \(T_0\) is geometrically integral.

	In addition \(V\) is versal by (\ref{p:rep_versal}), hence so is \(V^{\circ}\) by (\ref{p:versal_open}), and thus the \(G\)-torsor \(T_0\) over \(F_0\) is versal.
\end{para}

The following lemma is probably well-known (for instance when \(G\) acts freely on \(X\), it follows from \cite[Proposition~2.8]{Merkurjev-essential_contemp}).
\begin{lemma}
	\label{lemm:index_generic_twist}
	Let \(X\) be a \(k\)-variety with a \(G\)-action.
	Let \((T,F)\) be a twisting pair.
	Let \(T_0\) be a versal \(G\)-torsor over a field \(F_0\supset k\).
	Then the index of the \(F\)-variety \(\tw{X}{T}\) divides the index of the \(F_0\)-variety \(\tw{X}{T_0}\), and if \((\tw{X}{T_0})(F_0)\ne \varnothing\) then \((\tw{X}{T})(F)\ne \varnothing\).
\end{lemma}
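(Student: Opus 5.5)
The plan is to use the versality of \(T_0\) to obtain a \(G\)-equivariant morphism from a dense open of \(T_0\)'s total space to \(T\), and then specialize closed points of \(\tw{X}{T_0}\) along it. Write \(T_0\) as the generic fiber of a \(G\)-torsor \(V\to W\), as in (\ref{p:versal_torsor}), with \(V\) versal and \(k(W)=F_0\).

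The first step is to make \(T\) relevant to \(V\). We may replace \(T\) and \(F\) by a field of definition finitely generated over \(k\); this only shrinks the field, and indices and rational points behave well under base extension in the required direction. By (\ref{p:torsor_spread}), \(T\) is then the generic fiber of a \(G\)-torsor \(P\to Y\) with \(k(Y)=F\). The key observation is that \(F\)-points of \(\tw{V}{T}\) are \(G\)-equivariant maps \(T\to V\), by (\ref{p:twist_points}).

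Versality, applied to every dense \(G\)-invariant open of \(V\), lets us choose such a map landing in any prescribed dense \(G\)-invariant open \(U=\pi^{-1}(W')\). Composing with \(\pi\), it induces a morphism \(\Spec F\to W'\), that is, an \(F\)-point \(w\) of \(W\) (not necessarily the generic point). By (\ref{p:torsor_pb}), \(T\simeq V_w\), the fiber of \(V\to W\) over \(w\), base changed to \(F\).

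Now let \(x\) be a closed point of \(\tw{X}{T_0}\) of degree \(d\). This is a \(G\)-equivariant map \((T_0)_L\to X\) with \([L:F_0]=d\). We spread it out: there is a finite flat cover \(W''\to W'\) of degree \(d\) over some dense open \(W'\subset W\), together with a \(G\)-equivariant map \(V\times_W W''\to X\). Equivalently, \((\tw{X}{V})|_{W'}\to W'\) receives a closed subscheme finite flat of degree \(d\) over \(W'\), where \(\tw{X}{V}=(X\times V)/G\) is a variety over \(W\) whose generic fiber is \(\tw{X}{T_0}\). Choosing the equivariant map \(T\to V\) to land over this \(W'\), we pull the finite flat subscheme back along \(w\). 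This gives a zero-dimensional subscheme of \(\tw{X}{T}\) of degree \(d\) over \(F\), so the index of \(\tw{X}{T}\) divides \(d\). Taking the gcd over all \(x\) yields the divisibility of indices. For \(d=1\), the pulled-back subscheme is an \(F\)-point, which gives the rational-point assertion.

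The main obstacle is the spreading-out step. Concretely, one must check that the twist \(\tw{X}{V}\) exists as a scheme over \(W\), which should follow from quasiprojectivity as in (\ref{p:twist_points}) and fppf descent. One must also check that its fiber over \(w\) is \(\tw{X}{(V_w)}\), which is compatibility of twisting with base change. Finally, the finite flat extension of a closed point over an open of \(W\) should follow from standard limit arguments, namely \cite[(8.8.2), (8.10.5)]{ega-4-3}.
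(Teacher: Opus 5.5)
Your argument is essentially the paper's proof. You spread the degree-\(d\) point of \(\tw{X}{T_0}\) out to a finite locally free \(W''\to W'\) of degree \(d\) with a \(G\)-equivariant map \(V\times_W W''\to X\). You then use versality of \(V\) over the infinite field \(F\) to get a \(G\)-equivariant map \(T\to V\) over \(W'\), so that \(T\simeq V\times_W \Spec F\) by (\ref{p:torsor_pb}), and pull back. The paper packages the last step as taking the fiber of the finite locally free map \(\tw{C}{T}\to\tw{V}{T}\), with \(C=W''\times_W V\) (its \(D\times_W V\)), over the rational point of \(\tw{V}{T}\) given by versality. That fiber is exactly your \(W''\times_{W'}\Spec F\).

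Two points need fixing, though.

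First, delete the preliminary replacement of \((T,F)\) by a finitely generated field of definition. The spreading \(P\to Y\) is never used afterwards, and the step is harmful: a finitely generated subfield of \(F\) may be finite, for instance when \(k\) is finite and \(F\) is an infinite algebraic extension of \(k\). Then \((T,F)\) is no longer a twisting pair, and versality of \(V\) no longer yields an \(F\)-point of \(\tw{V}{T}\), which is precisely the input you need.

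Second, the reformulation through \(\tw{X}{V}=(X\times_k V)/G\) over \(W\) is an unnecessary detour; you leave open both its existence as a scheme and its compatibility with base change. Instead, base change your \(G\)-equivariant map \(V\times_W W''\to X\) along \(w\). This gives a \(G\)-equivariant map \(T\times_F D_F\to X\), where \(D_F=W''\times_{W'}\Spec F\) is finite of degree \(d\) over \(F\). Each point \(q\) of \(D_F\) then gives an \(F(q)\)-point of \(\tw{X}{T}\) by (\ref{p:twist_points}). Weighting these by lengths produces a zero-cycle of degree \(d\) on \(\tw{X}{T}\), and an \(F\)-point when \(d=1\). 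Like the paper's argument, this only uses twists by torsors over fields.
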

\begin{proof}
	Let \(V \to W\) be as in (\ref{p:versal_torsor}), so that \(F_0=k(W)\).
	Let \(z\) be a closed point of degree \(d\) of the \(F_0\)-variety \(\tw{X}{T_0}\).
	This point spreads out to a finite locally free morphism \(D \to W'\) of degree \(d\), for some dense open subscheme \(W'\) of \(W\).
	Replacing \(W\) with \(W'\) and \(V\) with \(V \times_W W'\) (which remains versal by (\ref{p:versal_open})), we may assume that \(D \to W\) is finite locally free of degree \(d\).

	Set \(C = D \times_W V\).
	Then \(C \to V\) is finite locally free of degree \(d\);
	in particular the \(k\)-scheme \(C\) is quasiprojective.
	Let \(P\subset X \times_k T_0\) be the fiber over \(z\).
	Since \(X\times_kT_0 \to \tw{X}{T_0}\) is the pullback of the \(G\)-torsor \(T_0 \to \Spec F_0\) by (\ref{p:torsor_pb}), we have \(P = z \times_{F_0} T_0= C \times_W \Spec F_0\).
	Shrinking \(W\), the \(G\)-equivariant morphism \(P \to X\) extends to a \(G\)-equivariant morphism \(C \to X\) (see \cite[(8.8.2), (8.10.5)]{ega-4-3}).

	As \(F\) is infinite and \(V\) is versal, the \(F\)-variety \(\tw{V}{T}\) has a rational point.
	Since \(\tw{C}{T} \to \tw{V}{T}\) is locally free of degree \(d\), it follows that the \(F\)-variety \(\tw{C}{T}\) has an effective zero-cycle of degree \(d\).
	The existence of the morphism \(\tw{C}{T} \to \tw{X}{T}\) over \(F\) implies that the \(F\)-variety \(\tw{X}{T}\) has an effective zero-cycle of degree \(d\).
	The case \(d=1\) gives the statement about rational points.
\end{proof}

\subsection*{The \texorpdfstring{\(G\)}{G}-index}
\begin{definition}
	\label{p:G_index}
	When \(X\) is a \(k\)-variety with a \(G\)-action, it follows from (\ref{lemm:index_generic_twist}) that the index of the \(F_0\)-variety \(\tw{X}{T_0}\) does not depend on the versal \(G\)-torsor \(T_0\) over a field \(F_0 \supset k\), provided that the field \(F_0\) is infinite.
	Since such a \(G\)-torsor exists by (\ref{p:generic_exists}), this allows us to define an integer \(\ind_G(X)\) as the index of the \(F_0\)-variety \(\tw{X}{T_0}\).
\end{definition}

\begin{para}
	\label{p:trans_versal}
	The \(k\)-variety \(X\) is weakly \(p\)-versal if and only if \(\ind_G(X)\) is prime to \(p\).
	Indeed \(\ind_G(X)\) is divisible by the index of \(\tw{X}{T}\) for every twisting pair \((T,K)\), and it is realized by the pair \((T_0,F_0)\) of (\ref{p:generic_exists}).
\end{para}

\begin{para}
	\label{p:funct_G_ind}
	If \(X \to Y\) is a \(G\)-equivariant morphism between \(k\)-varieties, then \(\ind_G(Y)\) divides \(\ind_G(X)\).
\end{para}

When the \(k\)-variety \(X\) is projective, we consider the degree map from the equivariant Chow group of \cite{EG-Equ}
\[
	\deg\colon \CH_G(X) \to \CH(X) \to \CH(\Spec k) = \Zz.
\]

\begin{proposition}
	\label{prop:versal_CH_G}
	Let \(X\) be a projective \(k\)-variety with a \(G\)-action.
	Then the image of \(\deg\colon\CH_G(X) \to \Zz\) is \(\ind_G(X) \cdot \Zz\).
\end{proposition}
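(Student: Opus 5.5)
We compute \(\CH_G(X)\) via an approximation of the Borel construction. Choose the \(G\)-representation \(V\) and the open subscheme \(V^{\circ}\) of (\ref{p:generic_exists}). We may arrange that the complement of \(V^{\circ}\) in \(V\) has codimension larger than \(\dim X\), replacing \(V\) by \(V^{\oplus r}\) and \(V^\circ\) by the preimage of \(V^\circ\) under a projection if necessary (the latter is still a \(G\)-torsor over a quasiprojective base). By the definition of Edidin--Graham, in the relevant degrees \(\CH_G(X)\) is then \(\CH(X_G)\) with \(X_G=(X\times V^{\circ})/G\). The degree map \(\CH_G(X)\to\CH(X)\to\Zz\) is computed on a cycle as follows: restrict to a fiber \(X\times\{v\}\) over a rational point of \(V^\circ\) and take the degree. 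Rather than fibers, work generically. The projection \(X_G\to W=V^{\circ}/G\) is projective, with generic fiber \(\tw{X}{T_0}\) over \(F_0=k(W)\). For a cycle class \(\alpha\) of dimension \(\dim W\) on \(X_G\), its degree should equal the degree of its restriction to the generic fiber, a zero-cycle on \(\tw{X}{T_0}\). This is the key compatibility, and I expect it to be the main technical point.

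To prove it, I would note that the degree map \(\CH_G(X)\to\Zz\) factors as the pushforward \(\CH_G(X)\to\CH_G(\Spec k)\) followed by the identification of \(\CH^0_G(\Spec k)=\CH_{\dim W}(W)=\Zz\). Pushforward along the proper map \(X_G\to W\) commutes with restriction to the generic point. The generic point restriction \(\CH_{\dim W}(W)\to\CH_0(\Spec F_0)=\Zz\) is the identity, and the forgetful map to \(\CH(X)\) followed by degree is compatible with these maps. Here one uses that \(\CH_G(\Spec k)\to\CH(\Spec k)\) is an isomorphism in degree zero.

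Granting this, there are two inclusions. First, the image of \(\deg\) is contained in \(\ind_G(X)\Zz\). Indeed the generic restriction \(\CH(X_G)\to\CH_0(\tw{X}{T_0})\) lands in zero-cycles, whose degrees lie in \(\ind(\tw{X}{T_0})\Zz=\ind_G(X)\Zz\) by (\ref{p:G_index}). This uses that \(F_0\) is infinite and \(T_0\) is versal, as arranged in (\ref{p:generic_exists}). Second, \(\ind_G(X)\Zz\) is contained in the image. Restriction to the generic fiber is surjective, since \(\CH_0(\tw{X}{T_0})\) is the colimit of \(\CH(X_G\times_W W')\) over dense opens \(W'\subset W\), and \(\CH(X_G)\to\CH(X_G\times_WW')\) is surjective by the localization sequence. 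Concretely, a closed point of degree \(d\) on \(\tw{X}{T_0}\) has closure in \(X_G\) a subvariety of dimension \(\dim W\) whose class has degree \(d\). Hence every multiple of the index is attained.

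A remaining subtlety is that \(X\) need not be irreducible or geometrically connected, and that \(G\) need not be smooth. In each case I would check that only properness of \(X\) and flatness of \(X_G\to W\) are used, and these hold because \(X\times V^{\circ}\to X_G\) is a \(G\)-torsor.
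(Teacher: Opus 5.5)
Your argument is correct and is essentially the paper's proof. The only difference is that you phrase it via the mixed space \(X_G=(X\times_k V^{\circ})/G\) over \(V^{\circ}/G\) and its generic fiber, whereas the paper works directly with \(\CH_G(X\times_k T_0)=\CH(\tw{X}{T_0})\); in both cases surjectivity comes from homotopy invariance, localization and continuity, and the degree comparison comes from the factorization through \(\CH_G(\Spec k)\).

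There is one slip, and it is harmless. The preimage of \(V^{\circ}\) under a single projection \(V^{\oplus r}\to V\) does not raise the codimension of the complement, but you never need high codimension. The map \(\CH_G(X)\cong\CH_G(X\times_k V)\to\CH_G(X\times_k V^{\circ})=\CH(X_G)\) is already surjective and compatible with degrees for any such \(V^{\circ}\).
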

\begin{proof}
	Note that the map \(\deg\) factors as \(\CH_G(X) \to \CH_G(\Spec k) \to \CH(\Spec k) =\Zz\).
	Let \((T_0,F_0)\) and \(V,V^{\circ}\) be as in (\ref{p:generic_exists}).
	Note in particular that \(T_0\) is an integral scheme.
	Consider the commutative diagram
	\[
		\begin{tikzcd}
			\CH_G(X\times_k T_0) \ar[r]& \CH_G(T_0) \ar[r] & \CH(T_0)\ar[r]& \CH^0(T_0)\\
			\CH_G(X) \ar[r] \ar[u]& \CH_G(\Spec k) \ar[r] \ar[u]& \CH(\Spec k) \ar[u]\ar[equal]{r}& \Zz\ar[equal]{u}\\
		\end{tikzcd}
	\]
	The left vertical morphism decomposes as \(\CH_G(X) \to \CH_G(X \times_k V) \to \CH_G(X \times_k T_0)\), where \(V\) is a \(G\)-representation.
	The first map is an isomorphism by homotopy invariance.
	For each \(G\)-invariant open subscheme \(U\) of \(V\), the morphism \(\CH_G(X\times_k V) \to \CH_G(X\times_k U)\) is surjective by the localization sequence.
	Since \(T_0\) is the limit of those \(k\)-schemes \(U\), it follows by the continuity property of Chow groups (see e.g.\ \cite[Proposition~52.9]{EKM}) that \(\CH_G(X\times_k V) \to \CH_G(X\times_k T_0)\) is surjective.
	So, in the diagram above, the left vertical morphism is surjective, and therefore the upper and lower horizontal composites have the same image.

	Now, the composite
	\[
		\Zz=\CH(\Spec F_0) \xrightarrow{\sim} \CH_G(T_0) \to \CH(T_0) \to \CH^0(T_0)=\Zz
	\]
	sends \(1\) to \(1\), hence is the identity map.
	Under the identification \(\CH_G(X \times_k T_0) = \CH(\tw{X}{T_0})\), the upper composite in the diagram becomes the degree map \(\CH(\tw{X}{T_0}) \to \CH(\Spec F_0) =\Zz\), whose image is \(\ind_G(X) \cdot \Zz\) by definition.
\end{proof}

\begin{para}
	\label{p:versal_CH_G}
	By (\ref{prop:versal_CH_G}), a projective \(k\)-variety \(X\) is weakly \(p\)-versal if and only if the morphism \(\deg\colon\CH_G(X) \to \Fp\) is nonzero.
\end{para}

\begin{definition}
	A \(k\)-variety \(Y\) with a \(G\)-action is called \emph{\(G\)-connected} if it is nonempty and cannot be written as a nontrivial disjoint union of two open \(G\)-invariant subschemes.
	Any \(k\)-variety with a \(G\)-action has finitely many connected components, and thus decomposes \(G\)-equivariantly as a disjoint union of \(G\)-connected open subschemes.
\end{definition}

\begin{lemma}
	\label{lemm:geom_comp_div}
	Let \(X\) be a projective \(k\)-variety with a \(G\)-action.
	If \(X\) is \(G\)-connected, then \(\ind_G(X)\) is divisible by the number of geometric connected components of \(X\).
\end{lemma}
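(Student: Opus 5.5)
We aim to show that every closed point of the \(F_0\)-variety \(\tw{X}{T_0}\) has degree divisible by the number \(c\) of geometric connected components of \(X\), where \((T_0,F_0)\) is as in (\ref{p:generic_exists}). We pass to the Stein factorization, i.e.\ to the finite \(k\)-scheme of connected components. Let \(A=H^0(X,\Oc_X)\), a finite \(k\)-algebra since \(X\) is projective, and set \(Z=\Spec A\). By functoriality the morphism \(X \to Z\) is \(G\)-equivariant. Then \(\ind_G(Z)\) divides \(\ind_G(X)\) by (\ref{p:funct_G_ind}), so it suffices to treat \(Z\). Replacing \(A\) by \(A_{\red}\) if needed, we may consider \(Z_{\red}\), which receives an equivariant map from \(X_{\red}\) and has the same closed points. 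The geometric points of \(Z\) correspond to the geometric connected components of \(X\), so \(Z(\overline{k})\) has \(c\) elements. Moreover \(Z\) is \(G\)-connected because \(X\) is: a \(G\)-invariant decomposition of \(Z\) pulls back to one of \(X\).

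The key step is to show that \(\tw{Z}{T_0}\) is a connected finite \(F_0\)-scheme, hence \(\Spec\) of a local Artinian \(F_0\)-algebra with residue field \(E\). Its unique closed point then has degree \([E:F_0]\). The geometric points of \(\tw{Z}{T_0}\) are \(c\) in number by (\ref{p:twist_split}), and they all lie over that single point. If \(E/F_0\) were separable, we would get \([E:F_0]\ge c\). In general, we need the separable degree of \(E/F_0\) to equal \(c\), and then \(c\) divides \([E:F_0]\). For this we argue that the \(c\) geometric points of \(\tw{Z}{T_0}\) lie over one closed point exactly when that point is the only one; the number of geometric points over a point with residue field \(E\) is \([E:F_0]_s\). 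This gives the divisibility, and hence \(c\mid\ind_G(Z)\mid \ind_G(X)\).

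It therefore remains to prove the connectedness of \(\tw{Z}{T_0}=(Z\times_k T_0)/G\). Since \(T_0\) is geometrically integral by (\ref{p:generic_exists}), the scheme \(Z\times_k T_0\) has its connected components in bijection with the connected components of \(Z\), that is, with the points of \(Z\). This holds because \(\kappa(z)\otimes_k k(T_0)\) is a field when \(k(T_0)\) is a regular extension of \(k\); passing to reduced structures if needed handles this. The group \(G\) acts on this set of components, and a \(G\)-invariant proper subset of components would give a nontrivial \(G\)-invariant decomposition of \(Z\), contradicting \(G\)-connectedness. So \(Z\times_kT_0\) is \(G\)-connected. Since \(Z\times_kT_0 \to \tw{Z}{T_0}\) is a \(G\)-torsor and hence surjective and open, any decomposition of the quotient pulls back to a \(G\)-invariant decomposition. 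Therefore the quotient is connected.

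The main obstacle I expect is making precise this last step when \(G\) is non-smooth or disconnected. There the "action on the set of components" must be phrased via \(G\)-invariant open and closed subschemes rather than via \(G(\overline{k})\)-orbits. I would handle this by working with idempotents of \(A\otimes_k k(T_0)\) that are invariant under the coaction, and noting that these come from idempotents of \(A\) invariant under the coaction.
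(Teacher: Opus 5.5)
Your proposal is correct and follows essentially the paper's proof: you pass to \(\Spec H^0(X,\Oc_X)\), use the geometric connectedness of \(T_0\) to show that its twist by \(T_0\) is connected and hence has a single point, and conclude that this point's residue field \(E\) satisfies \([E:F_0]_{\mathrm{sep}}=c\). The only cosmetic difference is that you check the bijection on connected components directly via regularity of \(k(T_0)/k\) instead of citing the Stacks Project. One small caution: your aside about replacing \(Z\) by \(Z_{\red}\) is unnecessary and would be illegitimate for non-smooth \(G\), since the reduced subscheme need not be \(G\)-invariant, but the rest of your argument never uses it.
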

\begin{proof}
	Let \(d\) be the number of geometric connected components of \(X\).
	Let \(R=\Spec H^0(X,\Oc_X)\), a finite \(k\)-scheme.
	Then \(R\) inherits a \(G\)-action, is \(G\)-connected, and its number of geometric connected components is \(d\).
	We have \(\ind_G(R) \mid \ind_G(X)\) by (\ref{p:funct_G_ind}).

	Let now \((T_0, F_0)\) be as in (\ref{p:generic_exists}), and recall that the \(k\)-scheme \(T_0\) is geometrically connected.
	By \stacks{0385} the projection \(R \times_k T_0 \to R\) induces a bijection between the connected components of \(R\) and those of \(R \times_k T_0\).
	Moreover this projection is \(G\)-equivariant.
	We deduce that \(R \times_k T_0\) is \(G\)-connected.
	Since \(G\)-invariant open and closed subschemes of \(R \times_k T_0\) correspond bijectively to open and closed subschemes of \(\tw{R}{T_0}=(R \times_k T_0)/G\), we conclude that the scheme \(\tw{R}{T_0}\) is connected.
	Being connected and of dimension \(0\), the scheme \(\tw{R}{T_0}\) is irreducible.

	Since \((\tw{R}{T_0})_{\overline{F}} \simeq R_{\overline{F}}\) for an algebraic closure \(\overline{F}\) of \(F_0\) (see (\ref{p:twist_split})), the \(F_0\)-variety \(\tw{R}{T_0}\) has \(d\) geometric connected components.
	The same holds for the \(F_0\)-variety \((\tw{R}{T_0})_{\red}\).
	But \((\tw{R}{T_0})_{\red}=\Spec E\), for \(E/F_0\) a field extension of finite degree.
	Its degree \([E:F_0]\) is divisible by \([E:F_0]_{\mathrm{sep}}=d\).
	The index of the \(F_0\)-variety \(\tw{R}{T_0}\) equals that of \((\tw{R}{T_0})_{\red}\), which equals \([E:F_0]\).
	The statement follows.
\end{proof}

\subsection*{Generically free actions}
\begin{para}
	Let \(X\) be a \(k\)-variety with a \(G\)-action.
	The \(G\)-action is called \emph{free} if \(G \times_k X \to X \times_k X\) is a monomorphism, and \emph{generically free} if \(X\) contains a dense open subscheme on which \(G\) acts freely.
\end{para}

The following statement is well-known:
\begin{proposition}
	\label{prop:ed_versal}
	The number \(\ed_p(G) + \dim G\) is the least dimension of a \(k\)-variety with a \(p\)-versal and generically free \(G\)-action.
\end{proposition}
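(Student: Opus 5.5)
We prove the two inequalities separately. Throughout, the key fact is the correspondence between generically free \(G\)-varieties and torsors over function fields: a generically free action on \(X\) gives, after shrinking to a \(G\)-invariant dense open \(U\), a \(G\)-torsor \(U \to U/G\) whose generic fibre \(T_U\) lives over a field of transcendence degree \(\dim X - \dim G\) over \(k\).

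\emph{Upper bound for \(\ed_p(G)\).} Let \(X\) be \(p\)-versal and generically free of dimension \(d\); we show \(\ed_p(T) \le d - \dim G\) for every \(G\)-torsor \(T\) over a field \(F\supset k\). First reduce to \(F\) infinite by replacing \(F\) with \(F(t)\); this does not lower \(\ed_p\), since a field of definition \(F_0\subset F(t)\) can be specialized back to \(F\). Here we use a specialization argument for a prime-to-\(p\) extension of \(F(t)\), and we expect this step to need the most care. Next choose \(U\subset X\) a \(G\)-invariant dense open on which \(G\) acts freely and \(U\to U/G\) is a torsor; we shrink to get a quotient variety, using the quasiprojective convention and a Gabriel-type result, or via (\ref{p:torsor_spread}) applied to the generic fibre. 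Since \(X\) is \(p\)-versal, \(\tw{U}{T}\) has a closed point of degree prime to \(p\), say with residue field \(L\). By (\ref{p:twist_points}) this gives a \(G\)-equivariant morphism \(T_L\to U\), so by (\ref{p:torsor_pb}) \(T_L\) is the pullback of \(U\to U/G\) along a point \(\Spec L\to U/G\). Hence \(T_L\) is defined over the residue field of the image point, which has transcendence degree \(\le \dim(U/G)=d-\dim G\). Therefore \(\ed_p(T)\le d-\dim G\).

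\emph{Construction of a minimal variety.} Conversely, take \(V^\circ\to V^\circ/G\) as in (\ref{p:generic_exists}) with generic torsor \(T_0\) over \(F_0\). Choose a prime-to-\(p\) extension \(L/F_0\) and a field of definition \(F_1\subset L\) with \(\trdeg_k F_1=\ed_p(T_0)\le \ed_p(G)\), together with a torsor \(T_1\) over \(F_1\) such that \((T_1)_L\simeq (T_0)_L\). Spread \(T_1\) out by (\ref{p:torsor_spread}) to a torsor \(Y\to Z\), with \(\dim Y=\trdeg F_1+\dim G\) and free \(G\)-action.

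It remains to show \(Y\) is \(p\)-versal; by (\ref{p:versal_open}) it suffices to check that every invariant dense open \(Y'\) is weakly \(p\)-versal. The isomorphism \((T_1)_L\simeq(T_0)_L\) yields a \(G\)-map \((T_0)_L\to T_1\to Y'\), using that the generic point lies in \(Y'\). By (\ref{p:twist_points}) this is an \(L\)-point of \(\tw{Y'}{T_0}\), so \(\ind_G(Y')\) is prime to \(p\), and (\ref{p:trans_versal}) then gives weak \(p\)-versality. Hence \(Y\) has dimension \(\le\ed_p(G)+\dim G\), and combined with the first part we get equality.

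The main obstacle is the bookkeeping around infinite fields and the existence of the quotient \(U/G\) with \(\dim U/G=\dim X-\dim G\) for non-smooth \(G\). For the quotient we would use that the stabilizers are trivial on the free locus, together with fppf quotient results.
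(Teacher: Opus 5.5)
Your proof is correct. The construction half is the same as the paper's: spread out a field of definition of \((T_0)_L\) to a torsor \(Y\to Z\), then test weak \(p\)-versality of invariant dense opens against \(T_0\) via (\ref{p:trans_versal}).

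The lower bound is organized differently. The paper proves \(\ed_p(T_0)\le \dim X-\dim G\) only for the versal torsor \(T_0\) of (\ref{p:generic_exists}). It then cites Merkurjev's theorem that a versal, hence \(p\)-generic, torsor satisfies \(\ed_p(T_0)=\ed_p(G)\). The same equality is what gives \(\dim Y=\ed_p(G)+\dim G\) in the paper's construction.

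You instead run the specialization argument for every twisting pair \((T,F)\). This is legitimate because weak \(p\)-versality quantifies over all twisting pairs. So you bound \(\ed_p(G)=\sup_T\ed_p(T)\) directly. In the construction you then need only the trivial inequality \(\ed_p(T_0)\le \ed_p(G)\), and you get equality by applying your lower bound to \(Y\) itself.

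What each route buys: yours is self-contained and recovers \(\ed_p(T_0)=\ed_p(G)\) as a byproduct. The paper's is shorter given the citation, and never has to leave the single infinite field \(F_0\).

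The step you flagged as most delicate is unnecessary. If \(F\) is finite, it is algebraic over \(k\), so \(T\) is defined over \(F\) itself with \(\trdeg_k F=0\). Hence \(\ed_p(T)=0\le\dim X-\dim G\), since \(X\ne\varnothing\) carries a free action on a dense open. No passage to \(F(t)\) or specialization argument is needed.
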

\begin{proof}
	Let \((T_0, F_0)\) be as in (\ref{p:generic_exists}), but let us write \(T=T_0\) to ease the notation.
	The \(G\)-torsor \(T\) is versal.
	It is thus \(p\)-generic in the sense of \cite[\S2.2]{Merkurjev-essential_contemp} (see also \cite[Theorem~4.1]{Merkurjev-essential_contemp}), and we have \(\ed_p(G)=\ed_p(T)\) by \cite[Theorem~2.9]{Merkurjev-essential_contemp}.

	Let \(L/F_0\) be a field extension of finite degree prime to \(p\), and \(F\supset k\) a field of definition of \(T_L\).
	We may choose \(L\) and \(F\) such that \(F\) is finitely generated over \(k\) and \(\trdeg_k F=\ed_p(T)=\ed_p(G)\).
	So there exists a \(G\)-torsor \(S\) over \(F\) such that \(S_L=T_L\).
	Then by (\ref{p:torsor_spread}) there exists a \(k\)-variety \(B\) and a \(G\)-torsor \(Y \to B\) whose generic fiber is \(S \to \Spec F\).
	Note that the \(G\)-action on \(Y\) is free.

	Assume that \(U\) is a \(G\)-invariant dense open subscheme of \(Y\).
	Then the morphism \(\Spec L \to \Spec F \to U/G\) lifts to a \(G\)-equivariant morphism \(T_L\to U\), yielding by (\ref{p:twist_points}) an \(L\)-point of the \(F_0\)-variety \(\tw{U}{T}\).
	By (\ref{p:trans_versal}), the \(k\)-variety \(U\) is weakly \(p\)-versal.
	We conclude that \(Y\) is \(p\)-versal.
	\[
		\dim Y = \dim Y/G + \dim G = \trdeg_k F + \dim G = \ed_p(G) + \dim G.
	\]

	It remains to show that \(\dim X \ge \ed_p(G) + \dim G\) whenever \(X\) is a \(k\)-variety with a \(p\)-versal and generically free \(G\)-action.

	Let \(U\) be a \(G\)-invariant dense open subscheme of \(X\) where \(G\) acts freely.
	Shrinking \(U\), we may further assume that the fppf quotient \(U/G\) exists as a \(k\)-scheme, by \cite[Proposition~4.7]{Thomason-comparison}.
	Since \(X\) is \(p\)-versal, the variety \(U\) is weakly \(p\)-versal.
	Thus the \(F_0\)-variety \(\tw{U}{T}\) has an \(E\)-point, for some field extension \(E/F_0\) of finite degree prime to \(p\).
	By (\ref{p:twist_points}) this gives a \(G\)-equivariant \(T_E \to U\).
	By (\ref{p:torsor_pb}), we deduce that \(T_E=U \times_{U/G} \Spec E\).
	Thus there exists a point \(u\) of \(U/G\) and an inclusion \(k \subset k(u) \subset E\) such that the torsor \(T_E\) over the field \(E\) is defined over the field \(k(u)\), and so
	\[
		\ed_p(G)=\ed_p(T) \le \trdeg_k k(u) \le \dim U/G= \dim X -\dim G.\qedhere
	\]
\end{proof}

\begin{para}
	\label{p:ed_X_G_smooth}
	Assume that the field \(k\) is perfect and that \(G\) is smooth over \(k\).
	Then we claim that in (\ref{prop:ed_versal})  we may additionally require the variety to be smooth over \(k\).

	Indeed assume that \(X\) is a \(k\)-variety with a generically free and \(p\)-versal \(G\)-action.
	Since \(G\) is smooth, the closed subscheme \(X_{\red}\) is \(G\)-invariant.
	It carries a generically free and \(p\)-versal \(G\)-action (for any twisting pair \((T,K)\) the morphism \(\tw{(X_{\red})}{T} \to \tw{X}{T}\) is a nilimmersion).
	Replacing \(X\) with \(X_{\red}\), we may thus assume that \(X\) is reduced.
	Let \(X^{\sm}\) be the smooth locus of \(X\), a \(G\)-invariant open subscheme which is dense because \(k\) is perfect.
	The \(k\)-variety \(X^{\sm}\) is then the required \(p\)-versal (by (\ref{p:versal_open})) and generically free variety.
\end{para}

\begin{para}
	\label{p:res_sing}
	If \(k\) has characteristic zero, by (\ref{p:ed_X_G_smooth}), (\ref{p:versal_open}), and equivariant resolution of singularities \cite[Corollary~3.6, Remark~3.3]{Reichstein-Youssin}, we may require the \(k\)-variety in (\ref{prop:ed_versal}) to be smooth and projective.
\end{para}

\begin{proposition}
	\label{prop:ed_smooth_proj}
	Assume that \(k\) has characteristic zero.
	Then \(\ed_p(G)+\dim G\) is the least dimension of a smooth projective \(k\)-variety \(X\) with a generically free \(G\)-action such that \(\deg\colon\CH_G(X) \to \Fp\) is nonzero.
\end{proposition}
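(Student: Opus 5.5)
The plan is to combine (\ref{prop:ed_versal}), (\ref{p:res_sing}), (\ref{prop:versal_weak}) and (\ref{p:versal_CH_G}). The point is that for smooth projective varieties, \(p\)-versality is detected by the modulo \(p\) degree map on the equivariant Chow group. The proof then splits into the two inequalities defining ``least dimension''.

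\emph{Realization of the bound.} By (\ref{prop:ed_versal}) there exists a \(k\)-variety of dimension \(\ed_p(G)+\dim G\) carrying a \(p\)-versal and generically free \(G\)-action. Since \(k\) has characteristic zero, (\ref{p:res_sing}) lets us replace it by a smooth projective \(k\)-variety \(X\) of the same dimension that is still \(p\)-versal with a generically free action. Concretely, we first pass to the smooth locus as in (\ref{p:ed_X_G_smooth}). We then take an equivariant projective compactification and resolve singularities equivariantly, keeping the original variety as a dense \(G\)-invariant open subscheme. This preserves both \(p\)-versality, by (\ref{p:versal_open}), and generic freeness. A \(p\)-versal variety is in particular weakly \(p\)-versal, by taking the dense open subscheme to be \(X\) itself. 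By (\ref{p:versal_CH_G}), the degree map \(\CH_G(X)\to\Fp\) is then nonzero.

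\emph{Minimality.} Let \(X\) be a smooth projective \(k\)-variety with a generically free \(G\)-action for which \(\deg\colon\CH_G(X)\to\Fp\) is nonzero. By (\ref{p:versal_CH_G}), \(X\) is weakly \(p\)-versal. Since \(X\) is smooth, (\ref{prop:versal_weak}) shows that it is \(p\)-versal. The second half of (\ref{prop:ed_versal}) then gives \(\dim X\ge \ed_p(G)+\dim G\).

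The only delicate point is the realization step, namely checking that (\ref{p:res_sing}) genuinely yields a projective model. Equivariant resolution in the form cited produces a smooth projective \(X'\) containing a \(G\)-invariant dense open subscheme that is \(G\)-equivariantly isomorphic to a dense open subscheme of the original smooth variety. Two facts make this work. First, \(p\)-versality passes between a variety and any of its \(G\)-invariant dense open subschemes, in both directions, by (\ref{p:versal_open}). Second, generic freeness is inherited from the open part. Dimension is also preserved, since the open part is dense. Note that \(X\) need not be geometrically connected, which is harmless because none of the cited results require it.
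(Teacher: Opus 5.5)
Your proposal is correct and matches the paper's proof, which likewise obtains the statement by combining (\ref{p:versal_CH_G}), (\ref{prop:versal_weak}) and (\ref{prop:ed_versal}), with (\ref{p:res_sing}) supplying the smooth projective model. You argue both inequalities the same way the paper implicitly does, just spelled out in more detail.
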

\begin{proof}
	This follows by combining (\ref{p:versal_CH_G}), (\ref{prop:versal_weak}) and (\ref{prop:ed_versal}), in view of (\ref{p:res_sing}).
\end{proof}

\section{Chern numbers and \texorpdfstring{\(p\)}{p}-versality}
\label{sect:Chern}
We fix a \(k\)-group \(G\) in this section.
\begin{para}
	\label{p:Chern_equiv}
	Let \(X\) be a smooth projective \(k\)-variety.
	A Chern number of \(X\) is the degree, in \(\Zz\), of a polynomial with integral coefficients in the Chern classes of the tangent bundle \(\Tan_X\).
	Recall from \cite{EG-Equ} that Chern classes of equivariant vector bundles lift to the equivariant Chow ring.
	When \(G\) acts on \(X\), the tangent bundle \(\Tan_X\) is \(G\)-equivariant, so that the Chern numbers of \(X\) belong to the image of \(\deg\colon\CH_G(X) \to \Zz\).
\end{para}

\begin{proposition}
	\label{prop:Chern_versal}
	Let \(X\) be a smooth projective \(k\)-variety with a \(G\)-action.
	If a Chern number of \(X\) is prime to \(p\), then \(X\) is \(p\)-versal.
\end{proposition}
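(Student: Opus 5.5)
The plan is to combine (\ref{p:Chern_equiv}) with the characterization of weak \(p\)-versality via the equivariant degree map, and then upgrade from weak \(p\)-versality to \(p\)-versality using smoothness.

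First, by (\ref{p:Chern_equiv}), every Chern number of \(X\) is the degree of some class in \(\CH_G(X)\). This is because the tangent bundle \(\Tan_X\) is \(G\)-equivariant, so its Chern classes lift to equivariant Chern classes in \(\CH_G(X)\). Any polynomial in these lifts is again a class of \(\CH_G(X)\), and its image in \(\CH(X)\) is the corresponding polynomial in the ordinary Chern classes of \(\Tan_X\). If one such Chern number is prime to \(p\), the composite \(\deg\colon \CH_G(X)\to\Zz\to\Fp\) is therefore nonzero. By (\ref{prop:versal_CH_G}), the image of \(\deg\) is \(\ind_G(X)\cdot\Zz\), so \(\ind_G(X)\) divides a number prime to \(p\) and is itself prime to \(p\). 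Since \(X\) is projective, (\ref{p:versal_CH_G}) (equivalently (\ref{p:trans_versal})) then shows that \(X\) is weakly \(p\)-versal.

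Finally, \(X\) is smooth, so (\ref{prop:versal_weak}) upgrades weak \(p\)-versality to \(p\)-versality, which is the claim.

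The only point that needs care is the first step. One must check that the product structure and the Chern classes on \(\CH_G(X)\) are compatible with the forgetful map \(\CH_G(X)\to\CH(X)\), so that the lifted polynomial really maps to the nonequivariant one. This is standard from the construction of \cite{EG-Equ}, since the forgetful map is a pullback to a fiber of the approximation \(X\times U\to (X\times U)/G\), and pullbacks respect Chern classes and products. Once that is granted, the argument is a direct chain of the earlier results.
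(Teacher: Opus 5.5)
Your proposal is correct and follows essentially the same route as the paper: Chern numbers lie in the image of \(\deg\colon\CH_G(X)\to\Zz\) by (\ref{p:Chern_equiv}), so \(X\) is weakly \(p\)-versal by (\ref{p:versal_CH_G}), and hence \(p\)-versal by (\ref{prop:versal_weak}). Your added check that equivariant Chern classes and products are compatible with the forgetful map \(\CH_G(X)\to\CH(X)\) is what the paper leaves implicit in (\ref{p:Chern_equiv}).
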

\begin{proof}
	The morphism \(\deg:\CH_G(X) \to \Fp\) is nonzero by (\ref{p:Chern_equiv}), hence \(X\) is weakly \(p\)-versal by (\ref{p:versal_CH_G}), and therefore \(p\)-versal by (\ref{prop:versal_weak}).
\end{proof}

\begin{theorem}
	\label{th:main}
	Let \(X\) be a smooth projective \(k\)-variety with a generically free \(G\)-action.
	If a Chern number of \(X\) is prime to \(p\), then \(\dim X \ge \ed_p(G) + \dim G\).
\end{theorem}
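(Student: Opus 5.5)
The theorem follows by combining (\ref{prop:Chern_versal}) with the lower bound in (\ref{prop:ed_versal}). Let \(X\) be as in the statement, with a Chern number prime to \(p\).

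The first step applies (\ref{prop:Chern_versal}). The tangent bundle \(\Tan_X\) is \(G\)-equivariant, so each Chern number of \(X\) lies in the image of \(\deg\colon \CH_G(X)\to\Zz\) by (\ref{p:Chern_equiv}). One of these numbers is prime to \(p\), so the reduction \(\deg\colon\CH_G(X)\to\Fp\) is nonzero. By (\ref{p:versal_CH_G}), which rests on the identification of the image of the degree with \(\ind_G(X)\cdot\Zz\) in (\ref{prop:versal_CH_G}), the variety \(X\) is weakly \(p\)-versal. Since \(X\) is smooth, (\ref{prop:versal_weak}) upgrades this to \(p\)-versality.

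The second step uses the hypothesis that the \(G\)-action on \(X\) is generically free. Together with the first step, \(X\) is a \(k\)-variety with a \(p\)-versal and generically free \(G\)-action. The second half of the proof of (\ref{prop:ed_versal}) then gives \(\dim X\ge \ed_p(G)+\dim G\).

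To recall that argument: choose a dense \(G\)-invariant open \(U\) on which \(G\) acts freely, shrunk so that \(U/G\) exists. Take the versal torsor \(T_0\) over \(F_0\) from (\ref{p:generic_exists}). Weak \(p\)-versality of \(U\) gives a \(G\)-equivariant morphism \((T_0)_E\to U\) for some extension \(E/F_0\) of degree prime to \(p\). Hence \((T_0)_E\) is defined over the residue field of a point of \(U/G\), which has transcendence degree at most \(\dim X-\dim G\).

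No step is a real obstacle, since all the substantive inputs are already in place. The only point to check is that (\ref{prop:ed_versal}) requires nothing beyond \(p\)-versality and generic freeness. In particular, it needs no smoothness, no characteristic assumption, and no geometric connectedness, so the theorem holds over an arbitrary field \(k\) as stated.
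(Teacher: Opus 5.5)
Your proposal is correct and follows the paper's own proof. The paper also derives the theorem in one line, by combining (\ref{prop:Chern_versal}) (a Chern number prime to \(p\) gives \(p\)-versality, via the degree map and (\ref{prop:versal_weak})) with the lower bound in (\ref{prop:ed_versal}) for \(p\)-versal, generically free actions.
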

\begin{proof}
	This follows from (\ref{prop:Chern_versal}) and (\ref{prop:ed_versal}).
\end{proof}

\begin{para}
	\label{p:cond}
	We consider the following condition on the group \(G\):

	\emph{For every quotient \(Q\) of \(G\) by a closed normal subgroup, every faithful \(Q\)-action on a geometrically integral \(k\)-variety is generically free.}
\end{para}

\begin{para}
	\label{p:finite_genfree}
	The group \(G\) satisfies Condition (\ref{p:cond}) when it is finite étale.
	Indeed, since any quotient of a finite étale \(k\)-group is a finite étale \(k\)-group, it suffices to show that if \(G\) acts faithfully on a geometrically integral \(k\)-variety \(X\), then the action is generically free.

	Let \(L/k\) be a finite Galois extension such that the \(L\)-group \(G_L\) is finite constant.
	The complements of the fixed closed subschemes \((X_L)^g\), for \(g\in G(L)\smallsetminus \{1\}\), are nonempty (because the \(G_L\)-action is faithful and \(X_L\) is reduced).
	Their intersection \(U\) is thus nonempty, because \(X_L\) is irreducible.
	This is a \(G_L\)-invariant open subscheme of \(X_L\), on which \(G_L\) acts freely.
	It is also stable under the action of \(\Gal(L/k)\), and so is of the form \(W_L\), for some open subscheme \(W\) of \(X\) over \(k\).
	The open subscheme \(W\) is \(G\)-invariant in \(X\) and the \(G\)-action on \(W\) is free, because this becomes true after extending scalars to \(L\).
\end{para}

\begin{remark}
	Other examples of groups satisfying Condition~(\ref{p:cond}) include groups of multiplicative type.
\end{remark}

\begin{theorem}
	\label{th:quotient}
	Assume that \(G\) satisfies the condition of (\ref{p:cond}).
	Let \(X\) be a smooth, projective, geometrically connected \(k\)-variety with a \(G\)-action.
	Assume that some Chern number of \(X\) is prime to \(p\).
	Then the \(G\)-action on \(X\) factors through a quotient \(Q\) of \(G\) satisfying \(\ed_p(Q) + \dim Q\le \dim X\).
\end{theorem}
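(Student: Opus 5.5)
Let \(N\) be the kernel of the action, meaning the largest closed normal subgroup of \(G\) acting trivially on \(X\) (the scheme-theoretic kernel of \(G \to \Aut(X)\), which is a closed normal subgroup). Then \(Q = G/N\) is a quotient of \(G\) by a closed normal subgroup, the \(G\)-action on \(X\) factors through \(Q\), and the induced \(Q\)-action on \(X\) is faithful. The plan is to check that this \(Q\)-action is generically free and then apply (\ref{th:main}) to \(Q\) acting on \(X\).

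First I need \(X\) to be geometrically integral. It is smooth and geometrically connected, hence geometrically irreducible and geometrically reduced, so geometrically integral. Condition (\ref{p:cond}) applied to the quotient \(Q\) then says the faithful \(Q\)-action on \(X\) is generically free.

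Next, the Chern numbers of \(X\) depend only on \(X\), not on the group acting. So the hypothesis still holds when \(X\) is regarded as a smooth projective \(k\)-variety with a generically free \(Q\)-action and a Chern number prime to \(p\). Theorem (\ref{th:main}), with \(Q\) in place of \(G\), gives \(\dim X \ge \ed_p(Q) + \dim Q\), which is the claim.

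The only point needing care is the existence of the kernel \(N\) as a closed normal subgroup scheme, so that \(Q\) is an honest \(k\)-group acting faithfully. I would define \(N\) as the stabilizer of the identity, namely the subfunctor of \(g\) with \(g \cdot x = x\) for all points \(x\). Since \(X\) is projective, this functor is representable by a closed subgroup scheme via the action on \(H^0\) of sufficiently many powers of an ample bundle, or by standard results on kernels of actions on proper schemes. It is normal by construction. The quotient \(G/N\) exists as an affine \(k\)-group, and \(Q\) acts on \(X\) with trivial kernel. For finite étale \(G\), the case needed later, this is elementary.
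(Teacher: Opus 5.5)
Your proposal is correct and is the paper's proof: take the scheme-theoretic kernel $H$ of $G\to\Aut_k(X)$, which is a closed normal subgroup (the paper cites SGA3, VI$_B$, 6.2.4 e)); the faithful $G/H$-action on the geometrically integral variety $X$ is generically free by Condition (\ref{p:cond}); then (\ref{th:main}) gives the inequality. One small caveat: your sketch of representability via sections of powers of an ample bundle would need that bundle to be $G$-linearized, which is not automatic, so it is cleaner to invoke the standard SGA3 result on kernels as the paper does.
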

\begin{proof}
	Let \(H = \ker (G \to \Aut_k(X))\);
	this is a closed normal subgroup of \(G\) (see e.g.\ \cite[VI\(_B\), 6.2.4 e), p.370]{SGA3-1}).
	The action of \(Q=G/H\) on \(X\) is faithful, hence generically free, by (\ref{p:cond}) (the \(k\)-variety \(X\) is geometrically integral, being smooth and geometrically connected).
	The statement then follows from (\ref{th:main}).
\end{proof}

\begin{corollary}
	\label{cor:quotient}
	Let \(k\) be a field of characteristic different from \(p\).
	Let \(X\) be a smooth, projective, geometrically connected \(k\)-variety with an action of a finite constant \(p\)-group \(G\).
	Assume that some Chern number of \(X\) is prime to \(p\).

	Then the \(G\)-action on \(X\) factors through a subgroup of \(\GL_n(k(\zeta_p))\), where \(n = \dim X\).
\end{corollary}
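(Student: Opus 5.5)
Since \(G\) is a finite constant group of order prime to the characteristic, it is finite étale, so by (\ref{p:finite_genfree}) it satisfies Condition~(\ref{p:cond}). Theorem~(\ref{th:quotient}) therefore applies: the \(G\)-action on \(X\) factors through a quotient \(Q\) of \(G\), acting faithfully on \(X\), with \(\ed_p(Q) \le \dim X = n\) (as \(\dim Q=0\)). Here \(Q\) is again a finite constant \(p\)-group, and it suffices to embed \(Q\) into \(\GL_n(k(\zeta_p))\).

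Next I reduce to the base field \(k'=k(\zeta_p)\). The extension \(k'/k\) has degree dividing \(p-1\), hence prime to \(p\). By the standard behaviour of essential \(p\)-dimension under extensions of degree prime to \(p\), we have \(\ed_p(Q_{k'}) = \ed_p(Q)\). Alternatively, and staying within the paper, I can base change \(X\) to \(k'\). The variety \(X_{k'}\) is still smooth, projective and geometrically connected, with the same Chern numbers and the same faithful \(Q\)-action. So Theorem~(\ref{th:main}) over \(k'\) gives \(\ed_p(Q_{k'}) \le n\) directly. I will use this second route, since it avoids citing anything extra.

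The main input is the theorem of Karpenko and Merkurjev \cite{KM-Ess}. For a finite \(p\)-group \(Q\) over a field \(k'\) of characteristic \(\ne p\) containing \(\zeta_p\), it states that \(\ed_p(Q_{k'}) = \ed(Q_{k'})\) equals the least dimension of a faithful representation of \(Q\) over \(k'\). Hence \(Q\) has a faithful \(k'\)-representation of some dimension \(m \le n\). Adding a trivial summand of dimension \(n-m\) gives an injective homomorphism \(Q \hookrightarrow \GL_n(k')\), and composing with \(G \to Q\) shows that the action factors through a subgroup of \(\GL_n(k(\zeta_p))\).

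The main obstacle is checking that the hypotheses of Karpenko--Merkurjev are met: the root of unity \(\zeta_p\) must lie in the base field, which is exactly why we pass to \(k(\zeta_p)\). I must also confirm that their result is stated with \(\ed_p\) and not only with \(\ed\), so that the lower bound \(\ed_p(Q_{k'}) \ge\) (minimal faithful dimension) is available. Their theorem is stated for \(\ed_p\), so this should go through.
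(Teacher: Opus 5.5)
Your proposal is correct and follows the paper's proof: Theorem~(\ref{th:quotient}), applicable by (\ref{p:finite_genfree}), followed by Karpenko--Merkurjev \cite{KM-Ess} over \(k(\zeta_p)\). The only difference is that the paper extends scalars to \(k(\zeta_p)\) at the start, whereas you do it after obtaining \(Q\). In that case, note that Theorem~(\ref{th:main}) over \(k(\zeta_p)\) requires a generically free action, not just a faithful one, so you must invoke (\ref{p:finite_genfree}) again for the geometrically integral variety \(X_{k(\zeta_p)}\). Alternatively, use \(\ed_p(Q_{k(\zeta_p)})\le \ed_p(Q)\), which holds for any field extension.
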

\begin{proof}
	Extending scalars to \(k(\zeta_p)\), we may assume that \(\zeta_p \in k\).
	We apply (\ref{th:quotient}), which is possible by (\ref{p:finite_genfree}).
	We obtain that \(G\) acts on \(X\) through a quotient \(Q\), necessarily a finite constant \(p\)-group as well, such that \(e=\ed_p(Q) \le n\).
	By \cite{KM-Ess} there exists a faithful \(Q\)-representation \(V\) of dimension \(e\) over \(k\).
	This means that \(Q\) is a subgroup of \(\GL_e(k)\), hence also one of \(\GL_n(k)\).
\end{proof}

\begin{remark}
	\label{ex:Minkowski}
	Let \(M_k(n,p) \in \Nn\cup \{\infty\}\) be the supremum of the integers \(a\) such that \(\GL_n(k)\) contains a finite \(p\)-subgroup of order \(p^a\).
	Then in the situation of (\ref{cor:quotient}), the \(G\)-action on \(X\) factors through a group of order \(p^a\) with \(a \le M_{k(\zeta_p)}(n,p)\).
	This bound is finite when \(k\) is finitely generated over its prime field \cite[\S4.3]{Serre-Gk} (explicit formulas are given in \cite[\S6]{Serre-Gk} and \cite{GL-order}).
\end{remark}

\begin{proposition}
	\label{prop:fpt}
	Let \(G\) be a finite étale \(k\)-group such that \(G(\overline{k})\) is a \(p\)-group.
	Let \(X\) be a smooth projective \(k\)-variety with a \(G\)-action.
	Let \(d\) be the number of geometric connected components of \(X\).
	Assume that some Chern number of \(X\) is prime to \(p\), and that \(X\) is \(G\)-connected.

	Then \(d\) is prime to \(p\).
	In addition, there exist a separable field extension \(K/k\) of degree \(d\) and a quotient \(Q\) of \(G_K\) satisfying \(\ed_p(Q)\le \dim X\), such that \(G\) acts on \(X\) through \(R_{K/k}(Q)\).
\end{proposition}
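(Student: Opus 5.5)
The plan is to first control the geometric components of \(X\) through the finite \(k\)-scheme \(R=\Spec H^0(X,\Oc_X)\), and then reduce to the geometrically connected case over a field \(K\) by applying (\ref{th:quotient}).

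\textbf{Step 1: \(d\) is prime to \(p\).} By (\ref{prop:Chern_versal}), \(X\) is \(p\)-versal, in particular weakly \(p\)-versal, so \(\ind_G(X)\) is prime to \(p\) by (\ref{p:trans_versal}). Since \(X\) is projective and \(G\)-connected, (\ref{lemm:geom_comp_div}) shows that \(d\) divides \(\ind_G(X)\). Hence \(d\) is prime to \(p\).

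\textbf{Step 2: \(G\) acts trivially on \(R\), and \(R=\Spec K\).} Because \(X\) is smooth, hence geometrically reduced, \(R\) is a finite étale \(k\)-scheme with \(d\) geometric points. The group \(G\) acts on \(R\), and \(R\) is \(G\)-connected. I would pass to \(\overline{k}\) and consider the action of the semidirect product \(G(\overline{k})\rtimes\Gal(\overline{k}/k)\) on \(R(\overline{k})\). Since \(R\) is \(G\)-connected, this action should be transitive. The Galois group permutes the \(G(\overline{k})\)-orbits, so they all have the same size. That size is a power of \(p\), because \(G(\overline{k})\) is a \(p\)-group, and it divides \(d\). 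Since \(d\) is prime to \(p\), every orbit is a single point, so \(G\) acts trivially on \(R\). Then \(G\)-connectedness forces \(R\) to be connected, so \(R=\Spec K\) with \(K/k\) a separable field extension of degree \(d\). I expect this step to be the main obstacle, since \(G\) is only finite étale and not constant: one must check carefully that a \(G\)-action which is trivial on \(\overline{k}\)-points is trivial, which holds because \(R\) is étale, and that the orbit argument is compatible with descent.

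\textbf{Step 3: reduction to \(K\).} The structure map \(X\to R=\Spec K\) is \(G\)-equivariant with \(G\) acting trivially on the target. Hence the action is by \(K\)-automorphisms and corresponds to a \(G_K\)-action on the \(K\)-variety \(X\). As a \(K\)-variety, \(X\) is smooth, projective and geometrically connected, since \(H^0(X,\Oc_X)=K\). Because \(K/k\) is separable, the tangent bundle of \(X\) over \(K\) is the same as over \(k\). Moreover the \(k\)-degree of a zero-cycle equals \(d\) times its \(K\)-degree. So a Chern number prime to \(p\) over \(k\) yields one prime to \(p\) over \(K\).

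Now \(G_K\) is finite étale, so it satisfies Condition (\ref{p:cond}) by (\ref{p:finite_genfree}). Applying (\ref{th:quotient}) over \(K\), the \(G_K\)-action factors through a quotient \(Q\) of \(G_K\) with \(\ed_p(Q)\le\dim X\), noting \(\dim Q=0\).

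Finally, the action of \(G\) on \(X\) is the composite of the adjunction unit \(G\to R_{K/k}(G_K)\) with \(R_{K/k}(G_K)\to R_{K/k}(Q)\), followed by the map induced by the \(Q\)-action on the \(K\)-variety \(X\). Hence \(G\) acts through \(R_{K/k}(Q)\), as claimed.
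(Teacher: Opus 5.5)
Your proposal is correct and follows essentially the same route as the paper. You bound \(d\) by \(\ind_G(X)\) through the \(G\)-connectedness lemma, use the \(p\)-group orbit argument for \(G(\overline{k})\rtimes\Aut(\overline{k}/k)\) to show that \(G\) acts trivially on the étale scheme \(\Spec H^0(X,\Oc_X)=\Spec K\), and then apply the quotient theorem to the \(K\)-variety \(X\), whose \(k\)-Chern numbers are \(d\) times its \(K\)-Chern numbers; the paper does the same, except that it runs the orbit argument on the connected components of \(X_{\overline{k}}\) (first deducing that \(X\) is connected) rather than on \(R(\overline{k})\), which is equivariantly the same set.
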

\begin{proof}
	Since a Chern number of \(X\) is prime to \(p\), it follows from (\ref{p:Chern_equiv}), (\ref{prop:versal_CH_G}) and (\ref{lemm:geom_comp_div}) that \(d\) is prime to \(p\).

	Let \(\Gamma=\Aut(\overline{k}/k)\).
	The group \(G(\overline{k})\rtimes \Gamma\) acts on the set \(\Cc\) of connected components of \(X_{\overline{k}}\).
	The orbits correspond to the \(G\)-invariant open and closed subschemes of \(X\).
	Since \(X\) is \(G\)-connected, the \(G(\overline{k})\rtimes \Gamma\)-action on \(\Cc\) is transitive.
	As \(G(\overline{k})\) is normal in \(G(\overline{k}) \rtimes \Gamma\), the subgroup \(\Gamma\) permutes transitively the \(G(\overline{k})\)-orbits; they thus have the same cardinality, a power \(p^a\) of \(p\) since \(G(\overline{k})\) is a \(p\)-group.
	Since \(p^a\) divides \(|\Cc|=d\), and \(d\) is prime to \(p\), we have \(a=0\).
	Therefore \(G(\overline{k})\) acts trivially on \(\Cc\), and so \(\Gamma\) acts transitively on \(\Cc\).
	We have established that \(X\) is connected.

	The \(k\)-algebra \(K=H^0(X,\Oc_X)\) is a separable field extension: indeed its spectrum is  connected and geometrically reduced, being scheme-theoretically dominated by \(X\).
	In addition \([K:k]=d\), an integer prime to \(p\).
	The group \(G\) acts on \(\Spec K\).
	The set of geometric connected components of \(\Spec K\) is \(G(\overline{k})\rtimes \Gamma\)-equivariantly in bijection with \(\Cc\).
	In particular they carry the trivial \(G(\overline{k})\)-action.
	Since \(\Spec K\) is an étale \(k\)-scheme, this implies that \(G\) acts trivially on \(\Spec K\).

	The morphism \(X \to \Spec k\) factors \(G\)-equivariantly through \(\Spec K\).
	The \(K\)-variety \(X\) is geometrically connected, smooth, and projective, and it carries a \(G_K\)-action.
	Its dimension equals the dimension of the \(k\)-variety \(X\).
	It has a Chern number prime to \(p\), since the Chern numbers of the \(k\)-variety \(X\) equal those of the \(K\)-variety \(X\) multiplied by the integer \(d=[K:k]\), which is prime to \(p\) (note that \(\Omega_{K/k}=0\), and so \(\Omega_{X/k}=\Omega_{X/K}\)).

	We may thus apply (\ref{th:quotient}) to show that the \(G_K\)-action on the \(K\)-variety \(X\) factors through a quotient \(Q\) of \(G_K\) such that \(\ed_p(Q) \le \dim X\).
	Finally the \(G\)-action on \(X\) over \(k\) factors through the morphism of \(k\)-groups \(G \to R_{K/k}Q\) corresponding to the morphism of \(K\)-groups \(G_K \to Q\).
\end{proof}

\begin{corollary}
	\label{cor:fpt_disc}
	Assume that \(k\) contains a root of unity of order \(p\).
	Let \(X\) be a smooth projective \(k\)-variety with an action of a finite constant \(p\)-group \(G\).
	Assume that some Chern number of \(X\) is prime to \(p\).
	Let \(d\) be the number of geometric connected components of \(X\), and assume that \(X\) is \(G\)-connected.

	Then the \(G\)-action on \(X\) factors through a subgroup of \(\GL_n(K)\), where \(n = \dim X\) and \(K/k\) is a finite separable extension of degree \(d\), prime to \(p\).
\end{corollary}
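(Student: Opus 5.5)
The plan is to reduce to (\ref{prop:fpt}) and then argue as in the proof of (\ref{cor:quotient}).

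First, a finite constant \(p\)-group \(G\) is finite étale over \(k\) and \(G(\overline{k})=G\) is a \(p\)-group, so (\ref{prop:fpt}) applies. It gives three things. The number \(d\) is prime to \(p\). There is a finite separable field extension \(K/k\) of degree \(d\), namely \(K=H^0(X,\Oc_X)\). There is a quotient \(Q\) of \(G_K\) with \(\ed_p(Q)\le n\) such that \(G\) acts on \(X\) through \(G\to R_{K/k}(Q)\). From the proof of (\ref{prop:fpt}) we may moreover take \(Q\) to be the image of \(G_K\) in \(\Aut_K(X)\), i.e.\ the quotient by the kernel of the \(G_K\)-action on the \(K\)-variety \(X\). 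Since \(G_K\) is the constant group over \(K\) attached to \(G\), the quotient \(Q\) is a finite constant \(p\)-group over \(K\), equal to \(G/H\) where \(H\) is the subgroup of elements of \(G\) acting trivially on \(X\).

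Next, \(K\) contains \(k\), hence contains a root of unity of order \(p\). So Karpenko--Merkurjev \cite{KM-Ess}, applied over the base field \(K\), gives a faithful representation of \(Q\) over \(K\) of dimension \(\ed_p(Q)\le n\). Adding a trivial summand, this yields an embedding \(Q\subset \GL_n(K)\).

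Finally, we check that the action factors. Since \(H\) acts trivially on \(X\), the action of the abstract group \(G\) on \(X\) factors through \(G/H=Q\subset\GL_n(K)\), which is the claim. The only delicate point is identifying the quotient from (\ref{prop:fpt}) with the constant group \(G/H\); I expect this to be the main place requiring care. It follows because the kernel of a constant group action on the geometrically integral \(K\)-variety \(X\) is a subgroup of \(G\), and the \(G_K\)-action on \(X\) as a \(K\)-variety coincides with the given \(G\)-action on \(X\) as a \(k\)-variety. The passage through \(R_{K/k}\) is then just bookkeeping, since the map \(G\to R_{K/k}(Q)\) on \(k\)-points is \(G\to Q(K)=Q\).
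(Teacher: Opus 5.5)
Your proposal is correct and follows essentially the same route as the paper: apply (\ref{prop:fpt}), note that \(Q\) is a finite constant \(K\)-group with \(\ed_p(Q)\le n\), embed it into \(\GL_n\) over \(K\) via Karpenko--Merkurjev, and conclude by looking at \(k\)-points. The only difference is bookkeeping: the paper passes through \(R_{K/k}Q\subset R_{K/k}\GL_n\) and uses \((R_{K/k}\GL_n)(k)=\GL_n(K)\), whereas you identify the acting group directly as \(G/H\); both are valid.
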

\begin{proof}
	We apply (\ref{prop:fpt}), and use its notation.
	The quotient \(Q\) of \(G_K\) is a finite constant \(K\)-group, and \(e=\ed_p(Q) \le n\).
	By \cite{KM-Ess}, the \(K\)-group \(Q\) is a subgroup of \(\GL_e\), hence also one of \(\GL_n\).
	Recall that \(G\) acts on \(X\) through a subgroup of \(R_{K/k}Q\subset R_{K/k}\GL_n\).
	Since \(G\) is finite constant, as an abstract group it acts through a subgroup of \((R_{K/k}\GL_n)(k)=\GL_n(K)\).
\end{proof}

\section{Obstructing noncommutative actions}
\label{sect:noncommutative}
\numberwithin{theorem}{subsection}
\numberwithin{lemma}{subsection}
\numberwithin{proposition}{subsection}
\numberwithin{corollary}{subsection}
\numberwithin{example}{subsection}
\numberwithin{definition}{subsection}
\numberwithin{remark}{subsection}

\subsection{\'Etale groups}
In this section \(G\) is a finite étale \(k\)-group such that \(G(\overline{k})\) is a \(p\)-group, and we assume that the characteristic of \(k\) differs from \(p\).

\begin{remark}
	When \(F\) is a field of characteristic \(p\), any finite constant \(p\)-group acting on a smooth projective \(F\)-variety \(X\) with a Chern number prime to \(p\) has a nonempty fixed locus by \cite[(1.1.2)]{fpt}, regardless of the dimension of \(X\).
\end{remark}

\begin{definition}
	\label{def:d_p}
	By a quotient of \(G\) we will mean a quotient by a closed normal subgroup.
	We define the invariant, in \(\Nn \cup \{\infty\}\)
	\[
		d_p(G) = \min \{\ed_p(Q), \text{ for \(Q\) a quotient of \(G\) with \(Q(\overline{k})\) not abelian}\}.
	\]
	Thus \(d_p(G)=\infty\) when \(G(\overline{k})\) is abelian.
\end{definition}

\begin{para}
	\label{p:d_p_ext}
	If \(L/k\) is a field extension, then \(d_p(G) \ge d_p(G_L)\).

	Indeed for any quotient \(Q\) of \(G\), the group \(Q_L\) is a quotient of \(G_L\), the group \(Q_L(\overline{L})\) is nonabelian if \(Q(\overline{k})\) is, and \(\ed_p(Q_L) \le \ed_p(Q)\) by \cite[Proposition~1.5~(1)]{Merkurjev-essential_contemp}.
\end{para}

\begin{theorem}
	\label{th:fpt}
	Let \(X\) be a smooth, projective, geometrically connected \(k\)-variety with a \(G\)-action.
	Assume that some Chern number of \(X\) is prime to \(p\), and that \(\dim X < d_p(G)\).
	Then \(X^G \ne \varnothing\).

	In addition, the \(G\)-action on \(X\) factors through a quotient \(Q\) such that \(Q(\overline{k})\) is an abelian \(p\)-group, and \(\ed_p(Q) \le \dim X\).
\end{theorem}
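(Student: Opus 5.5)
The plan is to prove the second assertion first and then deduce the fixed point from the abelian case treated in \cite{fpt}.

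\emph{Step 1: reduce to an abelian quotient.} Let \(H=\ker(G\to\Aut_k(X))\) and \(Q=G/H\). By (\ref{p:finite_genfree}), \(G\) satisfies Condition (\ref{p:cond}). Since \(X\) is geometrically connected, (\ref{th:quotient}) applies and gives \(\ed_p(Q)\le \dim X\); here \(\dim Q=0\) because \(Q\) is finite étale. The group \(Q(\overline{k})\) is a quotient of \(G(\overline{k})\), hence a \(p\)-group. If it were nonabelian, the definition (\ref{def:d_p}) of \(d_p(G)\) would give \(d_p(G)\le \ed_p(Q)\le \dim X\), contradicting the hypothesis. So \(Q(\overline{k})\) is an abelian \(p\)-group, which is the second statement.

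\emph{Step 2: produce the fixed point.} Since the \(G\)-action factors through \(Q\), we have \(X^G=X^Q\), and it suffices to show \(X^Q\ne\varnothing\). The result \cite[(1.1.2.i)]{fpt} states that actions of abelian \(p\)-groups on smooth projective varieties with a Chern number prime to \(p\) have nonempty fixed locus, possibly only over an extension of \(k\). Nonemptiness of a closed subscheme is insensitive to extension of scalars, because \((X^Q)_{\overline{k}}=(X_{\overline{k}})^{Q_{\overline{k}}}\) and the fixed locus commutes with base change. We may therefore pass to \(\overline{k}\). There \(Q\) becomes a constant abelian \(p\)-group, and the Chern numbers of \(X_{\overline{k}}\) coincide with those of \(X\). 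Applying \cite[(1.1.2.i)]{fpt} to \(X_{\overline{k}}\) then gives \(X^Q\ne\varnothing\).

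\emph{Main obstacle.} The only delicate point is matching the hypotheses of \cite[(1.1.2.i)]{fpt} as formulated there: whether it requires a constant group, and whether it holds in every characteristic different from \(p\). Passing to \(\overline{k}\) handles the first concern. If the cited statement is instead phrased through an abelian \(p\)-group acting on a variety with a Chern number prime to \(p\) over an arbitrary field, it can be applied to \(Q\) directly. No further input seems necessary, since the dimension bound from (\ref{th:quotient}) is used only to force commutativity.
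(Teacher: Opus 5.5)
Your proposal is correct and follows the paper's proof essentially step for step: (\ref{th:quotient}), together with the definition of \(d_p(G)\), forces the effective quotient \(Q\) to be abelian, and one then extends scalars and applies the fixed-point result of \cite{fpt} for abelian groups. The only cosmetic difference is that the paper extends scalars just far enough to make \(Q\) diagonalizable and cites \cite[(4.4)]{fpt}, the diagonalizable case; your passage to \(\overline{k}\) also achieves this, since the characteristic of \(k\) differs from \(p\), so the hypothesis-matching worry you raise is resolved the same way.
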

\begin{proof}
	By (\ref{th:quotient}), the \(G\)-action factors through a quotient \(Q\) such that \(\ed_p(Q) \le \dim X\).
	As \(G(\overline{k})\) is a finite \(p\)-group, so is its quotient \(Q(\overline{k})\).
	Since \(\dim X<d_p(G)\) by assumption, this implies that \(Q(\overline{k})\) is an abelian group.
	To show that \(X^G\ne \varnothing\), we may extend the base field (which changes neither the Chern numbers of \(X\) nor the emptiness of the scheme \(X^G\)), and thus assume that the \(k\)-group \(Q\) is diagonalizable.
	Then \cite[(4.4)]{fpt} implies that \(X^G=X^Q\ne \varnothing\).
\end{proof}

\begin{para}
	We set
	\[
		d_{(p)}(G) = \min \{d_p(G_K), \text{ where \(K/k\) is finite separable of degree prime to \(p\)}\}.
	\]
\end{para}

\begin{para}
	We have \(d_{(p)}(G) \le d_p(G)\).
\end{para}

\begin{corollary}
	\label{cor:fpt}
	Let \(X\) be a smooth projective \(k\)-variety with a \(G\)-action.
	Assume that some Chern number of \(X\) is prime to \(p\), and that \(\dim X < d_{(p)}(G)\).
	Then \(X^G \ne \varnothing\).
\end{corollary}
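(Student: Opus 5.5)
We first reduce to the case where \(X\) is \(G\)-connected. Decompose \(X\) \(G\)-equivariantly into \(G\)-connected open and closed pieces \(X = X_1 \sqcup \dots \sqcup X_r\). Each \(X_i\) is smooth and projective. By (\ref{p:Chern_equiv}) and (\ref{prop:versal_CH_G}), the Chern numbers of \(X\) lie in \(\ind_G(X)\Zz\). Since \(\ind_G(X)\) is the gcd of the \(\ind_G(X_i)\), some \(X_i\) has \(\ind_G(X_i)\) prime to \(p\). I do not expect this \(X_i\) to itself have a Chern number prime to \(p\), since Chern numbers are not additive in this naive way.

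To avoid that issue, I will rerun the argument of (\ref{prop:fpt}) under the weaker hypothesis that \(\ind_G(X)\) is prime to \(p\), i.e.\ that \(\deg\colon \CH_G(X)\to\Fp\) is nonzero. Alternatively one can note that a Chern number of \(X\) is a sum of the corresponding Chern numbers of the \(X_i\), so some \(X_i\) has that Chern number prime to \(p\). This second route is simpler, since a polynomial in the Chern classes of \(\Tan_X\) restricts componentwise and the degree is additive. I will use it: replace \(X\) by such an \(X_i\). It suffices to find a fixed point on \(X_i\), because \(X_i^G \subset X^G\).

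Now \(X\) is \(G\)-connected with a Chern number prime to \(p\), so (\ref{prop:fpt}) applies. It gives that \(X\) is connected, that \(K=H^0(X,\Oc_X)\) is a separable extension of \(k\) of degree \(d\) prime to \(p\), and that \(G\) acts trivially on \(\Spec K\). Moreover, as in that proof, the \(K\)-variety \(X\) is smooth, projective and geometrically connected, carries a \(G_K\)-action, has the same dimension, and has a Chern number prime to \(p\).

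By definition of \(d_{(p)}(G)\), we have \(\dim X < d_{(p)}(G) \le d_p(G_K)\). We apply (\ref{th:fpt}) over \(K\) to the group \(G_K\); it is finite étale over \(K\) with \(G_K(\overline{K})\) a \(p\)-group. This gives that the fixed locus \(X^{G_K}\) of the \(K\)-variety \(X\) is nonempty.

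The last step is to identify \(X^{G_K}\), computed over \(K\), with \(X^G\), computed over \(k\). This holds because the \(G\)-action over \(k\) is \(K\)-linear: \(G\) acts trivially on \(K\), so the action factors through \(G_K\) acting on \(X/K\). The fixed-point functors then agree, since a point of \(X\) over a \(k\)-algebra \(A\) fixed by \(G\) corresponds to a \(K\otimes_k A\)-point fixed by \(G_K\). This bookkeeping, together with the additivity reduction, is the only place requiring care; everything else is a direct appeal to (\ref{prop:fpt}) and (\ref{th:fpt}).
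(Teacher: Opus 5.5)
Your proof is correct and is essentially the paper's own argument, specifically the alternative route the paper gives in parentheses. You reduce to a \(G\)-connected open and closed piece with a Chern number prime to \(p\) by additivity of degrees over components, use (\ref{prop:fpt}) to view \(X\) as a smooth, projective, geometrically connected \(K\)-variety with \([K:k]\) prime to \(p\), apply (\ref{th:fpt}) over \(K\) via \(d_{(p)}(G)\le d_p(G_K)\), and conclude with \(X^{G_K}=X^G\).

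One small correction concerns your justification of that last identification. The bijection between points of \(X\) over a \(k\)-algebra \(A\) and \((K\otimes_k A)\)-points of the \(K\)-variety \(X\) is the Weil restriction adjunction \((R_{K/k}X)(A)=X(K\otimes_k A)\), not a description of \(\Hom_k(\Spec A, X)\). Nevertheless \(X^G=X^{G_K}\) holds: since \(G\) is constant, both are the intersection over \(g\in G\) of the equalizers of \(g\) and \(\id_X\), and these do not depend on whether \(X\) is viewed over \(k\) or over \(K\).
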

\begin{proof}
	Since a Chern number of \(X\) is prime to \(p\), there exists a \(G\)-connected closed and open subscheme \(X'\) of \(X\) having a Chern number prime to \(p\).
	Since \(X'^G \subset X^G\), we may replace \(X\) with \(X'\) and assume that \(X\) is \(G\)-connected.

	Let us apply (\ref{prop:fpt}) and use its notation.
	We fix an embedding \(K \subset \overline{k}\) over \(k\).
	Since \([K:k]\) is prime to \(p\), we have \(d_{(p)}(G)\le d_p(G_K)\).
	Therefore \(Q(\overline{k})\) is an abelian \(p\)-group, hence the same is true for \(R_{K/k}(Q)(\overline{k})\simeq Q(\overline{k})^d\).
	As in the proof of (\ref{th:fpt}), this implies that \(X^G\ne \varnothing\).

	(Alternatively, since \(X\) is actually a \(K\)-variety, we may apply (\ref{th:fpt}) to the \(K\)-variety \(X\), which is smooth, projective, geometrically connected and has a Chern number prime to \(p\).
	Since \(X^{G_K}=X^G\) this gives the result.)
\end{proof}

\subsection{Finite constant \texorpdfstring{\(p\)}{p}-groups}
In this section we assume that \(G\) is a finite \(p\)-group, viewed as a \(k\)-group.
We also assume that \(k\) has characteristic different from \(p\).

\begin{para}
	\label{p:d_p_descend}
	If \(L/k\) is a field extension of finite degree prime to \(p\), then \(d_p(G)=d_p(G_L)\).
	Consequently \(d_p(G)=d_{(p)}(G)\).

	Indeed let us fix an embedding \(L \subset \overline{k}\) over \(k\).
	Then any quotient of \(G_L\) is of the form \(Q_L\), for a quotient \(Q\) of \(G\), and \(Q_L(\overline{k})=Q(\overline{k})\).
	In addition, we have \(\ed_p(Q)= \ed_p(Q_L)\) by \cite[Proposition~1.5~(2)]{Merkurjev-essential_contemp}.
\end{para}

Let us now deduce a formula for the invariant \(d_p(G)\) when \(G\) is a finite constant \(p\)-group, using Karpenko and Merkurjev's \cite{KM-Ess} computation of the essential dimension of \(p\)-groups.
\begin{proposition}
	\label{prop:d_p}
	Assume that the field \(k\) contains a root of unity of order \(p\).
	If the \(k\)-group \(G\) is a finite constant \(p\)-group, then
	\[
		d_{(p)}(G)=d_p(G) =  \min \dim_k V,
	\]
	where \(V\) runs over the nonabelian \(G\)-representations over \(k\) (i.e.\ those that do not factor through the abelianization of \(G\)).
\end{proposition}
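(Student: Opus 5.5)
The plan is to reduce everything to Karpenko and Merkurjev's theorem \cite{KM-Ess}. Since \(\zeta_p\in k\) and \(\operatorname{char} k\ne p\), that theorem says: for a finite constant \(p\)-group \(Q\) over \(k\), \(\ed_p(Q)\) equals the least dimension of a faithful \(Q\)-representation over \(k\).

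\textbf{First equality and the abelian case.} The equality \(d_{(p)}(G)=d_p(G)\) is already (\ref{p:d_p_descend}), so only \(d_p(G)=\min\dim_k V\) remains. Every quotient of the constant group \(G\) is a finite constant \(p\)-group \(Q=G/N\). I will use the following reformulation throughout:
\begin{itemize}
\item a \(G\)-representation \(V\) is nonabelian if and only if \([G,G]\not\subset\ker(G\to\GL(V))\);
\item equivalently, if and only if the image of \(G\) in \(\GL(V)\) is a nonabelian group.
\end{itemize}
If \(G\) is abelian, there are no nonabelian quotients and no nonabelian representations, so both sides equal \(\infty\). From now on assume \(G\) is nonabelian.

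\textbf{Inequality \(\min\dim_k V\le d_p(G)\).} Choose a nonabelian quotient \(Q=G/N\) with \(\ed_p(Q)=d_p(G)\). By \cite{KM-Ess}, \(Q\) has a faithful representation \(W\) over \(k\) with \(\dim_k W=\ed_p(Q)\). Inflate \(W\) to a \(G\)-representation \(V\). Then \(\ker(G\to\GL(V))=N\), so the image of \(G\) in \(\GL(V)\) is isomorphic to \(Q\), which is nonabelian. Hence \(V\) is a nonabelian \(G\)-representation of dimension \(d_p(G)\).

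\textbf{Inequality \(d_p(G)\le\min\dim_k V\).} Let \(V\) be a nonabelian \(G\)-representation of minimal dimension, and let \(Q\) be the image of \(G\) in \(\GL(V)\). Then \(Q\) is a quotient of \(G\), it is nonabelian by the reformulation above, and \(V\) is a faithful \(Q\)-representation. Therefore \(\ed_p(Q)\le\dim_k V\), and so \(d_p(G)\le\ed_p(Q)\le\dim_k V\).

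The bound \(\ed_p(Q)\le\dim_k V\) follows from \cite{KM-Ess}. It also follows directly from this paper: a faithful representation of a finite group is generically free by (\ref{p:finite_genfree}), and it is versal, hence \(p\)-versal, by (\ref{p:rep_versal}). Then (\ref{prop:ed_versal}) gives the bound, because \(\dim Q=0\).

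\textbf{Main obstacle.} The only deep input is the first inequality, namely the existence of a faithful representation of dimension exactly \(\ed_p(Q)\). This is precisely the content of \cite{KM-Ess}, and it is where the hypothesis \(\zeta_p\in k\) is used. All other steps are formal bookkeeping about kernels and images.
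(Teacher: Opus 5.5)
Your proposal is correct and follows the paper's proof. The first equality comes from (\ref{p:d_p_descend}). The bound \(\min\dim_k V\le d_p(G)\) comes from inflating a faithful representation of dimension \(\ed_p(Q)\) supplied by \cite{KM-Ess}. The reverse bound comes from passing to the image \(Q\) of \(G\) in \(\GL(V)\) and using \(\ed_p(Q)\le\dim_k V\) for a faithful \(Q\)-representation.

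The only difference is cosmetic. The paper justifies this last inequality by producing a versal \(Q\)-torsor from a free open subset of \(V\) via \cite[Examples~5.4, 5.5]{Garibaldi-Merkurjev-Serre}. Your route through (\ref{p:finite_genfree}), (\ref{p:rep_versal}) and (\ref{prop:ed_versal}) works equally well.
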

\begin{proof}
	The first equality is (\ref{p:d_p_descend}).
	Let \(V\) be a nonabelian \(G\)-representation over \(k\).
	Let \(H\) be the kernel of \(G\to \GL(V)\), and \(Q=G/H\).
	Then \(V\) is a faithful \(Q\)-representation.
	By \cite[Examples~5.4, 5.5]{Garibaldi-Merkurjev-Serre} there exists a \(Q\)-invariant dense open subscheme \(V^{\circ}\) which is a \(Q\)-torsor over a \(k\)-scheme \(V^{\circ}/Q\).
	The generic fiber \(T_0\) of the morphism \(V^{\circ}\to V^{\circ}/Q\) is then a versal \(Q\)-torsor.
	We thus have \(\ed_p(Q) \le \dim (V^{\circ}/Q)=\dim_k V\).
	Since \(V\) is a nonabelian \(G\)-representation, the quotient \(Q\) of \(G\) is not an abelian group, so that by definition \(d_p(G) \le \ed_p(Q)\).
	We thus obtain \(d_p(G) \le \dim_k V\).

	Conversely let \(Q\) be a noncommutative quotient of \(G\) such that \(\ed_p(Q)=d_p(G)\).
	By \cite{KM-Ess} (recall that \(k\) contains a root of unity of order \(p\), and so its characteristic differs from \(p\)), there exists a faithful \(Q\)-representation \(V\) such that \(\ed_p(Q)=\dim_k V\).
	The \(G\)-representation \(V\) must be nonabelian, being a faithful representation of the nonabelian group \(Q\).
\end{proof}

\begin{proposition}
	\label{prop:d_p_constant}
	Let \(X\) be a smooth, projective, geometrically connected \(k\)-variety with an action of \(G\).
	Assume that \(X\) has a Chern number prime to \(p\).
	If \(\dim X <d_p(G)\), then the \(G\)-action on \(X\) factors through the group \((\Zz/p^{n_1}) \times\ldots \times (\Zz/p^{n_r})\), with \(n_1, \ldots,n_r \ge 1\) and
	\[
		[k(\zeta_{p^{n_1}}):k(\zeta_p)] + \ldots + [k(\zeta_{p^{n_r}}):k(\zeta_p)] \le \dim X.
	\]
\end{proposition}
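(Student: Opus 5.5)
By (\ref{th:fpt}), the \(G\)-action on \(X\) factors through a quotient \(Q\) of \(G\) such that \(Q\) is an abelian \(p\)-group and \(\ed_p(Q)\le \dim X\). The remaining task is to turn the inequality \(\ed_p(Q)\le\dim X\) into the stated arithmetic bound. Write \(Q\simeq (\Zz/p^{n_1})\times\cdots\times(\Zz/p^{n_r})\) with all \(n_i\ge 1\); this is possible since \(Q\) is a finite abelian \(p\)-group (if \(Q\) is trivial, take \(r=0\)). It then suffices to prove
\[
	\ed_p(Q) \ge [k(\zeta_{p^{n_1}}):k(\zeta_p)] + \ldots + [k(\zeta_{p^{n_r}}):k(\zeta_p)]
\]
for a suitable choice of such a decomposition.

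First we pass to \(k'=k(\zeta_p)\). Its degree over \(k\) divides \(p-1\), hence is prime to \(p\), so \(\ed_p(Q_{k'})=\ed_p(Q)\) by \cite[Proposition~1.5~(2)]{Merkurjev-essential_contemp}. The quantities \([k(\zeta_{p^{n}}):k(\zeta_p)]=[k'(\zeta_{p^n}):k']\) are unchanged by this replacement. We may therefore assume \(\zeta_p\in k\). In that case \cite{KM-Ess} identifies \(\ed_p(Q)\) with the minimal dimension of a faithful \(Q\)-representation over \(k\).

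It remains to compute this minimum for an abelian \(p\)-group. Every irreducible \(k\)-representation \(W\) of \(Q\) factors through a cyclic quotient \(Q/\ker W\simeq \Zz/p^m\). Since \(\zeta_p\in k\), its dimension is \([k(\zeta_{p^m}):k]\) when \(m\ge1\): the representation is obtained from a character of order \(p^m\) with values in \(\overline{k}\), by Galois descent along its orbit under \(\Gal(k(\zeta_{p^m})/k)\). If a faithful representation \(V=\bigoplus_{i=1}^s W_i\) is a sum of nontrivial irreducibles, then \(\bigcap_i \ker W_i=1\). Hence \(Q\) embeds into \(\prod_{i} \Zz/p^{m_i}\) and \(\dim V=\sum_i [k(\zeta_{p^{m_i}}):k]\).

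The main obstacle is that we need \(Q\) itself to have the required product form, not merely to be a subgroup of one. There are two ways to handle this. One is to choose \(V\) minimal, so that \(s\) equals the rank of \(Q\); then, by the Karpenko--Merkurjev description of a minimal faithful representation as a sum of \(\operatorname{rank}(Q)\) irreducibles with a basis of \(\Omega_1(Q)^*\)-type characters, \(Q\simeq\prod \Zz/p^{m_i}\) with the \(m_i\) matching. The other, which is simpler, is to replace \(Q\) by the image \(Q'\) of \(G\) in \(\prod_i \Zz/p^{m_i}\) and then argue that the action factors through the larger product \(P=\prod_i\Zz/p^{m_i}\). Strictly speaking, the action factors through \(Q\subset P\), and "factors through" in the statement should be read as acting through this group, i.e.\ via a homomorphism \(G\to P\) together with an action of \(P\) on \(X\). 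If that reading is not acceptable, use the first route: Karpenko--Merkurjev's proof gives \(\ed_p(Q)=\sum_{j}\dim W_j\) where \(W_j\) runs over a minimal set of irreducibles whose restrictions to the socle \(\Omega_1(Q)\) form a basis of its dual. A standard greedy argument then shows that these characters have orders \(p^{n_1},\dots,p^{n_r}\) matching the invariants of \(Q\). This yields the inequality above, and combining with \(\ed_p(Q)\le\dim X\) finishes the proof.
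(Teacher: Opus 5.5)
Your reduction is sound, and applying (\ref{th:fpt}) directly over \(k\) is slightly more economical than the paper, which first passes to \(k(\zeta_p)\) and then descends the quotient. The gap is exactly at the step you flag as the main obstacle.

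Route~2 only shows that the image \(Q\) of \(G\) is isomorphic to a subgroup of \(P=\prod_i\Zz/p^{m_i}\) with \(\sum_i[k(\zeta_{p^{m_i}}):k]\le\dim X\). There is no action of \(P\) on \(X\) through which the \(G\)-action factors, so this reinterpretation does not prove the statement. Passing from ``subgroup of such a product'' to ``isomorphic to such a product'' is precisely what remains to be shown.

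Route~1 asserts that a minimal faithful \(V\) gives \(Q\simeq\prod_i\Zz/p^{m_i}\) with matching exponents, and this is false in general. Minimality does force exactly \(\operatorname{rank}Q\) summands, but the embedding \(Q\hookrightarrow\prod_{i=1}^r\Zz/p^{m_i}\) need not be onto. For instance, let \(k\) contain all \(p\)-power roots of unity and \(Q=\Zz/p\times\Zz/p^2\). The characters \((a,b)\mapsto\zeta_{p^2}^b\) and \((a,b)\mapsto\zeta_p^a\zeta_{p^2}^b\) give a minimal faithful representation in which both characters have order \(p^2\). The ``standard greedy argument'' meant to repair this is never supplied.

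What is missing is a monotonicity fact, and it makes minimality irrelevant. If \(A\subset B\) are finite abelian \(p\)-groups with invariants \(a_1\ge a_2\ge\cdots\) and \(b_1\ge b_2\ge\cdots\), then \(a_j\le b_j\) for every \(j\). To see this, compare \(\dim_{\Fp}(p^tA)[p]=\#\{j: a_j>t\}\) with the corresponding quantity for \(B\), for each \(t\ge 0\). Apply this to the embedding \(Q\hookrightarrow\prod_{i=1}^s\Zz/p^{m_i}\) coming from any faithful \(V\) with \(\dim_k V=\ed_p(Q)\). Since \(n\mapsto[k(\zeta_{p^n}):k]\) is nondecreasing with values \(\ge1\), you get \(\sum_j[k(\zeta_{p^{n_j}}):k]\le\sum_i[k(\zeta_{p^{m_i}}):k]=\ed_p(Q)\le\dim X\), as required.

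Alternatively, you can skip the computation entirely. The paper simply quotes \cite[Corollary~5.2]{KM-Ess}, which gives \(\ed_p(Q)=\sum_j[k(\zeta_{p^{n_j}}):k(\zeta_p)]\) directly for \(Q\simeq\prod_j\Zz/p^{n_j}\).
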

\begin{proof}
	Since \([k(\zeta_p):k]\) is prime to \(p\), we have \(d_p(G)=d_p(G_{k(\zeta_p)})\) by (\ref{p:d_p_descend}).
	By (\ref{th:fpt}), the \(G_{k(\zeta_p)}\)-action on the \(k(\zeta_p)\)-variety \(X_{k(\zeta_p)}\) factors through a quotient \(Q'\) of the \(k(\zeta_p)\)-group \(G_{k(\zeta_p)}\), such that \(\ed_p(Q') \le \dim X_{k(\zeta_p)}\) and \(Q'\) is a finite constant abelian \(p\)-group.
	Since \(G\) is finite constant, the quotient \(Q'\) is of the form \(Q_{k(\zeta_p)}\), for some quotient \(Q\) of the \(k\)-group \(G\).
	In addition the \(G\)-action on \(X\) factors through \(Q\), since it does so after extending scalars.

	We have \(\ed_p(Q)=\ed_p(Q')\) by \cite[Proposition~1.5~(2)]{Merkurjev-essential_contemp}.
	The \(k\)-group \(Q\) is a finite constant abelian \(p\)-group, and so we may write \(Q \simeq (\Zz/p^{n_1}) \times \ldots\times (\Zz/p^{n_r})\), with \(n_1, \ldots,n_r \ge 1\).
	Therefore by \cite[Corollary~5.2]{KM-Ess}
	\[
		[k(\zeta_{p^{n_1}}):k(\zeta_p)] + \ldots + [k(\zeta_{p^{n_r}}):k(\zeta_p)] = \ed_p(Q')=\ed_p(Q) \le\dim X_{k(\zeta_p)} =\dim X.\qedhere
	\]
\end{proof}

\begin{para}
	\label{rem:d_p_power}
	When the \(k\)-group \(G\) is a finite constant \(p\)-group, the invariant \(d_p(G)\) is either \(\infty\) (if \(G\) is abelian), or a nontrivial power of \(p\) (otherwise).
	In particular we have \(d_p(G) \ge p\).

	Indeed, denote by \(\zeta_p\in \overline{k}\) a root of unity of order \(p\).
	Since \([k(\zeta_p):k]\) is prime to \(p\), it follows from (\ref{p:d_p_descend}) that \(d_p(G)=d_p(G_{k(\zeta_p)})\), and so we may assume that \(\zeta_p \in k\), and use the formula of (\ref{prop:d_p}).
	As a one-dimensional representation of a finite group is abelian, we have \(d_p(G) >1\).
	In addition, a nonabelian \(G\)-representation has to be irreducible if it is of the minimal dimension \(d_p(G)\).
	It is well-known that irreducible representations of \(p\)-groups have dimension a power of \(p\) over a field containing a root of unity of order \(p\) (see (\ref{p:V_irred}) below).
\end{para}

\begin{remark}
	Since \(d_p(G) \ge p\) (see (\ref{rem:d_p_power})), Corollary~(\ref{cor:fpt}) recovers \cite[(1.1.2.iii)]{fpt} as a special case.
\end{remark}

\begin{example}
	\label{ex:extraspecial}
	Assume that \(k\) is algebraically closed of characteristic not \(p\).
	If \(G\) is an extraspecial \(p\)-group of order \(p^{1+2n}\), then \(d_p(G)=p^n\), since irreducible representations of \(G\) have dimension \(1\) or \(p^n\).
\end{example}

\begin{example}
	\label{ex:d_2_ext}
	The invariant \(d_p(G)\) can change under extension of the base field.
	For example, let \(G\) be the extraspecial \(2\)-group of minus type of order \(32\) over \(\Qq\).
	Then
	\[
		d_2(G)=8 \quad \text{ and } \quad d_2(G_{\Qq(i)})=4.
	\]
	Indeed every nontrivial normal subgroup of \(G\) contains the center of \(G\), so proper quotients of \(G\) are abelian.
	Therefore any nonabelian \(G\)-representation is faithful.
	Over \(\overline{\Qq}\) the group \(G\) has a unique faithful irreducible representation \(W\), of dimension \(4\).
	Since \(G\) is of minus type, the representation \(W\) is quaternionic, hence its Schur index over \(\Qq\) is \(2\).
	In the notation of (\ref{p:V_irred}), the irreducible faithful \(\Qq[G]\)-module \(V\) has \(s=1\), \(w=4\) and \(m=2\), so \(\dim_{\Qq}V=8\) and \(d_2(G)=8\).

	The relevant division algebra is a quaternion algebra over \(\Qq\), split by \(\Qq(i)\); thus \(W\) is defined over \(\Qq(i)\), and \(d_2(G_{\Qq(i)}) = 4\). 
\end{example}

\begin{remark}
	The fixed-point theorem (\ref{cor:fpt}) is sensitive to the base field.

	Indeed, consider for instance the extraspecial \(2\)-group \(G\) of order \(32\) and minus type over \(\Qq\) of (\ref{ex:d_2_ext}).
	Then \(d_2(G)=8\), and so by (\ref{cor:fpt}) no smooth projective \(\Qq\)-variety \(X\) with \(\dim X< d_2(G)\) having an odd Chern number carries a \(G\)-action such that \(X^G=\varnothing\).

	On the other hand \(d_2(G_{\Qq(i)})=4\), and \(G_{\Qq(i)}\) has an irreducible representation \(V\) of dimension \(4\).
	Then \(G_{\Qq(i)}\) acts on the \(\Qq(i)\)-variety \(Y=\Pp(V\oplus 1 \oplus 1 \oplus 1)\) with fixed locus \(\Pp(1 \oplus 1 \oplus 1)=\Pp^2\).
	Let \(X\) be the blow-up of \(Y\) at its fixed locus.
	Then \(G_{\Qq(i)}\) acts on the \(\Qq(i)\)-variety \(X\) with fixed locus \(\Pp_{\Pp^2}(V(1))^{G_{\Qq(i)}}=\varnothing\) (as seen by applying (\ref{lemm:fixed_proj}) over an affine Zariski cover of \(\Pp^2\)).
	In addition \(X\) has an odd Chern number, \(\deg c_3(\Tan_X)^2\), and \(\dim X=6 <8\).
	(A more complicated construction will yield in (\ref{prop:sharp_bound}) an example of dimension \(4\).)
\end{remark}

\begin{para}
	\label{p:exponent}
	Let \(X\) be a smooth, projective, geometrically connected \(k\)-variety with a faithful action of \(G\).
	Assume that \(X\) has a Chern number prime to \(p\).
	Let \(g\in G(k)\), and denote by \(p^s\) its order.
	Let \(H\) be the subgroup of \(G\) generated by \(g\).
	Applying (\ref{prop:d_p_constant}) to the faithful \(H\)-action on \(X\) (here \(d_p(H)=\infty\)), we obtain that \([k(\zeta_{p^s}):k(\zeta_p)] \le \dim X\).
\end{para}

\section{Birational automorphisms}
\label{sect:birat}
\numberwithin{theorem}{section}
\numberwithin{lemma}{section}
\numberwithin{proposition}{section}
\numberwithin{corollary}{section}
\numberwithin{example}{section}
\numberwithin{definition}{section}
\numberwithin{remark}{section}

Here is a generalization of the result of \cite{Xu-rank_bir}:
\begin{proposition}
	\label{prop:birat}
	Assume that the characteristic of \(k\) is zero.
	Let \(X\) be a smooth, projective, geometrically connected \(k\)-variety such that \(\chi(X,\Oc_X)\) is prime to \(p\).
	Assume that \(\dim X < p-1\).
	Then any finite constant \(p\)-group of birational automorphisms of \(X\) is abelian of rank \(\le \dim X\), of exponent \(< p^s\) if \(\zeta_{p^s} \not \in k(\zeta_p)\).
\end{proposition}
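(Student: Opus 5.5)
Let \(G\) be a finite constant \(p\)-group of birational automorphisms of \(X\). The plan has two stages: regularize the birational action to a biregular action on a smooth projective model \(X'\), and then apply the results of \S\ref{sect:noncommutative} to \(X'\). The main input beyond those results is that \(\chi(\Oc)\) is a birational invariant of smooth projective varieties in characteristic zero, and that it equals the Todd genus, which is a Chern number after clearing denominators.

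\emph{Regularization.} In characteristic zero a finite group acting by birational automorphisms on a variety admits a smooth projective model on which it acts regularly. Concretely, take the normalization of the closure of the graph of the \(G\)-action, which is a projective variety birational to \(X\) with a regular \(G\)-action, and then apply equivariant resolution of singularities. This yields a smooth projective geometrically connected \(k\)-variety \(X'\) with a faithful \(G\)-action, birational to \(X\), so that \(\dim X' = \dim X\) and \(\chi(X',\Oc_{X'}) = \chi(X,\Oc_X)\). Working with the constant group \(G\) over \(k\) causes no issue, since the graph construction is defined over \(k\).

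\emph{Producing a Chern number prime to \(p\).} By Hirzebruch--Riemann--Roch, \(\chi(X',\Oc_{X'}) = \deg \mathrm{Td}_n(\Tan_{X'})\), where \(n = \dim X'\). The Todd polynomial \(\mathrm{Td}_n\) has denominators whose prime factors are at most \(n+1\). Since \(n < p-1\), we have \(n+1 < p\), so the denominator \(N\) is prime to \(p\). Then \(N \chi\) is the degree of an integral polynomial in the Chern classes, i.e.\ a Chern number, and it is prime to \(p\). This is the step I expect to need the most care: it requires the standard fact that the denominators of \(\mathrm{Td}_n\) involve only primes \(\le n+1\), via the von Staudt-type description of the Todd denominators. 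The hypothesis \(\dim X < p-1\) is used exactly here.

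\emph{Conclusion.} We have \(d_p(G) \ge p > \dim X'\) by (\ref{rem:d_p_power}), so (\ref{prop:d_p_constant}) applies. It shows that the faithful \(G\)-action factors through \((\Zz/p^{n_1})\times\cdots\times(\Zz/p^{n_r})\) with \(\sum_i [k(\zeta_{p^{n_i}}):k(\zeta_p)] \le \dim X\). Faithfulness makes \(G\) isomorphic to this group, so \(G\) is abelian. Each summand is \(\ge 1\), which gives \(r \le \dim X\), so the rank is at most \(\dim X\).

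For the exponent, suppose \(\zeta_{p^s} \notin k(\zeta_p)\) and some \(n_i \ge s\). Then \(k(\zeta_{p^{n_i}}) \supset k(\zeta_{p^s})\), so \([k(\zeta_{p^{n_i}}):k(\zeta_p)] \ge 2\). Since \(\Gal(k(\zeta_{p^{n_i}})/k(\zeta_p))\) is a \(p\)-group, this degree is in fact a positive power of \(p\), hence \(\ge p > \dim X\). This contradicts the inequality above. Therefore every \(n_i < s\), and the exponent of \(G\) is \(< p^s\).
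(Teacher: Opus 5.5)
Your proposal is correct and follows essentially the same route as the paper. The paper regularizes the action to a smooth projective model with a faithful \(G\)-action by citing Xu's argument, uses birational invariance of \(\chi(\Oc)\) together with the Todd-denominator fact for \(\dim X<p-1\) (citing \cite[Theorem~5.1~(iii)]{invariants}) to get a Chern number prime to \(p\), and then applies (\ref{prop:d_p_constant}) with \(d_p(G)\ge p\), using that each \([k(\zeta_{p^{n_i}}):k(\zeta_p)]\) is a power of \(p\) — you simply spell out the regularization and the Todd-denominator step instead of citing them.
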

\begin{proof}
	As explained in the proof of \cite[Theorem~2.1]{Xu-rank_bir}, given a finite \(p\)-group \(G\) of birational automorphisms of \(X\), we may find a smooth projective \(k\)-variety \(Y\) with a \(G\)-action, such that \(X\) is birational to \(Y\).
	The finite constant \(k\)-group \(G\) acts faithfully on \(Y\), since the triviality of an automorphism of \(Y\) may be checked on a dense open subscheme.
	Then \(\chi(Y,\Oc_Y)=\chi(X,\Oc_X)\) (see e.g.\ \cite[Theorem~3.2.8]{Higherdirect}) is prime to \(p\).
	As \(\dim Y <p-1\), this implies that some Chern number of \(Y\) is prime to \(p\) (see e.g.\ \cite[Theorem~5.1~(iii)]{invariants}).

	Since \(\dim Y <p-1 < p\le d_p(G)\) by (\ref{rem:d_p_power}), we can thus apply (\ref{prop:d_p_constant}) to the \(k\)-variety \(Y\), and use its notation (with \(X\) replaced by \(Y\)).
	Since \(\dim X < p\) and since the numbers \([k(\zeta_{p^{n_i}}):k(\zeta_p)]\) are powers of \(p\), they must all equal \(1\), and their sum \(r\) satisfies \(r \le \dim X\).
	This means that \(\zeta_{p^{n_i}} \in k(\zeta_p)\) for all \(i\).
	If \(\zeta_{p^s}\not \in k(\zeta_p)\), then \(n_i< s\) for each \(i=1,\ldots, r\), which yields the statement.
\end{proof}

\begin{remark}
	When \(X\) is a smooth, projective, geometrically connected \(k\)-variety, the integer \(\chi(X,\Oc_X)\) needs to be multiplied by the denominator of the Todd class, to obtain a Chern number.
	When \(\dim X\ge p-1\), this denominator is divisible by \(p\) and therefore \(\chi(X,\Oc_X)\) can no longer be used to produce a Chern number prime to \(p\).
	In this situation, and for actions of abelian \(p\)-groups, one can still obtain a rank bound using the number \(\chi(X,\Oc_X)\):
	indeed Kollár--Zhuang \cite[Corollary~9]{Kollar-Zhuang} showed that, when a finite abelian \(p\)-group of rank \(r\) acts faithfully on a smooth projective \(k\)-variety \(X\) over a field of characteristic different from \(p\), then
	\[
		r \le v_p(\chi(X,\Oc_X)) + \frac{p}{p-1} \dim X.
	\]
\end{remark}

\begin{remark}
	Assume that \(k\) is algebraically closed.
	Let \(X\) be a smooth, projective, connected \(k\)-variety.
	If \(X\) is rationally connected, then \(\chi(X,\Oc_X)=1\) (the groups \(H^i(X,\Oc_X)\) vanish for \(i>0\) by \cite[Corollary~4.18, a)]{Debarre-book}, and the \(k\)-variety \(X\) is projective and connected hence \(H^0(X,\Oc_X)=k\)), and so the proposition applies to \(X\).
	This recovers the result of \cite{Xu-rank_bir}.
	Let us mention that Xu deduced the commutativity of the action from the existence of a fixed point, based on the fixed-point theorem of \cite{fpt}; here the logic is reversed.\\
\end{remark}

The proposition applies in particular to \(X=\Pp^n\), for \(n < p-1\), whose group of birational automorphisms is the Cremona group \(\Cr_n(k)\).
We thus recover the result of \cite{Xu-rank_bir} asserting that any finite constant \(p\)-subgroup \(G\) of \(\Cr_n(k)\), with \(n < p-1\), is abelian of rank \(\le n\).
But we also get:

\begin{proposition}
	\label{prop:Cremona}
	Let \(k\) be a field of characteristic zero and \(s\in \Nn\) such that \(\zeta_{p^s} \not \in k(\zeta_p)\).
	If \(n< p-1\), then \(\Cr_n(k)\) contains no element of order \(p^s\).
\end{proposition}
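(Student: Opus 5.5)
The plan is to reduce the statement to Proposition~(\ref{prop:birat}) applied to \(X=\Pp^n\). Suppose for contradiction that \(g\in\Cr_n(k)\) has order \(p^s\). Then the cyclic group \(G=\langle g\rangle\simeq \Zz/p^s\), viewed as a finite constant \(p\)-group over \(k\), acts faithfully by birational automorphisms on \(\Pp^n\).

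First we verify the hypotheses of (\ref{prop:birat}). The variety \(\Pp^n\) is smooth, projective and geometrically connected over \(k\). Moreover \(\chi(\Pp^n,\Oc_{\Pp^n})=1\), since \(H^0=k\) and the higher cohomology of the structure sheaf vanishes; in particular it is prime to \(p\). The field \(k\) has characteristic zero and \(\dim \Pp^n=n<p-1\) by hypothesis.

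Proposition~(\ref{prop:birat}) then shows that \(G\) is abelian of exponent \(<p^s\), because \(\zeta_{p^s}\notin k(\zeta_p)\). This contradicts the fact that \(G\) contains the element \(g\) of order \(p^s\). The only point needing care is that the exponent statement there really rules out an element of order exactly \(p^s\). This holds: its proof shows that the \(G\)-action on a smooth projective model \(Y\) factors through \(\prod_i \Zz/p^{n_i}\) with every \(n_i<s\), and the action is faithful. Hence the image of \(g\) has order dividing \(p^{s-1}\), which is impossible.

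There is no real obstacle, since everything is contained in (\ref{prop:birat}). One minor remark is that if \(s=0\) or \(s=1\), the hypothesis \(\zeta_{p^s}\notin k(\zeta_p)\) fails, so the statement is vacuous and no separate treatment is required.
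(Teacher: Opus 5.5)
Your proposal is correct and follows the paper's proof, which applies Proposition~(\ref{prop:birat}) to \(X=\Pp^n\) using \(\chi(\Pp^n,\Oc_{\Pp^n})=1\). The faithfulness check (the action of \(\langle g\rangle\) on the model \(Y\) factors through \(\prod_i \Zz/p^{n_i}\) with all \(n_i<s\)) and your remark that the cases \(s\le 1\) are vacuous are sound additions.
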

\begin{proof}
	As \(\chi(\Pp^n,\Oc_{\Pp^n})=1\), this follows from Proposition~(\ref{prop:birat}).
\end{proof}

\begin{corollary}
	When \(n< p -1\), the group \(\Cr_n(\Qq)\) contains no element of order \(p^2\).
\end{corollary}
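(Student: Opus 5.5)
The corollary should follow from Proposition~(\ref{prop:Cremona}) applied with \(k=\Qq\) and \(s=2\). The only input needed is that \(\zeta_{p^2}\notin \Qq(\zeta_p)\), which is a cyclotomic degree computation. Note that \(\Cr_n(\Qq)\) is exactly the group of birational automorphisms of \(\Pp^n_{\Qq}\), and \(\Qq\) has characteristic zero, so the hypotheses of (\ref{prop:Cremona}) reduce to this single field-theoretic condition.

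To check \(\zeta_{p^2}\notin\Qq(\zeta_p)\), I would use the irreducibility of cyclotomic polynomials over \(\Qq\). It gives \([\Qq(\zeta_{p^m}):\Qq]=\varphi(p^m)=p^{m-1}(p-1)\). Hence
\[
	[\Qq(\zeta_{p^2}):\Qq(\zeta_p)]=\frac{p(p-1)}{p-1}=p>1,
\]
so \(\Qq(\zeta_{p^2})\ne\Qq(\zeta_p)\), and therefore \(\zeta_{p^2}\notin\Qq(\zeta_p)\). This holds for every prime \(p\), including \(p=2\), where \(\Qq(\zeta_2)=\Qq\) and \(\zeta_4=i\notin\Qq\). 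When \(p=2\) or \(p=3\) the condition \(n<p-1\) forces \(n\le 1\), and the statement remains consistent: \(\Cr_0\) is trivial, and \(\mathrm{PGL}_2(\Qq)\) has no element of order \(4\) or \(9\).

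Applying (\ref{prop:Cremona}) with \(s=2\) then shows that \(\Cr_n(\Qq)\) contains no element of order \(p^2\) whenever \(n<p-1\). I expect no real obstacle here; the one step that needs care is remembering that the comparison in (\ref{prop:Cremona}) is with \(k(\zeta_p)\) rather than \(k\), and the degree computation above handles exactly that.
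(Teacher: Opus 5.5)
Your argument is correct and is exactly the intended one: the corollary is Proposition~(\ref{prop:Cremona}) with \(k=\Qq\) and \(s=2\), and the only input needed is \([\Qq(\zeta_{p^2}):\Qq(\zeta_p)]=p>1\). One slip in your consistency aside: \(\mathrm{PGL}_2(\Qq)\) does contain elements of order \(4\) (e.g.\ \(x\mapsto (x-1)/(x+1)\)), but this does not affect the proof, since for \(p=2\) the hypothesis \(n<p-1\) allows only \(n=0\).
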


\begin{remark}
	In case \(n=2\), Serre completely determined in \cite{Serre-Minkowski} the possible orders of the elements of \(\Cr_2(\Qq)\), and in particular had already observed that there are no elements of order \(5^2\) and \(7^2\), and that for \(p\ge 11\) there are no elements of order \(p\) at all.
\end{remark}

\section{Representations of finite \texorpdfstring{\(p\)}{p}-groups}
\label{sect:rep_p}
In this section we collect a few lemmas that will be needed later.
Let \(G\) be a finite group such that \(|G|\) is invertible in \(k\), viewed as a finite constant \(k\)-group.

\begin{lemma}
	\label{lemm:Clifford}
	Assume that \(k\) is algebraically closed, and that \(G\) is a finite \(p\)-group.
	Let \(V\) be an irreducible \(k[G]\)-module such that \(\dim_k V >1\).
	Then there exists a normal subgroup \(H\) of index \(p\) in \(G\), and an irreducible \(k[H]\)-module \(W\) such that \(V=\Ind^G_HW\).
\end{lemma}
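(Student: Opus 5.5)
The plan is the standard argument that irreducible representations of $p$-groups are monomial, via Clifford theory applied to a well-chosen normal subgroup of index $p$.

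\textbf{Reduction to a faithful quotient.} Let $\rho\colon G\to \GL(V)$ be the representation. Since $k$ is algebraically closed and $|G|$ is invertible in $k$, and $\dim V>1$, Schur's lemma shows that $\rho(G)$ is nonabelian. Indeed, an irreducible representation of an abelian group over an algebraically closed field is one-dimensional. Replace $G$ by $\bar G=\rho(G)$. If we find a normal subgroup $\bar H$ of index $p$ in $\bar G$ and an irreducible $\bar W$ with $V=\Ind_{\bar H}^{\bar G}\bar W$, then pulling back gives $H\supset\ker\rho$ of index $p$ in $G$. Moreover $\Ind_H^G W=\Ind_{\bar H}^{\bar G}\bar W$ as $G$-modules, where $W$ is $\bar W$ inflated. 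So we may assume that $V$ is faithful and $G$ is nonabelian.

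\textbf{Choice of $A$.} Pick a maximal abelian normal subgroup $A$ of $G$. We use the classical fact that in a $p$-group, a maximal abelian normal subgroup is self-centralizing, i.e.\ $C_G(A)=A$. The proof goes as follows. If $C_G(A)\ne A$, then $C_G(A)/A$ is a nontrivial normal subgroup of the $p$-group $G/A$. It therefore meets the center of $G/A$ nontrivially, and we take an element of order $p$ there, with preimage $x$. Then $\langle A,x\rangle$ is abelian and normal, contradicting maximality. Since $G$ is nonabelian, $A\ne G$.

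\textbf{Clifford theory.} Restrict $V$ to $A$. It decomposes into isotypic components for characters $\chi\colon A\to k^\times$, and $G$ permutes these components transitively. If only one character occurs, then $A$ acts by scalars, so $\rho(A)$ is central in $\rho(G)$. By faithfulness $A\subset Z(G)$, hence $G=C_G(A)=A$, a contradiction. So the stabilizer $S$ of one isotypic component $V_\chi$ is a proper subgroup containing $A$, and Clifford's theorem gives $V=\Ind_S^G V_\chi$, with $V_\chi$ an irreducible $k[S]$-module. Now choose a maximal subgroup $H$ of $G$ containing $S$. In a $p$-group, maximal subgroups are normal of index $p$. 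By transitivity of induction, $V=\Ind_H^G(\Ind_S^H V_\chi)$. Put $W=\Ind_S^H V_\chi$. Then $W$ is irreducible, since its induction $V$ is irreducible: any proper nonzero submodule of $W$ would induce a proper nonzero submodule of $V$, by dimension count.

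\textbf{Main obstacle.} The delicate step is showing that the restriction to $A$ is non-isotypic, that is, that the stabilizer $S$ is proper. This rests on the self-centralizing property of $A$ together with the reduction to a faithful action. The remaining steps are standard Clifford theory, which is valid because $|G|$ is invertible in $k$ and $k$ is algebraically closed.
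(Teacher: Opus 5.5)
Your proof is correct. It differs from the paper's only in how you show that \(V\) is induced from a proper subgroup.

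The paper cites the theorem that representations of \(p\)-groups over an algebraically closed field of characteristic \(\ne p\) are monomial (Curtis--Reiner (11.2)). This gives \(V=\Ind^G_C N\) with \(\dim_k N=1\), hence \(C\ne G\) since \(\dim_k V>1\). From that point the two arguments coincide: take a maximal subgroup \(H\supset C\), which is normal of index \(p\), use transitivity of induction, and deduce irreducibility of \(W\) from that of \(V\).

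You produce the proper subgroup by hand instead. You pass to the faithful quotient, which is legitimate because inflation commutes with induction from a subgroup containing \(\ker\rho\). There a maximal abelian normal subgroup \(A\) is self-centralizing, so \(V|_A\) cannot be isotypic. Clifford's theorem then gives \(V=\Ind^G_S V_\chi\), where \(S\) is the stabilizer of an isotypic component and is proper.

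This is one step of the usual inductive proof of monomiality for \(p\)-groups. The lemma only needs \emph{some} proper subgroup, not a one-dimensional inducing module, so you skip the induction on \(|G|\) entirely. The paper's proof is shorter because it relies on a textbook theorem. Yours is self-contained, using only Clifford's theorem and an elementary fact about \(p\)-groups, at the cost of the reduction to the faithful case.
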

\begin{proof}
	Since representations of \(p\)-groups over an algebraically closed field of characteristic \(\ne p\) are monomial \cite[(11.2)]{Curtis-Reiner-Methods-I}, there exist a subgroup \(C\subset G\) and a \(k[C]\)-module \(N\) such that \(V=\Ind^G_CN\) and \(\dim_kN=1\).
	Since \(\dim_k V >1\) we have \(C \ne G\);
	let \(H\) be a maximal subgroup of \(G\) containing \(C\), and set \(W=\Ind^H_C N\).
	Then \(V=\Ind^G_H W\) by transitivity of the induction, and the irreducibility of the \(k[G]\)-module \(V\) implies that of the \(k[H]\)-module \(W\), as \(k[H]\) is semisimple.
	The subgroup \(H\) is normal of index \(p\) in \(G\), being a maximal subgroup in a \(p\)-group.
\end{proof}

\begin{para}
	\label{p:ab_def}
	Let us denote by \([G,G]\) the commutator subgroup of \(G\).
	Let \(A\) be a commutative \(k\)-algebra.
	For an \(A[G]\)-module \(M\), we will denote by \(M^{[G,G]} \subset M\) the \(A[G]\)-submodule of \([G,G]\)-invariants.

	Consider the element (recall that the characteristic of \(k\) does not divide \(|G|\))
	\[
		\gamma = \Big|[G,G]\Big|^{-1} \Big(\sum_{g \in [G,G]} g\Big) \in A[G].
	\]
	This is a central idempotent of \(A[G]\), since \([G,G]\) is normal in \(G\).
	Then \(M^{[G,G]}=\gamma M\), and thus \(M^{[G,G]}\) is a direct summand of \(M\) as an \(A[G]\)-module.
	Therefore if \(A \to B\) is a morphism of \(k\)-algebras, we have \((M\otimes_A B)^{[G,G]}=M^{[G,G]} \otimes_A B\).
\end{para}

\begin{lemma}
	\label{lemm:inv_ab}
	Let \(A\) be a commutative \(k\)-algebra, and \(M\) an \(A[G]\)-module, invertible as an \(A\)-module.
	Then \(M^{[G,G]}=M\).
\end{lemma}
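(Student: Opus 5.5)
The statement is local on \(\Spec A\), so I will prove it by reducing to the case of a free module of rank one and then observing that the group acts through a commutative group of scalars. The reduction uses (\ref{p:ab_def}): the submodule \(M^{[G,G]}=\gamma M\) is a direct summand compatible with base change.

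First I reduce to the local case. Write \(N = M/M^{[G,G]} = (1-\gamma)M\). This is a direct summand of \(M\), hence a finitely generated projective \(A\)-module. Its formation commutes with any base change \(A\to B\), by the last sentence of (\ref{p:ab_def}). To show \(N=0\), it suffices to show \(N\otimes_A A_{\mathfrak p}=0\) for every prime \(\mathfrak p\) of \(A\). We may therefore replace \(A\) by the local ring \(A_{\mathfrak p}\) and \(M\) by \(M_{\mathfrak p}\). The hypothesis is preserved, and now \(M\simeq A\) is free of rank one.

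In the free rank-one case, each \(g\in G\) acts by an \(A\)-linear automorphism of \(A\). Such an automorphism is multiplication by a unit \(u_g\in A^\times\). This gives a group homomorphism \(G\to \operatorname{Aut}_A(M)=A^\times\) into a commutative group. It therefore kills \([G,G]\), so every element of \([G,G]\) acts trivially on \(M\). Hence \(\gamma\) acts as the identity, and \(M^{[G,G]}=\gamma M=M\).

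I do not expect any step to be a real obstacle. The only point requiring care is that \(G\) acts \(A\)-linearly, which holds because \(M\) is an \(A[G]\)-module with \(A\) commutative and central in \(A[G]\). With that in hand, \(\operatorname{End}_A(M)=A\) for an invertible module \(M\). This gives an even shorter argument that avoids localization altogether. The map \(A\to\operatorname{End}_A(M)\) is an isomorphism for invertible \(M\), since this can be checked locally. So \(G\) maps into the commutative group \(A^\times\), commutators act trivially, and \(\gamma=1\) on \(M\).
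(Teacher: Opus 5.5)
Your proposal is correct, and the short argument in your last paragraph is exactly the paper's proof: since \(M\) is invertible, \(\End_A(M)=A\), so \(G\) acts through the commutative group \(A^{\times}\) and hence \([G,G]\) acts trivially on \(M\). Your localization argument is also valid, but it is an unnecessary detour, since it only checks the same fact (that \(G\) acts by scalars) one prime at a time.
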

\begin{proof}
	Since \(M\) is invertible we have \(\End_A(M)=A\), a commutative ring.
	The morphism \(G \to \Aut_A(M)=A^{\times}\) thus factors through the abelianization of \(G\), meaning that the group \([G,G]\) acts trivially on \(M\).
\end{proof}

\begin{para}
	\label{p:ab_dim_one}
	For a \(k[G]\)-module \(V\), we have \(V^{[G,G]}=0\) if and only if \(V_{\overline{k}}\) has no \(\overline{k}[G]\)-submodule of dimension one over \(\overline{k}\).

	Indeed one implication follows from (\ref{lemm:inv_ab}) and the base-change formula of (\ref{p:ab_def}).
	On the other hand, any \(\overline{k}[G]\)-module on which \(G\) acts through its abelianization \(G/[G,G]\) splits as a direct sum of summands of dimension \(1\).
\end{para}

\begin{lemma}
	\label{lemm:fixed_proj}
	Let \(A\) be a commutative finitely generated \(k\)-algebra, and \(V\) an \(A[G]\)-module which is locally free of finite rank as an \(A\)-module.
	Then the \(A\)-module \(V^{[G,G]}\) is locally free of finite rank, and \(\Pp_A(V)^G=\Pp_A(V^{[G,G]})^G\).
\end{lemma}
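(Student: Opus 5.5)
The first assertion is formal from (\ref{p:ab_def}). Since \(\gamma\) is a central idempotent of \(A[G]\), we get a decomposition \(V = \gamma V \oplus (1-\gamma)V\) of \(A[G]\)-modules, with \(V^{[G,G]}=\gamma V\). So \(V^{[G,G]}\) is an \(A\)-direct summand of a locally free \(A\)-module of finite rank. As \(A\) is noetherian, such a summand is finitely generated projective, hence locally free of finite rank. The same holds for the complement \(V' = (1-\gamma)V\). Thus \(\Pp_A(V^{[G,G]})\) is a closed subscheme of \(\Pp_A(V)\), and it is \(G\)-stable. This gives the inclusion \(\Pp_A(V^{[G,G]})^G \subset \Pp_A(V)^G\) at once.

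For the reverse inclusion I would argue functorially on points, since \(G\) is finite constant and the fixed locus represents the functor of \(G\)-invariant points. Let \(B\) be an \(A\)-algebra, and take a \(B\)-point of \(\Pp_A(V)\). It is an invertible \(B\)-module quotient \(V\otimes_A B \to L\), or dually a line subbundle, depending on the convention for \(\Pp\). If the point is \(G\)-fixed, then \(L\) inherits a \(B[G]\)-module structure making the quotient map \(G\)-equivariant. This uses that the kernel is \(G\)-stable, so \(G\) acts on the quotient. By (\ref{lemm:inv_ab}), \([G,G]\) acts trivially on \(L\), so \((1-\gamma)\) kills \(L\). Hence the composite \(V'\otimes_A B=(1-\gamma)(V\otimes_A B) \to L\) vanishes. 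Here I use (\ref{p:ab_def}) for the base change \((V\otimes_A B)^{[G,G]} = V^{[G,G]}\otimes_A B\).

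So the surjection factors through \(V^{[G,G]}\otimes_A B \to L\), which is still surjective. This means the point lies in the closed subscheme \(\Pp_A(V^{[G,G]})\), and being \(G\)-fixed there, it lies in \(\Pp_A(V^{[G,G]})^G\). Since both sides are closed subschemes, equality of their functors of points gives equality as schemes. If the paper's convention for \(\Pp\) is lines rather than quotients, the same argument runs with the line subbundle \(L\subset V\otimes B\) and the projection to \(V'\otimes B\), which is \((1-\gamma)\) applied to \(L\) and hence \(0\).

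The step needing the most care is constructing the \(G\)-action on \(L\) from the fixedness of the point. Fixedness only says that \(g^*\) of the quotient agrees with the quotient up to a unique isomorphism. One must check that these isomorphisms satisfy the cocycle condition, so that they assemble into an honest \(B[G]\)-module structure and not just a projective one. I would do this by identifying \(L\) with \((V\otimes B)/K\) for the \(G\)-stable kernel \(K\). The action is then simply induced from \(V\otimes B\), and no cocycle issue arises.
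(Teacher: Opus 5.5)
Your proposal is correct and is essentially the paper's argument. The paper uses the convention that an \(R\)-point of \(\Pp_A(V)\) is an invertible direct summand \(M\subset V\otimes_A R\), which is \(G\)-fixed exactly when \(M\) is an \(R[G]\)-submodule; by (\ref{lemm:inv_ab}) and the base change of (\ref{p:ab_def}) this gives \(M=M^{[G,G]}\subset V^{[G,G]}\otimes_A R\), with \(M\) still a direct summand there. Your quotient version, your subbundle remark, and your handling of the action via the \(G\)-stable kernel or line are the same argument; the one detail you leave implicit, that \(L\) remains a direct summand of \(V^{[G,G]}\otimes_A B\), is immediate and is stated explicitly in the paper.
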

\begin{proof}
	Since the \(A\)-module \(V^{[G,G]}\) is a direct summand of \(V\), it is locally free of finite rank.
	Let \(R\) be an \(A\)-algebra.
	An \(R\)-point of \(\Pp_A(V)\) is a direct summand \(M\) of the \(R\)-module \(V \otimes_A R\), which is invertible as an \(R\)-module.
	The point lies in \(\Pp_A(V)^G\) if and only if \(M\) is an \(R[G]\)-submodule of \(V \otimes_A R\).
	By (\ref{lemm:inv_ab}) we have
	\[
		M=M^{[G,G]}\subset (V \otimes_A R)^{[G,G]}=V^{[G,G]} \otimes_A R.
	\]
	Moreover, since the \(R\)-module \(M\) is a direct summand of \(V \otimes_AR\), it is a direct summand of \(V^{[G,G]} \otimes_A R\).
	This means that the \(R\)-point of \(\Pp_A(V)^G\) factors through \(\Pp_A(V^{[G,G]})\).
	This proves one inclusion, the other is clear as \(\Pp_A(V^{[G,G]})\) is a \(G\)-invariant closed subscheme of \(\Pp_A(V)\).
\end{proof}

Finally, we will need to perform a standard analysis of irreducible representations of \(p\)-groups, so let us fix the setting:
\begin{para}
	\label{p:V_irred}
	Assume that \(k\) contains a root of unity \(\zeta_p\) of order \(p\), and that \(G\) is a \(p\)-group.
	Let \(V\) be an irreducible \(k[G]\)-module.
	Let \(D=\End_{k[G]}(V)\).
	This is a division \(k\)-algebra; its center is a field \(K\) containing \(k\).

	Let \(e=p^n\) be the exponent of \(G\), and \(F\) be a splitting field of the polynomial \(T^e-1\in k[T]\), a separable polynomial since the characteristic of \(k\) is not \(p\).
	The extension \(F/k\) is Galois.
	The fact that \(k\) contains a root of unity of order \(p\) implies that \([F:k]\) divides \(p^{\max(n-1,0)}\), hence is a power of \(p\).

	Since \(F\) contains enough roots of unity, it is a splitting field of \(G\) by \cite[(17.1)]{Curtis-Reiner-Methods-I};
	therefore the \(F[G]\)-module \(V_F\) splits as a direct sum of absolutely irreducible \(F[G]\)-modules.
	This implies that the center of \(\End_{F[G]}(V_F)\) is a split étale \(F\)-algebra.
	Since it is isomorphic to \(K\otimes_k F\), we conclude that \(K/k\) may be embedded as subextension of \(F/k\).
	Since \(F/k\) is Galois abelian of degree a power of \(p\), the same is true for \(K/k\).

	Let \(W\) be an irreducible summand of the \(F[G]\)-module \(V_F\).
	The sum of its \(\Gal(F/k)\)-conjugates in \(V_F\) is a \(\Gal(F/k)\)-invariant \(F[G]\)-submodule of \(V_F\), which is thus of the form \(U_F\) for a \(k[G]\)-submodule \(U\) of \(V\);
	the irreducibility of \(V\) forces \(U=V\), and so \(U_F=V_F\).
	This implies that all irreducible summands of the \(F[G]\)-module \(V_F\) are isomorphic to some \(\Gal(F/k)\)-conjugate of \(W\); in addition they appear with a common multiplicity \(m\), by symmetry under the \(\Gal(F/k)\)-action which permutes the isotypic components transitively.
	Thus there exist pairwise nonisomorphic irreducible \(F[G]\)-modules \(W_1,\ldots, W_s\), conjugate to one another under the \(\Gal(F/k)\)-action, and such that
	\begin{equation}
		\label{eq:dim_V_F}
		V_F \simeq (W_1 \oplus \ldots \oplus W_s)^{\oplus m}.
	\end{equation}
	Since \(W_i\) is an absolutely irreducible \(F[G]\)-module, we have \(\End_{F[G]}(W_i)=F\), for each \(i=1,\ldots,s\).
	Therefore \(D \otimes_k F\simeq \End_{F[G]}(V_F) \simeq (M_m(F))^s\).
	Its center \(K\otimes_k F\) is thus isomorphic to \(F^s\), and so \(s=[K:k]\).
	Taking the dimensions over \(F\), we also obtain that \(\dim_K D=m^2\), so \(m\) is the Schur index of the central division \(K\)-algebra \(D\).
	Since the \(F[G]\)-modules \(W_i\) are absolutely irreducible, their common dimension \(w\) is a divisor of \(|G|\) (see e.g.\ \cite[VIII, \S21, n$^\circ$12, Corollaire~2, p.410]{Bou-A-8}), and so \(w\) is a power of \(p\).
	As \(V\simeq D^{\oplus r}\) as a \(D\)-module for some \(r\), we have \(\dim_k V=rsm^2\).
	But \(\dim_k V=\dim_F V_F=swm\) by \eqref{eq:dim_V_F}.
	We deduce that \(rm=w\), and so \(m\), as well as \(\dim_k V\), are powers of \(p\).
\end{para}

\section{Derived length}
\label{sect:derived}
We still assume that \(k\) is a field of characteristic different from \(p\).

When \(G\) is a finite constant \(p\)-group, since \(d_p(G) \ge p\) (see (\ref{rem:d_p_power})), it follows from (\ref{prop:d_p_constant}) that \(G\) acts through an abelian quotient on any smooth, projective, geometrically connected \(k\)-variety of dimension \(<p\) having a Chern number prime to \(p\).
In this section, we establish the higher analogs of this statement, where the dimensional bound \(p\) is replaced with \(p^m\).\\

\begin{para}
	When \(G\) is a finite constant \(k\)-group, let us define inductively \(G^{(0)}=G\), and \(G^{(n+1)}=[G^{(n)},G^{(n)}]\).
	The \emph{derived length} of \(G\) is the least \(n\) such that \(G^{(n)}=1\).
\end{para}

\begin{lemma}
	\label{lemm:derived_length}
	Let \(G\) be a finite \(p\)-group, and \(V\) a \(k[G]\)-module.
	If \(\dim_k V < p^m\), then \(G^{(m)}\) acts trivially on \(V\).
\end{lemma}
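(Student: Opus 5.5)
Since \(|G|\) is invertible in \(k\), the module \(V\) is semisimple. The subgroup \(G^{(m)}\) acts trivially on a direct sum exactly when it acts trivially on each summand. Moreover, the dimension of each summand is at most \(\dim_k V\). So we may assume \(V\) irreducible. Triviality of the action of \(G^{(m)}\) can be checked after extending scalars, so we may also pass to \(\overline{k}\). Then \(V_{\overline{k}}\) is semisimple with summands of dimension \(\le \dim_k V\), and it suffices to treat irreducible \(\overline{k}[G]\)-modules \(V\) with \(\dim V<p^m\). Since such a module has dimension a power of \(p\) (a divisor of \(|G|\), as recalled in (\ref{p:V_irred})), the hypothesis becomes \(\dim V\le p^{m-1}\).

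We argue by induction on \(m\), and for fixed \(m\) by induction on \(|G|\). For \(m=0\) the hypothesis \(\dim V<1\) forces \(V=0\), so there is nothing to prove. If \(\dim V=1\), then \(G\) acts through \(G/[G,G]\) by (\ref{lemm:inv_ab}), so \(G^{(1)}\subset G^{(m)}\) acts trivially whenever \(m\ge1\).

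Now suppose \(\dim V>1\). By (\ref{lemm:Clifford}) we can write \(V=\Ind^G_H W\) with \(H\) normal of index \(p\) and \(W\) irreducible over \(H\), so \(\dim W=\dim V/p\le p^{m-2}<p^{m-1}\). The induction hypothesis on \(m\) applies to \(H\) and the restriction \(\mathrm{Res}_H V\), which is the direct sum of the conjugates \(gW\), each of dimension \(\dim W<p^{m-1}\). Hence \(H^{(m-1)}\) acts trivially on \(V\). On the other hand, \(G/H\) is cyclic of order \(p\), hence abelian, so \(G^{(1)}\subset H\). It follows that \(G^{(m)}=(G^{(1)})^{(m-1)}\subset H^{(m-1)}\), which acts trivially. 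This completes the induction, and the only genuine input is the monomiality lemma (\ref{lemm:Clifford}).

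The main obstacle is the first reduction, namely making sure that dimensions do not grow when we pass to \(\overline{k}\) and that the power-of-\(p\) dimension is available. Both points are handled by (\ref{p:V_irred}) together with semisimplicity. In fact the power-of-\(p\) refinement is not needed: \(\dim W=\dim V/p<p^{m-1}\) already follows directly from \(\dim V<p^m\).
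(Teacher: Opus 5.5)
Your proof is correct and is essentially the paper's argument: induction on \(m\), reduction to an irreducible module over \(\overline{k}\), the dimension-one case via (\ref{lemm:inv_ab}), and otherwise (\ref{lemm:Clifford}) with the induction hypothesis applied to each conjugate \(gW\), followed by \(G^{(m)}=(G^{(1)})^{(m-1)}\subset H^{(m-1)}\). Two cosmetic points: in the dimension-one case the inclusion is written backwards and should be \(G^{(m)}\subset G^{(1)}\), and neither the auxiliary induction on \(|G|\) nor the power-of-\(p\) refinement is actually used.
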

\begin{proof}
	We proceed by induction on \(m\).
	The case \(m=0\) forces \(V=0\), so let us assume that \(m>0\).
	We may extend scalars to an algebraic closure, and assume that \(k\) is algebraically closed.
	The \(k[G]\)-module \(V\) splits as a direct sum of irreducible \(k[G]\)-modules, and we may thus assume that \(V\) is irreducible.

	If \(\dim_k V=1\), then \(G^{(1)}=[G,G]\) acts trivially on \(V\) by (\ref{lemm:inv_ab}), hence so does \(G^{(m)}\subset G^{(1)}\).

	Assume that \(\dim_k V>1\).
	By (\ref{lemm:Clifford}), there exists a normal subgroup \(H\) of \(G\) of index \(p\) and a \(k[H]\)-module \(W\) such that \(V=\Ind^G_HW\).
	The \(k[H]\)-module \(V\) decomposes as a direct sum of the modules \(gW\) where \(g\in G/H\).
	Applying the induction hypothesis to each \(gW\), whose dimension as a \(k\)-vector space is \((\dim_kV)/p<p^{m-1}\), we obtain that \(H^{(m-1)}\) acts trivially on each \(gW\), and thus on \(V\).
	Since \(G/H\) has order \(p\), it is abelian, so that \(G^{(1)} \subset H\).
	Then \(G^{(m)}=(G^{(1)})^{(m-1)} \subset H^{(m-1)}\) acts trivially on \(V\).
\end{proof}

\begin{proposition}
	\label{prop:derived}
	Assume that \(k\) contains a root of unity of order \(p\).
	Let \(X\) be a smooth projective \(k\)-variety with an action of a finite constant \(p\)-group \(G\).
	Assume that \(X\) is \(G\)-connected, and has a Chern number prime to \(p\).
	If \(\dim X <p^m\), then \(G\) acts on \(X\) through a quotient of derived length \(\le m\).
\end{proposition}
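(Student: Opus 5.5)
The plan is to combine Corollary~(\ref{cor:fpt_disc}) with Lemma~(\ref{lemm:derived_length}). Set \(n=\dim X\) and let \(d\) be the number of geometric connected components of \(X\).

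First, I will apply (\ref{cor:fpt_disc}) to the \(G\)-connected variety \(X\). This is possible because \(\zeta_p\in k\) and some Chern number of \(X\) is prime to \(p\). It tells us that \(d\) is prime to \(p\), and that there is a finite separable extension \(K/k\) of degree \(d\) such that the \(G\)-action on \(X\) factors through a homomorphism \(\rho\colon G\to \GL_n(K)\). Concretely, the action factors through \(G\to R_{K/k}Q\), where \(Q\) is a quotient of \(G_K\) embedded in \(\GL_n\) over \(K\). The image \(\rho(G)\) is a quotient \(G/N\) of \(G\) through which the action factors. It therefore suffices to show that \(G/N\) has derived length \(\le m\), that is, \(G^{(m)}\subset N=\ker \rho\).

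Next, I will view \(K^n\) as a \(K[G]\)-module via \(\rho\). Its dimension over \(K\) is \(n<p^m\), and the characteristic of \(K\) is different from \(p\) because \(\zeta_p\in k\). Lemma~(\ref{lemm:derived_length}), applied over the field \(K\) to the finite \(p\)-group \(G\), then shows that \(G^{(m)}\) acts trivially on \(K^n\). So \(G^{(m)}\subset\ker\rho\). Consequently \((G/\ker\rho)^{(m)}\), which is the image of \(G^{(m)}\), is trivial, and the action factors through a quotient of derived length \(\le m\).

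The only point needing care is that Lemma~(\ref{lemm:derived_length}) is stated over \(k\), while we use it over \(K\). This is harmless, since the lemma holds over any field of characteristic different from \(p\). One should also check that ``acts through a subgroup of \(\GL_n(K)\)'' really means the action factors through the image of a group homomorphism \(G\to\GL_n(K)\). This is how (\ref{cor:fpt_disc}) is proved, namely by composing \(G\to R_{K/k}Q\subset R_{K/k}\GL_n\) and taking \(k\)-points. I expect no further obstacle.
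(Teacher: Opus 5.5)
Your proposal is correct and follows the paper's proof exactly: apply (\ref{cor:fpt_disc}) to get a homomorphism \(G \to \GL_n(K)\) through which the action factors, then apply (\ref{lemm:derived_length}) over \(K\) (whose characteristic is still different from \(p\)) to conclude that \(G^{(m)}\) lies in its kernel. Your extra remarks, on using the lemma over \(K\) and on what ``acts through a subgroup of \(\GL_n(K)\)'' means, are accurate and only make explicit what the paper leaves implicit.
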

\begin{proof}
	By (\ref{cor:fpt_disc}) the group \(G\) acts on \(X\) through a subgroup of \(\GL_n(K)\), for \(n=\dim X\), and a field extension \(K/k\).
	The statement then follows from (\ref{lemm:derived_length}), applied over the field \(K\).
\end{proof}

As in \S\ref{sect:noncommutative}, we could improve the bound \(p^m\) of (\ref{prop:derived}) into one which depends on the group \(G\).

Let us now state the general result.
Given a class \(\Pc\) of finite \(p\)-groups, we define the following invariant of a finite étale \(k\)-group \(G\)
\[
	d_p^{\Pc}(G)= \min \{\ed_p(Q), \text{ for \(Q\) a quotient of \(G\) with \(Q(\overline{k}) \not \in \Pc\)}\}.
\]

\begin{proposition}
	Let \(X\) be a smooth projective \(k\)-variety with an action of a finite constant \(p\)-group \(G\).
	Assume that \(X\) is \(G\)-connected, and has a Chern number prime to \(p\).
	If \(\dim X <d_p^{\Pc}(G)\), then \(G\) acts on \(X\) through a quotient which lies in the class \(\Pc\).
\end{proposition}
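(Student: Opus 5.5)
The plan is to mimic the proof of (\ref{cor:fpt}), replacing ``abelian'' by ``lies in \(\Pc\)''. First I would apply (\ref{prop:fpt}) to the \(G\)-connected variety \(X\), which has a Chern number prime to \(p\). This gives a separable field extension \(K/k\) of degree \(d\) prime to \(p\), and a quotient \(Q\) of \(G_K\) with \(\ed_p(Q)\le \dim X\), such that \(G\) acts on \(X\) through \(R_{K/k}(Q)\). Moreover, the proof of (\ref{prop:fpt}) shows that \(X\) is connected, that \(K=H^0(X,\Oc_X)\), and that \(X\) is a geometrically connected \(K\)-variety with a \(G_K\)-action factoring through \(Q\).

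Next I would compare \(\ed_p(Q)\) with \(d_p^{\Pc}(G)\). Since \(G\) is finite constant and we may fix an embedding \(K\subset\overline{k}\), every quotient of \(G_K\) has the form \(Q_0\otimes_k K\) for a quotient \(Q_0\) of \(G\), with \(Q(\overline{k})=Q_0(\overline{k})\). By \cite[Proposition~1.5~(2)]{Merkurjev-essential_contemp}, since \([K:k]\) is prime to \(p\), we have \(\ed_p(Q_0)=\ed_p(Q)\le \dim X<d_p^{\Pc}(G)\). This is the analogue of (\ref{p:d_p_descend}). By the definition of \(d_p^{\Pc}(G)\), it follows that \(Q_0(\overline{k})\in\Pc\).

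Finally I would conclude that the \(G\)-action on \(X\) factors through \(Q_0\). The \(G_K\)-action on the \(K\)-variety \(X\) factors through \(Q=(Q_0)_K\), meaning that the kernel \(N\) of \(G\to Q_0\) (a normal subgroup of the constant group \(G\)) acts trivially on \(X\) as a \(K\)-variety. The \(k\)-variety structure is obtained by composing with \(\Spec K\to\Spec k\), and the action of \(G\) on \(X\) is the same morphism whether \(X\) is viewed over \(k\) or over \(K\). Hence \(N\) acts trivially on the \(k\)-variety \(X\), and \(G\) acts through \(G/N\simeq Q_0(\overline{k})\), which lies in \(\Pc\).

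The main obstacle is this last descent step. One cannot simply pass through \(R_{K/k}(Q)\), whose group of geometric points is \(Q(\overline{k})^d\), because \(\Pc\) need not be closed under products or subgroups. That is why I would track the actual kernel, using that \(G\) is constant, so that the quotient \(Q\) of \(G_K\) comes from an honest quotient of the abstract group \(G\). I would check that the proof of (\ref{th:quotient}) produces \(Q=G_K/H\), where \(H\) is the kernel of \(G_K\to\Aut_K(X)\), and that this \(H\) is constant and equal to \(N_K\).
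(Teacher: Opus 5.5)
Your proposal is correct and follows the paper's proof: apply (\ref{prop:fpt}), descend the quotient \(Q\) of \(G_K\) to a quotient of \(G\) via the argument of (\ref{p:d_p_descend}) and \cite[Proposition~1.5~(2)]{Merkurjev-essential_contemp} to get that it lies in \(\Pc\), and conclude. The paper phrases your kernel-tracking step as saying that \(G\) acts through \((R_{K/k}Q)(k)=Q(K)\), a quotient of \(G(k)=G(K)\); because this uses \(k\)-points rather than geometric points of the Weil restriction, the problem with \(Q(\overline{k})^d\) that you flag does not arise on that route either.
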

\begin{proof}
	Let us apply (\ref{prop:fpt}) and use its notation.
	So \(Q\) is a constant \(K\)-group.
	We fix an embedding \(K \subset \overline{k}\) over \(k\).
	Since \([K:k]\) is prime to \(p\), using the argument of (\ref{p:d_p_descend}) we see that \(d_{p}^{\Pc}(G)= d_p^{\Pc}(G_K)\).
	Since \(\ed_p(Q) \le \dim X < d_p^{\Pc}(G)\), we have \(Q(K) \in \Pc\).
	Thus \(G\)-action acts on \(X\) through the group \((R_{K/k}Q)(k)=Q(K)\), which is a quotient of \(G(k)=G(K)\).
\end{proof}

\section{Constructing fixed-point-free actions}
\label{sect:construction}

We conclude with an analysis of the sharpness of the dimensional bound in (\ref{cor:fpt}).
In this section \(G\) denotes a finite \(p\)-group, which will be viewed as a finite constant \(p\)-group over \(k\).
We assume that the characteristic of \(k\) is not \(p\).

We call a \(G\)-action on a \(k\)-variety \(X\) \emph{fixed-point-free} if the scheme \(X^G\) is empty.
The aim of this section is to prove the following:
\begin{proposition}
	\label{prop:sharp_bound}
	Assume that \(k\) contains a root of unity of order \(p\).
	Let \(G\) be a finite constant \(p\)-group over \(k\).
	The minimum of the dimensions of the smooth projective \(k\)-varieties with a fixed-point-free \(G\)-action and a Chern number prime to \(p\) is
	\begin{itemize}
		\item[---] \(4\) if \(p=2\) and \(d_{p}(G)=2\),
		\item[---] \(d_p(G)\) otherwise.
	\end{itemize}
	(No such variety exist when \(d_p(G)=\infty\), i.e.\ \(G\) is abelian.)
\end{proposition}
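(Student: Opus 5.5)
The lower bound comes from (\ref{cor:fpt}), and the upper bound from explicit constructions. For the lower bound, let \(X\) be smooth projective with a Chern number prime to \(p\) and \(X^G=\varnothing\). By (\ref{p:d_p_descend}) we have \(d_{(p)}(G)=d_p(G)\), so (\ref{cor:fpt}) gives \(\dim X\ge d_p(G)\). In particular no such \(X\) exists when \(G\) is abelian.

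In the exceptional case \(p=2\), \(d_2(G)=2\), we must also rule out \(\dim X\in\{2,3\}\). I would first reduce to \(X\) being \(G\)-connected and geometrically connected over \(K=H^0(X,\Oc_X)\), as in (\ref{prop:fpt}). By (\ref{prop:fpt}), \(G\) acts through a quotient \(Q\) with \(\ed_2(Q)\le 3\) and \(Q\subset\GL_3(K)\) by (\ref{cor:fpt_disc}). Any nonabelian irreducible constituent has dimension a power of \(2\) by (\ref{p:V_irred}), so it is \(2\)-dimensional, and \(Q\) has a faithful representation \(2\oplus 1\) or \(2\).

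To produce a fixed point I would then combine the Chern-number obstruction in dimension \(\le 3\) for \(p=2\) with the structure of \(Q\). Concretely, I would use the \(K\)-theoretic fixed-point results of \cite{fpt} for the abelian index-\(2\) subgroup \(H\) (which exists since a \(2\)-dimensional irreducible is induced, (\ref{lemm:Clifford})). By \cite[(4.4)]{fpt}, \(X^H\) is nonempty, and I expect it to have odd "Euler-type" degree. Then \(Q/H\cong\Zz/2\) acts on \(X^H\), which has dimension \(\le 3\). An odd-degree or low-dimensional parity argument should give a fixed point of \(Q/H\) on \(X^H\). I expect this to be the main obstacle: one must show that odd Chern numbers in dimensions \(2,3\) force fixed points for every \(2\)-group, which is not captured by \(\ed_2\) alone. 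My first attempt would be to identify the relevant Chern numbers (\(c_1c_2\), \(c_2\), \(c_1^2\)) with \(\chi(\Oc_X)\)-type invariants and run the argument of \cite[(1.1.2)]{fpt} with \(p=2\).

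For the upper bound, take a nonabelian irreducible \(k[G]\)-module \(V\) of dimension \(d=d_p(G)\), which exists by (\ref{prop:d_p}). Since \(V\) is irreducible and nonabelian, \(V^{[G,G]}=0\). Hence by (\ref{lemm:fixed_proj}), \(\Pp(V\oplus 1)^G\) is the single point \([0:1]\). Blowing up this point, the exceptional divisor \(\Pp(V)\) has no fixed points, again by (\ref{lemm:fixed_proj}). The blow-up \(X\) has \(\deg c_d(\Tan_X)=\chi_{\mathrm{top}}=2d\), which is prime to \(p\) when \(p\) is odd; this settles odd \(p\).

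For \(p=2\) and \(d\ge 4\), following the introduction, I would instead use a Weil restriction. Choose, via (\ref{lemm:Clifford}) and the Schur-index analysis of (\ref{p:V_irred}), a quadratic étale algebra \(L/k\) and an index-\(2\) subgroup \(H\) with \(V\) induced from an \(L\)-linear \(H\)-representation \(W\) of dimension \(d/2\). Then I would take \(X=R_{L/k}(Z)\), where \(Z\) is a suitable \(H\)-equivariant compactification of \(W\), for instance a blow-up of \(\Pp(W\oplus 1)\). The group \(G\) acts on \(X\), with \(G/H\) interchanging the two geometric factors. Geometrically \(X\simeq Z\times Z\), so its Chern numbers are products of those of \(Z\). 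With \(d/2\) even I would pick \(Z\) having an odd Chern number, such as \(\Pp^{d/2}\) twisted appropriately.

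Fixed-point-freeness would follow because a \(G\)-fixed point gives an \(H\)-fixed point on \(Z\) compatible with the swap, reducing to the vanishing of \(W^{[H,H]}\)-type fixed loci. For the exceptional case \(d=2\), the same construction with \(d\) replaced by \(4\) (using \(V\oplus V\) or \(R_{L/k}\Pp^2\)) should give dimension \(4\).
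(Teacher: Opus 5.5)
Your lower bound via (\ref{cor:fpt}) and (\ref{p:d_p_descend}) is the paper's. Each of the remaining steps, however, has a problem.

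\emph{Dimensions $2$ and $3$ for $p=2$.} In the case $p=2$, $d_2(G)=2$, your exclusion of $\dim X\in\{2,3\}$ is not a proof. Nothing shows that $X^H$ (possibly disconnected, of mixed dimension) carries an odd Chern number, so \cite[(4.4)]{fpt} cannot be re-applied to the $Q/H$-action on it, and the parity argument is never specified. What is needed is exactly \cite[(1.1.4)]{fpt}: smooth projective varieties of dimension $<4$ with an odd Chern number and an action of a constant $2$-group have fixed points. The paper simply cites this; it does not come out of the $\ed_2$ argument.

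\emph{Odd $p$.} Your construction is the paper's, but the claim that $\deg c_d(\Tan_X)=2d$ is prime to $p$ is false. By (\ref{rem:d_p_power}), $d=d_p(G)$ is a nontrivial power of $p$, so $p\mid 2d$. This cannot be avoided: $\chi(X)\equiv\chi(X^G)\pmod p$ for a $p$-group, so any fixed-point-free example has Euler number divisible by $p$. You need another Chern number, e.g.\ $s_d=\deg\sum_i x_i^d$, which equals $1+(-1)^{d-1}=2$ for the blow-up of $\Pp^d$ at a point when $d$ is odd. The paper cites \cite[(4.5)]{fpt} here.

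\emph{The case $p=2$.} This is the main gap: you blow up and Weil-restrict in the wrong order.
\begin{itemize}
\item If $Z$ is a blow-up of $\Pp(W\oplus 1)$ at a point, all Chern numbers of $Z$ are even. Mod $2$ they are Stiefel--Whitney numbers of $\mathbb{CP}^m\#\overline{\mathbb{CP}^m}$, which is null in unoriented cobordism. Hence all Chern numbers of $X_{\overline{k}}\simeq Z\times Z'$ are even too, being sums of products of Chern numbers of the factors.
\item If instead $Z=\Pp(W\oplus 1)$, then $R_{L/k}Z$ has a $G$-fixed point.
\item No choice of $Z$ rescues the approach when $d=4$. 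Indeed $(R_{L/k}Z)^G=\varnothing$ forces $Z^H=\varnothing$, and by \cite[(1.1.4)]{fpt} a surface with $Z^H=\varnothing$ has no odd Chern number.
\end{itemize}

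The paper's fix is to Weil-restrict first and blow up afterwards. By (\ref{lemm:semilinear}), $V$ itself becomes a free $L$-module of rank $d/2$ with $\tau$-semilinear $G$-action. Set $Y=R_{L/k}P$ with $P=\Pp_{\Spec L}(V\oplus 1)$. Then $Y^H=Y^G$ is a single rational point $y$, with $\Tan_{Y,y}\simeq V$ as $k[G]$-modules. Only then blow up $y$, using (\ref{lemm:fixed_blowup}). Geometrically the result is the blow-up of $\Pp^{d/2}\times\Pp^{d/2}$ at a point, which does have an odd Chern number \cite[(4.5)]{fpt}. The case $d_2(G)=2$ works the same way: blow up the unique fixed point of $\Pp(V\oplus 1)\times\Pp(V\oplus 1)$, whose tangent space $V\oplus V$ has no $[G,G]$-invariants.
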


We will use the following device to eliminate fixed points.
\begin{lemma}
	\label{lemm:fixed_blowup}
	Let \(X\) be a smooth \(k\)-variety with an action of \(G\).
	If \(X^G\) is a single rational point \(x\) and \((\Tan_{X,x})^{[G,G]}=0\), then the blow-up of \(X\) at \(x\) is fixed-point-free.
\end{lemma}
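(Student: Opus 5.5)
Let \(\pi\colon \tilde X \to X\) be the blow-up at \(x\), with exceptional divisor \(E=\Pp(\Tan_{X,x})\). The action lifts to \(\tilde X\) and \(\pi\) is \(G\)-equivariant, so \(\pi\) maps \(\tilde X^G\) into \(X^G=\{x\}\). Outside \(E\), the map \(\pi\) is a \(G\)-equivariant isomorphism onto \(X\smallsetminus\{x\}\). That open set has empty fixed locus, because \(X^G\cap (X\smallsetminus\{x\})=(X\smallsetminus \{x\})^G\) as schemes. Hence \(\tilde X^G\subset E\) set-theoretically. Moreover \(\tilde X^G\cap E = E^G\) scheme-theoretically, since \(E\) is a \(G\)-invariant closed subscheme. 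It therefore suffices to show that \(E^G=\varnothing\); we may even check this after base change to \(\overline{k}\) to get an empty underlying set.

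Next I identify the \(G\)-action on \(E\). Since \(x\) is a rational fixed point, \(G\) acts \(k\)-linearly on the cotangent space \(\mathfrak m_x/\mathfrak m_x^2\). The exceptional divisor is \(\Pp\) of the graded ring \(\bigoplus \mathfrak m_x^n/\mathfrak m_x^{n+1}=\mathrm{Sym}(\mathfrak m_x/\mathfrak m_x^2)\), equivariantly. So \(E\) is the projective space of lines in \(\Tan_{X,x}\), with its induced linear action; in the paper's notation \(E=\Pp_k(V)\) with \(V=\Tan_{X,x}\) (up to the dualization convention, which does not matter here). The only care needed is that the action on the blow-up is the one induced by functoriality of blow-ups along invariant centers. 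This is standard, since \(\pi^{-1}\) of the invariant ideal \(\mathfrak m_x\) is invariant.

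Then I apply (\ref{lemm:fixed_proj}) with \(A=k\): \(\Pp_k(V)^G=\Pp_k(V^{[G,G]})^G\). If the convention gives \(E=\Pp(V^\vee)\) instead, I note that \((V^\vee)^{[G,G]}=0\) if and only if \(V^{[G,G]}=0\). This follows from (\ref{p:ab_dim_one}), because one-dimensional submodules of \(V^\vee_{\overline k}\) correspond to one-dimensional quotients of \(V_{\overline k}\), hence by semisimplicity to one-dimensional submodules. The hypothesis \((\Tan_{X,x})^{[G,G]}=0\) then gives \(\Pp(0)=\varnothing\), so \(E^G=\varnothing\) and \(\tilde X^G=\varnothing\).

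The main obstacle is the scheme-theoretic bookkeeping: showing that \(\tilde X^G\) is contained in \(E\) as schemes, not merely as sets. This is handled by noting that emptiness is a set-theoretic property: \(\tilde X^G\) is a closed subscheme whose points lie over \(x\), hence in \(E\), and any such point is a point of \(E^G=\varnothing\).
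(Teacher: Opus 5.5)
Your proposal is correct and follows the paper's proof: the fixed locus of the blow-up lies in the exceptional divisor \(\Pp(\Tan_{X,x})\) because its complement is \(G\)-equivariantly isomorphic to \(X\smallsetminus X^G\), and (\ref{lemm:fixed_proj}) then gives \(\Pp(\Tan_{X,x})^G=\Pp(0)^G=\varnothing\). Your scheme-theoretic bookkeeping is sound, and your dualization hedge is valid but unneeded: in the paper's convention \(\Pp(V)\) parametrizes lines in \(V\), and \(\mathrm{Proj}\,\mathrm{Sym}(\mathfrak m_x/\mathfrak m_x^2)\) is exactly the space of lines in \(\Tan_{X,x}\).
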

\begin{proof}
	The fixed locus of the blow-up is contained in the \(k\)-scheme \(\Pp(\Tan_{X,x})\), since the complement of the latter is \(G\)-equivariantly isomorphic to \(X\smallsetminus x=X\smallsetminus X^G\).
	By (\ref{lemm:fixed_proj}) we have \(\Pp(\Tan_{X,x})^G=\Pp(0)^G=\varnothing\).
\end{proof}

\noindent \emph{Setup for the proof of (\ref{prop:sharp_bound}).}
It is proved in \cite[(1.1.4)]{fpt} that every smooth projective \(k\)-variety of dimension \(<4\) with an action of a constant \(2\)-group and an odd Chern number has a nonempty fixed locus.
Combining this observation with (\ref{cor:fpt}) (recall that \(d_p(G)=d_{(p)}(G)\) by (\ref{p:d_p_descend})), we see that it will suffice to assume that \(d_p(G) \ne \infty\), and construct a smooth projective \(k\)-variety of dimension \(d_p(G)\), resp.\ \(4\) when \(p=2\) and \(d_2(G)=2\), admitting a fixed-point-free \(G\)-action and having a Chern number prime to \(p\).

By (\ref{prop:d_p}), there exists a \(k[G]\)-module \(V\) of dimension \(d_p(G)\) on which \(G\) acts noncommutatively.
By minimality of its dimension \(V\) is irreducible.
In addition the irreducible summands of the \(\overline{k}[G]\)-module \(V_{\overline{k}}\) all have the same dimension \(w\), see (\ref{p:V_irred}).
If \(w=1\), then \(G\) acts commutatively on \(V \subset V_{\overline{k}}\), contradicting the assumption.
Therefore no summand of the \(\overline{k}[G]\)-module \(V_{\overline{k}}\) has dimension \(1\).
Thus by (\ref{p:ab_dim_one}) we have \(V^{[G,G]}=0\).

\begin{proof}[Proof  of (\ref{prop:sharp_bound}) when \(p=2\) and \(d_2(G)=2\)]
	The group \(G\) acts on \(\Pp(V \oplus 1) \times_k\Pp(V \oplus 1)\) with fixed locus the single rational point \(\Pp(1) \times_k \Pp(1)\), by (\ref{lemm:fixed_proj}), since \(V^{[G,G]}=0\).
	The tangent space at this point is \(G\)-equivariantly isomorphic to \(V \oplus V\).
	Blowing up this point yields a smooth projective \(k\)-variety of dimension \(4\) with a fixed-point-free \(G\)-action, by (\ref{lemm:fixed_blowup}).
	In addition the blow-up has an odd Chern number, as computed in \cite[(4.5)]{fpt}.
\end{proof}

\begin{proof}[Proof  of (\ref{prop:sharp_bound}) when \(p \ne 2\)]
	The group \(G\) acts on \(\Pp(V\oplus 1)\) with fixed locus the single rational point \(\Pp(1)\).
	The tangent space at this point is \(G\)-equivariantly isomorphic to \(V\).
	Blowing up this point yields a smooth projective \(k\)-variety of dimension \(d_p(G)\) with a fixed-point-free \(G\)-action, by (\ref{lemm:fixed_blowup}).
	The blow-up has a Chern number prime to \(p\), as computed in \cite[(4.5)]{fpt}.
\end{proof}

\begin{proof}[Proof  of (\ref{prop:sharp_bound}) when \(p=2\) and \(d_2(G) > 2\)]
	We have \(d_2(G) \ge 4\) by (\ref{rem:d_p_power}).
	We apply (\ref{lemm:semilinear}) below and use its notation.
	We may view \(V\) as a \(G\)-equivariant vector bundle of rank \(d/2\) over \(B=\Spec L\).
	We consider the \(L\)- and \(k\)-schemes
	\[
		P= \Pp_B(V \oplus 1), \quad \text{ and } Y=R_{L/k} P.
	\]
	The \(k\)-scheme \(Y\) is smooth and projective by \cite[\S7.6, Proposition~5]{Neron}.

	Then the \(k\)-group \(G\) naturally acts on \(Y\);
	explicitly the action of an element \(g \in G(k)\) on an element \(y\in Y(S)=P(S\times_k B)\), for \(S\) a \(k\)-scheme, is the element of \(P(S\times_k B)\) given by the morphism of \(B\)-schemes
	\[
		S \times_k B \xrightarrow{ \id_S \times g^{-1}}S \times_k B \xrightarrow{y} P \xrightarrow{g} P.
	\]

	We apply (\ref{lemm:fixed_proj}) to the \(L[H]\)-module \(V \oplus 1\), and obtain that \(P^H=\Pp_B(1)\) (recall that \(V^{[H,H]}=0\); viewing \(V\) as a \(k[H]\)- or \(L[H]\)-module does not affect this fact).
	For a \(k\)-scheme \(S\), an \(S\)-point of \(Y^H\) is an \((S\times_k B)\)-point of \(P\) fixed by \(H\), the group \(H\) acting trivially on \(B\).
	It follows that \(Y^H=R_{L/k}(P^H)\).
	As \(R_{L/k}(B)=\Spec k\), we deduce that \(Y^H\) is the rational point \(y\) induced by \(B=\Pp_B(1) \to P\).
	Now \(Y^G \subset Y^H\).
	Since \(H\) is normal in \(G\), the closed subscheme \(Y^H\) is \(G\)-invariant;
	but \(Y^H=y\) and \(\Aut_k(y)=\Aut_k(\Spec k)=1\), hence \(G\) acts trivially on \(Y^H\), and therefore \(y=Y^G\).\\

	Via the dual numbers interpretation, we see that, for a \(k\)-scheme \(S\), an \(S\)-point of \(\Tan_{Y/k}\) is an \(S_L\)-point of \(\Tan_{P/L}\), giving a \(G\)-equivariant isomorphism of \(Y\)-schemes \(\Tan_{Y/k}\simeq R_{L/k}(\Tan_{P/L})\).
	Since \(\Tan_{P/L}|_{\Pp_B(1)}=V\), it follows that the tangent space of \(Y\) at \(y\) is \(G\)-equivariantly isomorphic to \(R_{L/k}(V)\) as a \(k\)-scheme (here we have used that the Weil restriction maps cartesian squares of \(L\)-schemes into cartesian squares of \(k\)-schemes, which is immediate from the functorial description).
	As the Weil restriction of a vector bundle over \(L\) is the underlying \(k\)-vector space, we have an isomorphism of \(k[G]\)-modules \(\Tan_{Y,y} \simeq V\).

	Let \(X\) be the blow-up of \(Y\) at the point \(y\), a smooth projective \(k\)-variety with a \(G\)-action.
	It is fixed-point-free by (\ref{lemm:fixed_blowup}), since \((\Tan_{Y,y})^{[G,G]} \simeq V^{[G,G]} \subset V^{[H,H]}=0\).

	Finally \(X_{\overline{k}}\) is the blow-up of \(\Pp^{d/2} \times \Pp^{d/2}\) at a rational point, and so, as explained in \cite[(4.5)]{fpt}, it has an odd Chern number.
	Its dimension is \(d\).
\end{proof}

\begin{lemma}
	\label{lemm:semilinear}
	Let \(G\) be a \(2\)-group.
	Let \(V\) be an irreducible \(k[G]\)-module such that \(V^{[G,G]}=0\).
	Set \(d=\dim_k V\), and assume that \(d\ge 4\).
	Then there exist
	\begin{itemize}
		\item a quadratic étale \(k\)-algebra \(L\),
		\item a group morphism \(\tau\colon G \to \Aut_k(L)\),
		\item an \(L\)-module structure on \(V\),
	\end{itemize}
	such that
	\begin{itemize}
		\item the \(L\)-module \(V\) is free of rank \(d/2\),
		\item the \(G\)-action on \(V\) is \(\tau\)-semilinear,
		\item letting \(H=\ker \tau\), we have \(V^{[H,H]}=0\).
	\end{itemize}
\end{lemma}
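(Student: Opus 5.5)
We split according to the invariants $s=[K:k]$ and $m$ attached to $V$ in (\ref{p:V_irred}), where $D=\End_{k[G]}(V)$ has center $K$. The idea is to find a quadratic étale $k$-algebra $L$ inside a commutant of $V$. In the first two cases $L$ will commute with all of $G$, so $\tau$ is trivial and $H=G$. In the third case $L$ will commute only with an index-$2$ subgroup $H$.

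\emph{Case $s\ge 2$.} Here $K/k$ is abelian of degree a power of $2$, so it contains a quadratic subfield $L$. The field $K$ is central in $D$, so it commutes with $G$. Hence $V$ is an $L$-vector space on which $G$ acts $L$-linearly. We take $\tau$ trivial, so $H=G$, and $V^{[H,H]}=V^{[G,G]}=0$ by hypothesis. Since $L$ is a field, $V$ is free over $L$ of rank $d/2$.

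\emph{Case $s=1$, $m\ge 2$.} Now $D$ is a central division $k$-algebra of index $m$, a power of $2$. I will invoke Roquette's theorem that Schur indices of $2$-groups in characteristic $\ne 2$ are at most $2$. This gives $m=2$, so $D$ is a quaternion algebra. Any noncentral element of $D$ generates a quadratic subfield $L\subset D$, which is separable since $\operatorname{char} k\ne 2$. As $L$ commutes with $G$, we conclude exactly as in the previous case. If one wishes to avoid Roquette, one could instead try a maximal subfield of $D$ and pass to a quadratic subextension. That is the step I am least sure of, and I expect it to be the main obstacle.

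\emph{Case $s=m=1$.} Then $V$ is absolutely irreducible of dimension $w=d\ge 4$. By (\ref{lemm:Clifford}) there is a normal subgroup $H\subset G$ of index $2$ with $V_{\overline{k}}=\Ind_H^G W$. Here $W$ and $gW$ (for $g\notin H$) are irreducible and nonisomorphic: otherwise, by Mackey, $\Ind_H^GW$ would not be irreducible.

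Set $L=\End_{k[H]}(V)$. Then $L\otimes_k\overline{k}=\End_{\overline{k}[H]}(W\oplus gW)=\overline{k}\times\overline{k}$, so $L$ is a quadratic étale $k$-algebra. The group $G$ acts on $L$ by conjugation, and this defines $\tau\colon G\to\Aut_k(L)$. The subgroup $H$ lies in $\ker\tau$. An element $g\notin H$ swaps the two idempotents, since it exchanges the isotypic components $W$ and $gW$, so $\ker\tau=H$. The relation $g(\ell v)=(g\ell g^{-1})(gv)$ is exactly $\tau$-semilinearity.

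For freeness, the two idempotent components of $V_{\overline{k}}$ have the same dimension $d/2$, so $V$ is locally free of constant rank $d/2$ over the semilocal ring $L$, hence free. For the last condition, $V_{\overline{k}}|_H=W\oplus gW$ with $\dim W=d/2\ge 2$ and $W$ irreducible. So $V_{\overline{k}}$ has no one-dimensional $\overline{k}[H]$-submodule, and (\ref{p:ab_dim_one}) applied to $H$ gives $V^{[H,H]}=0$.
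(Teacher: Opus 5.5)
Your proof is correct and follows the paper's argument essentially step by step. It uses the same split into $D\ne k$ (a quadratic subfield of $K$, or of a quaternion $D$ via Roquette, with $\tau$ trivial) and $D=k$ (via (\ref{lemm:Clifford}) and $L=\End_{k[H]}(V)$), with only cosmetic differences: you use Mackey's criterion instead of Frobenius reciprocity to get $W\not\simeq gW$, and you use the swapping of idempotents instead of $\End_{k[G]}(V)=k$ to get $\ker\tau=H$.

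One small caveat: Roquette's theorem is a characteristic-zero result, so in odd positive characteristic you should add, as the paper does, that $m=1$. This holds because $k[G]=k_0[G]\otimes_{k_0}k$ with $k_0$ finite, and finite division rings are commutative. The maximal-subfield alternative you mention is not needed.
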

\begin{proof}
	Let us use the notation and observations of (\ref{p:V_irred}).
	By Roquette's theorem \cite[(74.15)]{Curtis-Reiner-Methods-II}, the Schur index \(m\) of \(V\) is \(1\) or \(2\) when \(k\) has characteristic zero.
	When \(k\) has positive characteristic, we have \(m=1\).
	Indeed \(k[G]=k_0[G] \otimes_{k_0} k\) where \(k_0\) is a finite field, and since \(k_0[G]\) is a product of matrix algebras over finite fields, the \(k\)-algebra \(k[G]\) is a product of matrix algebras over étale \(k\)-algebras, and so \(D\) is commutative (see \cite[(74.9)]{Curtis-Reiner-Methods-II}).\\

	\emph{Case \(D\ne k\):}
	We first claim that the \(k\)-algebra \(D\) contains a quadratic field extension \(L/k\).
	This is true when \(s \ne 1\), i.e.\ \(K\ne k\), because \(K/k\) is Galois with automorphism group an abelian \(2\)-group and thus admits a separable quadratic subextension.
	Otherwise \(s=1\), i.e\ \(K=k\);
	as \(D\ne k\), we have \(m=2\), and \(D\) is a quaternion \(k\)-algebra.
	Since \(k\) has characteristic different from \(2\), the \(k\)-algebra \(D\) contains a separable quadratic extension of \(k\).

	We then define \(\tau\) as the trivial morphism.
	Since \(L\subset D=\End_{k[G]}(V)\), the \(D\)-module \(V\) is an \(L\)-vector space;
	its dimension equals \(d /2\).
	Since \(L\) is a commutative subring of \(D\), the \(G\)-action on \(V\) is \(L\)-linear, and thus \(\tau\)-semilinear for our trivial \(\tau\).
	Since \(H=G\), the hypothesis implies that \(V^{[H,H]}=0\).\\

	\emph{Case \(D = k\):} Then \(V\) is absolutely irreducible.
	By (\ref{lemm:Clifford}) there exists a normal subgroup \(H\) of index \(2\) in \(G\), and an irreducible \(\overline{k}[H]\)-module \(W\) such that \(V_{\overline{k}} =\Ind^G_HW\).
	Let \(L=\End_{k[H]}(V)\), and pick \(g\in G\smallsetminus H\).
	Then \(V_{\overline{k}} \simeq W \oplus gW\) as \(\overline{k}[H]\)-modules.
	In addition, by Frobenius reciprocity
	\[
		\End_{\overline{k}[G]}(V_{\overline{k}}) \simeq \Hom_{\overline{k}[H]}(W, V_{\overline{k}})
		\simeq \End_{\overline{k}[H]}(W) \oplus \Hom_{\overline{k}[H]}(W,gW).
	\]
	Since the left hand side is \(\overline{k}\), and \(\overline{k}\subset \End_{\overline{k}[H]}(W)\), we must have \(\Hom_{\overline{k}[H]}(W,gW)=0\).
	It follows that
	\[
		L\otimes_k \overline{k}=\End_{\overline{k}[H]}(V_{\overline{k}}) \simeq \End_{\overline{k}[H]}(W) \times \End_{\overline{k}[H]}(gW) \simeq \overline{k} \times \overline{k}.
	\]
	Therefore \(L\) is a quadratic étale \(k\)-algebra, naturally acting on \(V\).

	Since \(H\) is normal in \(G\), the group \(G\) acts by conjugation on \(\End_{k[H]}(V)=L\), thus defining \(\tau\), and making the \(G\)-action on \(V\) semilinear.
	If \(\tau\) were trivial, then the action of \(G\) on \(V\) would commute with the elements \(L\), that is, \(L \subset \End_{k[G]}(V)=k\), a contradiction.
	So \(\tau\) is surjective.
	Certainly \(H \subset \ker \tau\); since both subgroups have index \(2\) in \(G\), they coincide.
	In particular \(\sigma=\tau(g)\) is the nontrivial automorphism of \(L\).
	If \(e_1, e_2\) are the two idempotents of \(L\otimes_k \overline{k}\), then \(e_2=\sigma(e_1)\) and so \(g\) induces an isomorphism between \(e_1V_{\overline{k}}\) and \(e_2V_{\overline{k}}\).
	Therefore the \(L\otimes_k \overline{k}\)-module \(V_{\overline{k}}\) is free of rank \(\dim_{\overline{k}}(e_1V_{\overline{k}})=d/2\), and by descent the same is true for the \(L\)-module \(V\).
	We have seen that the irreducible summands of the \(\overline{k}[H]\)-module \(V_{\overline{k}}\) are \(W\) and \(gW\).
	Their dimension is \(d/2\ge 2\).
	By (\ref{p:ab_dim_one}) this implies that \(V^{[H,H]}=0\).
\end{proof}

\def\cprime{$'$}


\begin{thebibliography}{LMMR13}

\bibitem[BLR90]{Neron}
Siegfried Bosch, Werner L\"{u}tkebohmert, and Michel Raynaud.
\newblock {\em N\'{e}ron models}, volume~21 of {\em Ergebnisse der Mathematik und ihrer Grenzgebiete (3) [Results in Mathematics and Related Areas (3)]}.
\newblock Springer-Verlag, Berlin, 1990.

\bibitem[Bou12]{Bou-A-8}
N.~Bourbaki.
\newblock {\em \'{E}l\'ements de math\'ematique. {A}lg\`ebre. {C}hapitre 8. {M}odules et anneaux semi-simples}.
\newblock Springer, Berlin, 2012.
\newblock Second revised edition of the 1958 edition [MR0098114].

\bibitem[CR87]{Curtis-Reiner-Methods-II}
Charles~W. Curtis and Irving Reiner.
\newblock {\em Methods of representation theory. {V}ol. {II}}.
\newblock Pure and Applied Mathematics (New York). John Wiley \& Sons, Inc., New York, 1987.
\newblock With applications to finite groups and orders, A Wiley-Interscience Publication.

\bibitem[CR90]{Curtis-Reiner-Methods-I}
Charles~W. Curtis and Irving Reiner.
\newblock {\em Methods of representation theory. {V}ol. {I}}.
\newblock Wiley Classics Library. John Wiley \& Sons, Inc., New York, 1990.
\newblock With applications to finite groups and orders, Reprint of the 1981 original, A Wiley-Interscience Publication.

\bibitem[CR11]{Higherdirect}
Andre Chatzistamatiou and Kay R{\"u}lling.
\newblock Higher direct images of the structure sheaf in positive characteristic.
\newblock {\em Algebra Number Theory}, 5(6):693--775, 2011.

\bibitem[Deb01]{Debarre-book}
Olivier Debarre.
\newblock {\em Higher-dimensional algebraic geometry}.
\newblock Universitext. Springer-Verlag, New York, 2001.

\bibitem[DG11]{SGA3-1}
Michel Demazure and Alexandre Grothendieck.
\newblock {\em Sch\'emas en groupes ({SGA} 3, {T}ome {I}), {P}ropri\'et\'es g\'en\'erales des sch\'emas en groupes}.
\newblock Documents Math\'ematiques (Paris), 7. Soci\'et\'e Math\'ematique de France, Paris, 2011.
\newblock S{\'e}minaire de G{\'e}om{\'e}trie Alg{\'e}brique du Bois Marie 1962--64, revised and annotated edition of the 1970 French original.

\bibitem[DR15]{Duncan-Reichstein-versal}
Alexander Duncan and Zinovy Reichstein.
\newblock Versality of algebraic group actions and rational points on twisted varieties.
\newblock {\em J. Algebraic Geom.}, 24(3):499--530, 2015.
\newblock With an appendix containing a letter from J.-P. Serre.

\bibitem[EG98]{EG-Equ}
Dan Edidin and William Graham.
\newblock Equivariant intersection theory.
\newblock {\em Invent. Math.}, 131(3):595--634, 1998.

\bibitem[EKM08]{EKM}
Richard Elman, Nikita~A. Karpenko, and Alexander~S. Merkurjev.
\newblock {\em {The algebraic and geometric theory of quadratic forms.}}
\newblock Providence, RI: American Mathematical Society, 2008.

\bibitem[Flo08]{Forence-cyclic}
Mathieu Florence.
\newblock On the essential dimension of cyclic {$p$}-groups.
\newblock {\em Invent. Math.}, 171(1):175--189, 2008.

\bibitem[GL06]{GL-order}
Robert~M. Guralnick and Martin Lorenz.
\newblock Orders of finite groups of matrices.
\newblock In {\em Groups, rings and algebras}, volume 420 of {\em Contemp. Math.}, pages 141--161. Amer. Math. Soc., Providence, RI, 2006.

\bibitem[GLL13]{Index}
Ofer Gabber, Qing Liu, and Dino Lorenzini.
\newblock The index of an algebraic variety.
\newblock {\em Invent. Math.}, 192(3):567--626, 2013.

\bibitem[GMS03]{Garibaldi-Merkurjev-Serre}
Skip Garibaldi, Alexander Merkurjev, and Jean-Pierre Serre.
\newblock {\em Cohomological invariants in {G}alois cohomology}, volume~28 of {\em University Lecture Series}.
\newblock American Mathematical Society, Providence, RI, 2003.

\bibitem[Gro66]{ega-4-3}
Alexander Grothendieck.
\newblock \'{E}l\'ements de g\'eom\'etrie alg\'ebrique. {IV}. \'{E}tude locale des sch\'emas et des morphismes de sch\'emas. {III}.
\newblock {\em Inst. Hautes \'Etudes Sci. Publ. Math.}, (28):5--255, 1966.

\bibitem[Hau13]{invariants}
Olivier Haution.
\newblock Invariants of upper motives.
\newblock {\em Doc. Math.}, 18:1555--1572, 2013.

\bibitem[Hau19]{fpt}
Olivier Haution.
\newblock Fixed point theorems involving numerical invariants.
\newblock {\em Compos. Math.}, 155(2):260--288, 2019.

\bibitem[KM08]{KM-Ess}
Nikita~A. Karpenko and Alexander~S. Merkurjev.
\newblock Essential dimension of finite {$p$}-groups.
\newblock {\em Invent. Math.}, 172(3):491--508, 2008.

\bibitem[KZ26]{Kollar-Zhuang}
J\'{a}nos Koll\'{a}r and Ziquan Zhuang.
\newblock Essential dimension of isogenies.
\newblock {\em Pure Appl. Math. Q.}, 22(2):675--690, 2026.

\bibitem[LMMR13]{LMMR-tori}
Roland L\"{o}tscher, Mark MacDonald, Aurel Meyer, and Zinovy Reichstein.
\newblock Essential dimension of algebraic tori.
\newblock {\em J. Reine Angew. Math.}, 677:1--13, 2013.

\bibitem[Mer09]{Merkurjev-essential_contemp}
Alexander~S. Merkurjev.
\newblock Essential dimension.
\newblock In {\em Quadratic forms---algebra, arithmetic, and geometry}, volume 493 of {\em Contemp. Math.}, pages 299--325. Amer. Math. Soc., Providence, RI, 2009.

\bibitem[Mer16]{Merkurjev-Bourbaki}
Alexander~S. Merkurjev.
\newblock Essential dimension.
\newblock {\em Ast\'{e}risque}, (380):Exp. No. 1101, 423--448, 2016.

\bibitem[RY00]{Reichstein-Youssin}
Zinovy Reichstein and Boris Youssin.
\newblock Essential dimensions of algebraic groups and a resolution theorem for {$G$}-varieties.
\newblock {\em Canad. J. Math.}, 52(5):1018--1056, 2000.
\newblock With an appendix by J{\'a}nos Koll{\'a}r and Endre Szab{\'o}.

\bibitem[Ser07]{Serre-Gk}
Jean-Pierre Serre.
\newblock Bounds for the orders of the finite subgroups of ${G}(k)$.
\newblock In Meinolf Geck, Donna Testerman, and Jacques Th\'evenaz, editors, {\em Group representation theory}, pages 405--450. EPFL Press, Lausanne, 2007.
\newblock Also available as arXiv:1011.0346.

\bibitem[Ser09]{Serre-Minkowski}
Jean-Pierre Serre.
\newblock A {M}inkowski-style bound for the orders of the finite subgroups of the {C}remona group of rank 2 over an arbitrary field.
\newblock {\em Mosc. Math. J.}, 9(1):193--208, back matter, 2009.

\bibitem[{Sta}]{stacks}
The {Stacks Project Authors}.
\newblock {\em {S}tacks {P}roject}.
\newblock \url{http://stacks.math.columbia.edu}.

\bibitem[Tho86]{Thomason-comparison}
Robert~W. Thomason.
\newblock Comparison of equivariant algebraic and topological {$K$}-theory.
\newblock {\em Duke Math. J.}, 53(3):795--825, 1986.

\bibitem[Tot99]{Totaro-CHBG}
Burt Totaro.
\newblock The {C}how ring of a classifying space.
\newblock In {\em Algebraic {$K$}-theory ({S}eattle, {WA}, 1997)}, volume~67 of {\em Proc. Sympos. Pure Math.}, pages 249--281. Amer. Math. Soc., Providence, RI, 1999.

\bibitem[Xu20]{Xu-rank_bir}
Jinsong Xu.
\newblock A remark on the rank of finite $p$-groups of birational automorphisms.
\newblock {\em Comptes Rendus. Math\'ematique}, 358(7):827--829, 2020.

\end{thebibliography}
\end{document}